\numberwithin{equation}{section} 
\newcommand{\R}{\mathbb R}
\newcommand{\C}{\mathbb C}
\newcommand{\N}{\mathbb N}
\newcommand{\Z}{\mathbb Z}
\newcommand{\eps}{\varepsilon}
\newcommand{\abs}[1]{\left\vert #1 \right\vert}
\renewcommand{\Re}{\mathop{\mathfrak{Re}}}
\renewcommand{\Im}{\mathop{\mathfrak{Im}}}
\newtheorem{Theorem}{Theorem}[section]
\newtheorem{Corollary}[Theorem]{Corollary}
\newtheorem{Lemma}[Theorem]{Lemma}
\newtheorem{Proposition}[Theorem]{Proposition}
 \theoremstyle{definition}
 \newtheorem{remark}[Theorem]{Remark}
\definecolor{lilla}{rgb}{0.6, 0.4, 0.8}
\begin{document}

\title[Aharonov-Bohm operators with varying pole]{Estimates
  for eigenvalues of Aharonov-Bohm operators with varying poles and
  non-half-integer circulation}

\author{Laura Abatangelo}
\address{Laura Abatangelo 
\newline \indent Dipartimento di Matematica e Applicazioni, 
Università degli Studi di Milano-Bicocca,
\newline \indent  Via Cozzi 55, 20125 Milano, Italy.}
\email{laura.abatangelo@unimib.it}

\author{Veronica Felli}
\address{Veronica Felli 
\newline \indent Dipartimento di Scienza dei Materiali, Università
degli Studi di Milano-Bicocca,
\newline \indent Via Cozzi 55, 20125 Milano, Italy.}
\email{veronica.felli@unimib.it}

\author{Benedetta Noris}
\address{Benedetta Noris
\newline \indent LAMFA: Laboratoire Amiénois de Mathématique
Fondamentale et Appliquée,
\newline \indent
UPJV Université de Picardie Jules Verne, 33 rue Saint-Leu, 80039 Amiens, France.}
\email{benedetta.noris@u-picardie.fr}

\author{Manon Nys}
\address{Manon Nys 
\newline \indent Dipartimento di Matematica Giuseppe Peano, Università degli Studi di Torino, 
\newline \indent Via Carlo Alberto 10, 10123 Torino, Italy.}
\email{manonys@gmail.com}

\date{\today}

\begin{abstract}
  We study the behavior of eigenvalues of a magnetic Aharonov-Bohm
  operator with non-half-integer circulation and Dirichlet boundary
  conditions in a planar domain.  As the pole is moving in the
  interior of the domain, we estimate the rate of the eigenvalue
  variation in terms of the vanishing order of the limit eigenfunction
  at the limit pole. We also provide an accurate blow-up analysis for
  scaled eigenfunctions and prove a sharp estimate for their rate of
  convergence.
\end{abstract}

\maketitle

\section{Introduction and statement of the main results}\label{sec:introduction}

An infinitely long thin solenoid perpendicular to the plane $(x_1,x_2)$ at the point
$a=(a_1,a_2)\in\R^2$ produces a 
    point-like magnetic field as the radius of the solenoid goes to zero and
the magnetic flux remains constantly equal to
$\alpha\in \R\setminus\Z$. This  magnetic field is a
  $2\pi\alpha$-multiple of the Dirac delta at $a$  orthogonal to the
  plane $(x_1,x_2)$ and  is  generated by the Aharonov-Bohm vector
potential
\[
A_{a}(x) = \alpha \bigg( -\frac{x_2 - a_2}{(x_1 - a_1)^2 + (x_2 -
  a_2)^2} , \frac{x_1 - a_1}{(x_1 - a_1)^2 + (x_2 - a_2)^2} \bigg) ,
\quad x = (x_1,x_2) \in \R^2\setminus\{a\},
\]
see e.g. \cite{AdamiTeta1998,AB,MOR}.  We are interested in the
spectral properties of Schrödinger operators with Aharonov-Bohm vector
potentials, i.e.  of operators
\[
(i\nabla +A_{a})^2 := -\Delta + 2 i A_{a} \cdot \nabla + |A_{a}|^2.
\]
Since $\mathop{\rm curl} A_a\equiv 0$ in
$\R^2\setminus\{a\}$, the magnetic field is concentrated
at the pole $a$. If the circulation $\alpha$ is an integer number,
then the potential $A_a$ can be gauged away by a phase transformation
so that the operator $(i\nabla +A_{a})^2$ becomes spectrally
equivalent to the standard Laplacian. On the other hand, if
$\alpha\not\in\Z$, the vector potential $A_{a}$ cannot be eliminated
by gauge transformations and the spectrum of the operator is modified 
by the presence of the magnetic field: this produces the so-called 
Aharonov-Bohm effect, i.e. the magnetic potential affects charged quantum particles moving in the 
region $\Omega\setminus\{a\}$, even if the magnetic field
$B_a=\mathop{\rm curl} A_a$ is zero there.

The dependence on the pole $a$ of the spectrum of the Schr\"odinger 
operator $(i\nabla +A_{a})^2$ in a bounded domain $\Omega$ was investigated 
in \cite{AbatangeloFelli2015-1,AbatangeloFelli2015-2,AbatangeloFelliNorisNys2016,
BonnaillieNorisNysTerracini2014,NorisNysTerracini2015,NorisTerracini2010} 
under homogeneous Dirichlet boundary conditions. In particular, in 
\cite{AbatangeloFelli2015-1,AbatangeloFelli2015-2} sharp asymptotic 
estimates for eigenvalues were given in the case of half-integer circulation 
$\alpha \in \Z+\frac{1}{2}$ as the pole $a$ moves towards a fixed point 
$\bar a\in\Omega$; analogous sharp estimates were derived in 
\cite{AbatangeloFelliNorisNys2016} in the case $\bar a\in\partial\Omega$.

The case $\alpha\in\Z+\frac{1}{2}$ studied in the aforementioned
papers presents several peculiarities which allow approaching the
problem with a perspective and a technique which are not completely
adaptable to a general circulation $\alpha\in\R\setminus\Z$. Indeed,
if $\alpha\in\Z+\frac{1}{2}$ the problem can be reduced by gauge
transformation to the case $\alpha=\frac12$ and, in this case, the
eigenfunctions of $(i\nabla +A_{a})^2$ can be identified, up a complex
phase, with the antisymmetric eigenfunctions of the Laplace Beltrami
operator on the twofold covering manifold of $\Omega$, see
\cite{HHHO1999,NorisTerracini2010}. As a consequence, if
$\alpha=\frac12$, the magnetic eigenfunctions have an odd number of
nodal lines ending at the pole $a$. It has been proved in
\cite{HelfferHO2013} that the corresponding nodal domains are related
to optimal partition problems. We refer to \cite{BHH} and references
therein for related numerical simulations.

The special features characterizing Aharonov-Bohm operators with circulation 
$\frac12$ played a crucial role in \cite{AbatangeloFelli2015-1,AbatangeloFelli2015-2,
AbatangeloFelliNorisNys2016,BonnaillieNorisNysTerracini2014,NorisNysTerracini2015,
NorisTerracini2010}. In particular, in \cite{NorisNysTerracini2015} local energy 
estimates for eigenfunctions near the limit pole are performed by studying an 
Almgren type quotient (see \cite{Almgren1983}), which is estimated using a 
representation formula by Green's functions for solutions to the corresponding 
Laplace problem on the twofold covering. Moreover, in \cite{AbatangeloFelli2015-1,
AbatangeloFelli2015-2,AbatangeloFelliNorisNys2016} a limit profile vanishing on 
the special directions determined by the nodal lines of limit eigenfunctions is 
constructed: this allows establishing a sharp relation between the asymptotics 
of the eigenvalue function and the number of nodal lines, which is strongly related 
to the order of vanishing of the limit eigenfunction. 

In this paper we will focus on the case of non-integer and non-half-integer 
circulation, i.e. we will assume $\alpha \in \R\setminus \frac\Z2$. 
A reduction to the Laplacian on the twofold 
covering manifold is no more available in this case; moreover, magnetic eigenfunctions vanish 
at the pole $a$ but they do not have nodal lines ending at
$a$ (see Proposition \ref{prop:fft}).  
The lack of the special features of Aharonov-Bohm operators with half-integer 
circulation described above requires alternative methods and produces a less 
precise estimate. In particular,  in order to estimate the Almgren frequency 
function,  
we will give a detailed description of the behaviour of eigenfunctions 
at the pole  and we will study  the dependence of the coefficients 
of their asymptotic expansion 
with respect to the moving pole $a$, see Lemma \ref{lemma:stimbe}.

By gauge invariance, if $\alpha\in \R\setminus \frac\Z2$ it is not restrictive to 
assume that 
\begin{equation}\label{eq:alpha}
\alpha \in(0,1)\setminus\Big\{\frac12\Big\}.
\end{equation}
Let $\Omega\subset\R^2$ be a bounded, open and simply connected domain. For 
every $a \in \Omega$, we introduce the functional space $H^{1 ,a}(\Omega,\C)$ 
as the completion of
\[
\{ u \in H^1(\Omega,\C) \cap C^\infty(\Omega,\C)  : \, u \text{ vanishes in 
a neighborhood of } a \}
\] 
with respect to the norm 
\begin{equation}\label{eq:norm}
\| u \|_{ H^{1,a}(\Omega,\C) } = 
\left( \left\| \nabla u \right\|^2_{L^2(\Omega,\C^2)} 
+ \| u \|^2_{ L^2(\Omega,\C) } 
+ \Big\| \frac{u}{|x-a|} \Big\|^2_{ L^2(\Omega,\C) } \right)^{\!\!1/2}.
\end{equation}
The norm \eqref{eq:norm} is equivalent, under condition \eqref{eq:alpha}, to the norm
\begin{equation*}
\left( \left\| ( i \nabla + A_a ) u \right\|^2_{ L^2(\Omega,\C^2) } 
+ \| u \|^2_{ L^2(\Omega,\C) } \right)^{\!\!1/2},
\end{equation*}
in view of the Hardy type inequality proved in \cite{LaptevWeidl1999} 
(see also \cite{Alziary2003} and \cite[Lemma 3.1 and Remark 3.2]{FelliFerreroTerracini2011})
\begin{equation}\label{eq:hardy}
\int_{ D_r(a) } | (i\nabla+A_a) u |^2 \,dx \geq \Big( \min_{j \in \Z} |j-\alpha| \Big)^2 
\int_{ D_r(a) } \frac{ |u(x)|^2 }{|x-a|^2} \, dx,
\end{equation}
which holds for all $r > 0$, $a \in \R^2$  and $u \in H^{1,a}(D_r(a),\C)$. 
Here we denote as $D_r(a)$ the disk of center $a$ and radius $r$; we will 
denote as $D_r := D_r(0)$ the disk with radius $r$ centered at the origin.

It is also worth mentioning the following formulation
  of the magnetic Hardy inequality proved in \cite[Lemma
  4.1]{Alziary2003}:
 for all $r_1>r_2 > 0$, $a \in \R^2$,  and $u \in
 H^{1,a}(D_{r_1}(a)\setminus D_{r_2}(a),\C)$, 
\begin{equation}\label{eq:anello}
\int_{ D_{r_1}(a)\setminus D_{r_2}(a) } | (i\nabla+A_a) u |^2 \,dx \geq \Big( \min_{j \in \Z} |j-\alpha| \Big)^2 
\int_{ D_{r_1}(a)\setminus D_{r_2}(a)} \frac{ |u(x)|^2 }{|x-a|^2} \, dx.
\end{equation}

We also consider the space $H^{1 ,a}_{0}(\Omega,\C)$ as the completion of 
$C^\infty_{\rm c}(\Omega\setminus\{a\},\C)$ with respect to the norm 
$\| \cdot \|_{H^{1}_{a}(\Omega,\C)}$, so that 
\[
H^{1,a}_{0}(\Omega,\C) = 
\left\{ u \in H^1_0(\Omega,\C) \, : \, \frac{u}{|x-a|} \in L^2(\Omega,\C) \right\}.
\]
From classical spectral theory, for every $a\in\Omega$, the eigenvalue problem
\begin{equation} \label{eq:eige_equation_a} \tag{$E_a$}
\begin{cases}
(i\nabla + A_{a})^2 u = \lambda u,  & \text{in } \Omega,\\
u = 0, & \text{on } \partial \Omega
\end{cases}
\end{equation}
admits a diverging sequence of real eigenvalues $\{\lambda_k^a\}_{k\geq 1}$
with finite multiplicity; in the enumeration
\[
\lambda_1^a \leq \lambda_2^a \leq \dots \leq \lambda_j^a \leq \dots
\]
we repeat each eigenvalue as many times as its multiplicity.  We are interested 
in the behavior of the function $a\mapsto \lambda_j^a$ in a neighborhood of a 
fixed point $\bar a\in\Omega$. Up to a translation and a dilation, it is not restrictive to assume 
that $\bar a= 0 \in \Omega$ and $\overline{D_2} \subset \Omega$.

Let us assume that there exists $n_0 \geq 1$ such that 
\begin{equation}\label{eq:1}
\lambda_{n_0}^0 \quad \text{ is simple},
\end{equation}
and denote 
\begin{equation*}
\lambda_0 = \lambda_{n_0}^0 \quad \text{and} \quad \lambda_a = \lambda_{n_0}^a
\end{equation*}
for any $a\in\Omega$. In \cite[Theorem 1.3]{Lena2015} it is proved that
\begin{equation}\label{eq:lena}
 \text{if $\lambda_j^0$ is simple, the function $a \mapsto \lambda_j^a$ 
is \emph{analytic} in a neighborhood of $0$}.
\end{equation}
In particular the function $a \mapsto \lambda_a$ is 
continuous and, if $a \to 0$, then $\lambda_a \to \lambda_0$ (see also 
\cite{BonnaillieNorisNysTerracini2014}). Let $\varphi_0 \in H^{1,0}_{0}(\Omega,\C) \setminus \{ 0 \}$ 
be a $L^2(\Omega,\C)$-normalized eigenfunction of problem $(E_0)$ associated to 
the eigenvalue $\lambda_0 = \lambda_{n_0}^0$, i.e. satisfying
\begin{equation} \label{eq:equation_lambda0}
\begin{cases}
(i\nabla + A_0)^2 \varphi_0 = \lambda_0 \varphi_0,  &\text{in }\Omega,\\
\varphi_0 = 0, &\text{on }\partial \Omega,\\
\int_\Omega |\varphi_0(x)|^2\,dx=1.
\end{cases}
\end{equation}
From \cite[Theorem 1.3]{FelliFerreroTerracini2011} (see also  Proposition~\ref{prop:fft}) 
it is known that
\begin{equation}\label{eq:37}
\varphi_0 \text{ vanishes at } 0 \text{ with a vanishing order equal to } |\alpha-k| 
\text{ for some } k\in \Z,
\end{equation}
in the sense that there exist $k \in \Z$ and $\beta \in \C \setminus\{ 0 \}$ such that
\begin{equation}\label{eq:131}
r^{ -|\alpha-k| } \varphi_0( r (\cos t,\sin t) ) \to \beta \frac{e^{i k t}}{\sqrt{2\pi}} 
\quad \text{in } C^{1,\tau}([0,2\pi],\C)
\end{equation}
as $r \to 0^+$ for any $\tau\in (0,1)$. 

Our first result provides an estimate of the rate of convergence of $\lambda_0-\lambda_a$ 
in terms of  the order of vanishing of $\varphi_0$ at $0$; in particular we have that 
higher vanishing orders imply faster convergence of eigenvalues.

\begin{Theorem}\label{thm:eigenvalues}
  Let $\alpha \in(0,1)\setminus\big\{\frac12\big\}$ and
  $\Omega\subset\R^2$ be a bounded, open and simply connected 
  domain such that $0 \in \Omega$.  Let $n_0 \in \mathbb{N}$ be such
  that the $n_0$-th eigenvalue $\lambda_{n_0}^0=\lambda_0$ of problem
  $(E_0)$ is simple and let
  $\varphi_0 \in H^{1,0}_0(\Omega,\C)$ be an
  associated eigenfunction satisfying \eqref{eq:equation_lambda0}. Let
  $k \in \mathbb{Z}$ be such that $|\alpha - k|$ is the order of
  vanishing of $\varphi_0$ at $0$ as in \eqref{eq:131}. For
  $a \in \Omega$, let $\lambda_{n_0}^a=\lambda_a$ be the $n_0$-th
  eigenvalue of problem \eqref{eq:eige_equation_a}.  Then
\[
  | \lambda_a - \lambda_0 | 
  = O \left( |a|^{ 1 + \big\lfloor 2|\alpha-k|\big\rfloor} \right) 
  \quad \text{ as } |a| \to 0,
\]
where $\lfloor\cdot\rfloor$ denotes the floor function $\lfloor
t\rfloor:=\max\{k\in\Z:k\leq t\}$.
\end{Theorem}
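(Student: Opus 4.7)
The plan is to bound $|\lambda_a-\lambda_0|$ from above and below through the variational min--max characterisation of the simple eigenvalue $\lambda_0$. The upper bound $\lambda_a\le\lambda_0+O\big(|a|^{1+\lfloor2|\alpha-k|\rfloor}\big)$ would follow by producing an admissible test function for $(E_a)$ built from the known eigenfunction $\varphi_0$; the matching lower bound is obtained symmetrically, using an eigenfunction $\varphi_a$ of $(E_a)$ -- which by the analyticity statement~\eqref{eq:lena} can be chosen continuously in $a$ -- as a competitor for $(E_0)$.

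The competitor for $(E_a)$ is constructed via a gauge transformation: since $\mathrm{curl}(A_0-A_a)\equiv 0$ on $\R^2\setminus\{0,a\}$, after cutting $\Omega$ along a segment joining $0$ to $a$ there exists a smooth real phase $\psi_a$ with $\nabla\psi_a=A_0-A_a$, so that $(i\nabla+A_a)(e^{-i\psi_a}w)=e^{-i\psi_a}(i\nabla+A_0)w$. Fixing a radial cut-off $\chi_a$ equal to $0$ on $D_{r|a|}(a)$ and to $1$ outside $D_{2r|a|}(a)$, I would set $v_a:=\chi_a\,e^{-i\psi_a}\varphi_0\in H^{1,a}_0(\Omega,\C)$. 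Expanding $\int_\Omega|(i\nabla+A_a)v_a|^2\,dx-\lambda_0\int_\Omega|v_a|^2\,dx$ and invoking $(i\nabla+A_0)^2\varphi_0=\lambda_0\varphi_0$ reduces the excess to three integrals supported in the annulus $\{r|a|<|x-a|<2r|a|\}$: a quadratic term $\int|\nabla\chi_a|^2|\varphi_0|^2\,dx$, a cross term pairing $\nabla\chi_a$ with $(i\nabla+A_0)\varphi_0$, and an $L^2$-correction $-\lambda_0\int(\chi_a^2-1)|\varphi_0|^2\,dx$.

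Using the pointwise asymptotics $|\varphi_0(x)|\lesssim |x|^{|\alpha-k|}$ and $|\nabla\varphi_0(x)|\lesssim |x|^{|\alpha-k|-1}$ from \eqref{eq:131}, together with $|\nabla\chi_a|\lesssim|a|^{-1}$ and an area factor of order $|a|^2$, readily yields the naive bound $O(|a|^{2|\alpha-k|})$, which is weaker than the target. To reach $O(|a|^{1+\lfloor 2|\alpha-k|\rfloor})$ the idea is to refine $v_a$ by replacing $\varphi_0$ in a neighborhood of $a$ with a finite combination of magnetic Fourier modes centred at the moving pole $a$, up to angular order $\lfloor 2|\alpha-k|\rfloor$: Proposition~\ref{prop:fft} provides such an expansion, while Lemma~\ref{lemma:stimbe} supplies the quantitative dependence of the expansion coefficients on $a$ as $a\to 0$. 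Orthogonality of distinct angular modes of the form $e^{i(j-\alpha)\theta_a}$ on circles around $a$ then cancels the cross contributions that would otherwise dominate, and only the genuinely higher-order remainder survives.

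The main obstacle is precisely this last cancellation: the natural angular decomposition of $\varphi_0$ is centred at $0$, whereas the Rayleigh-quotient error lives near $a$, so one must transfer the expansion from pole $0$ to pole $a$ with uniform quantitative control of the coefficients -- exactly what Lemma~\ref{lemma:stimbe} is designed to provide. The remaining ingredients (the gauge construction of $\psi_a$, the min--max comparison, and the continuity of $\varphi_a$ in $a$) are standard; summing the surviving contributions of the refined test function in the three error terms above then yields the sharp estimate.
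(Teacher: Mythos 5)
Your overall variational framework (testing $(E_a)$ with a gauged, cut-off copy of $\varphi_0$ and, symmetrically, testing $(E_0)$ with one built from $\varphi_a$) is the right starting point and matches Section~\ref{sec:estim-texorpdfstr-} of the paper, but the proposal has a decisive gap at the step where you try to upgrade the exponent. The refinement you propose --- replacing $\varphi_0$ near $a$ by finitely many magnetic Fourier modes centred at $a$ and invoking orthogonality of the angular factors --- only removes \emph{cross} terms between distinct modes. The diagonal contribution of the leading mode $j=k$ to the Rayleigh-quotient excess (the magnetic energy of $\varphi_0$, and of any competitor agreeing with it on $\partial D_{R|a|}(a)$, inside a disk of radius comparable to $|a|$) is genuinely of size $|a|^{2|\alpha-k|}$ and does not cancel; this is exactly what Lemma~\ref{lemma:lower-estimate} computes ($p^a_{n_0,n_0}=|a|^{2|\alpha-k|}O(1)$, and no better). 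Since $2|\alpha-k|<1+\lfloor 2|\alpha-k|\rfloor$, no test-function construction of this type can reach the stated exponent directly. The missing idea is the one announced after \eqref{eq:4}: one proves only $|\lambda_0-\lambda_a|=O(|a|^{2|\alpha-k|})$ along each ray $a=|a|p$, and then uses the \emph{analyticity} of $a\mapsto\lambda_a$ from \eqref{eq:lena}, together with the fact that $2|\alpha-k|\notin\Z$, to conclude that every homogeneous Taylor term of $\lambda_0-\lambda_a$ of degree $\le\lfloor 2|\alpha-k|\rfloor$ must vanish (a nonzero such term would dominate $|a|^{2|\alpha-k|}$ along some direction), which is what produces the exponent $1+\lfloor 2|\alpha-k|\rfloor$. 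In your write-up analyticity is invoked only to choose $\varphi_a$ continuously, not for this purpose.

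A second, more technical gap: the ``symmetric'' direction, bounding $\lambda_0-\lambda_a$ from above by testing $(E_0)$ with a competitor built from $\varphi_a$, is not symmetric at all. There the local energy of $\varphi_a$ near the moving pole enters through $H(\varphi_a,K_\delta|a|)$, and identifying this quantity as being of order exactly $|a|^{2|\alpha-k|}$ requires uniform-in-$a$ control of $\varphi_a$ near $a$ (the Almgren monotonicity formula of Section~\ref{sec:monotonicity}, which in turn rests on Lemmas~\ref{lemma:stimbe}--\ref{lemma:lunette}) and the blow-up analysis of Section~\ref{sec:blow-up-analysis} (Theorem~\ref{thm:energy_estimates}, Lemma~\ref{lemma:convergence-blow-up}, Corollary~\ref{cor:improved}). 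Citing Lemma~\ref{lemma:stimbe} alone does not supply this: that lemma controls the Fourier coefficients of $\varphi_n^a$ around $a$, but it does not by itself determine the rate at which $H(\varphi_a,K_\delta|a|)$ vanishes, which is the quantity your lower bound would ultimately depend on.
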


To prove Theorem \ref{thm:eigenvalues}, we will study the
quotient 
\begin{equation}\label{eq:4}
\frac{ \lambda_0 - \lambda_a }{ |a|^{2 |\alpha - k|} }
\end{equation}
as $a$ approaches the origin along a straight line $\{t p \, : \, t>0\}$ 
for any direction
\[
p \in \mathbb{S}^1:=\{x\in\R^2: \ |x|=1\}.
\]
We will prove that, for every $p \in \mathbb{S}^1$, the quotient 
\eqref{eq:4} is bounded as $a=|a| p\to 0$. Then 
  \eqref{eq:lena} and the fact that $2 |\alpha - k|$ 
is non-integer imply that  the Taylor polynomials of the function $\lambda_0 - \lambda_a$ 
with center $0$ and degree less or equal than $\lfloor 2|\alpha-k|\rfloor$ vanish, 
thus yielding the conclusion of Theorem \ref{thm:eigenvalues}.

In the case of half-integer
circulation $\alpha=\frac12$ the special nodal structure of the limit
problem allows proving instead that 
the limit 
\[
\lim_{a=|a|p\to 0}\frac{ \lambda_0 - \lambda_a }{ |a|^{2 |\alpha - k|} }
=\lim_{a=|a|p\to 0}\frac{ \lambda_0 - \lambda_a }{ |a|^{|1 -2 k|} }
\]
is different from $0$ along some special directions 
$p$ corresponding to tangents to the nodal lines of the
limit eigenfunction.
As a consequence, the leading term of the Taylor expansion of the eigenvalue 
variation $\lambda_0 - \lambda_a$ has order exactly $|1-2k|$, i.e. 
\begin{equation*}
\lambda_0 - \lambda_a = P(a) + o(|a|^{|1 -2 k|}), \quad \text{ as } |a| \to 0^+,
\end{equation*}
for some homogeneous polynomial $P\not\equiv 0$ of degree $|1 -2 k|$, see 
\cite[Theorem 1.2]{AbatangeloFelli2015-1}. In \cite[Theorem 2]{AbatangeloFelli2015-2} 
the  exact value of all coefficients of the polynomial $P$ is determined proving 
that $P(|a|(\cos t,\sin t))=C_0|a|^{|1 -2 k|}\cos(|1 -2 k| (t-t_0))$ for some $t_0$ and $C_0>0$. 
In particular the leading polynomial $P$ is  harmonic.

In this paper we will also describe the behaviour of the eigenfunctions as $a\to0$, proving a blow-up 
result for scaled eigenfunctions and giving a sharp rate of the convergence to the 
limit eigenfunction $\varphi_0$. In order to state these results more precisely, 
we need to introduce some notations.

For every $b=(b_1,b_2)=|b|(\cos\vartheta,\sin\vartheta)\in \R^2\setminus\{0\}$ 
with $\vartheta\in[0,2\pi)$, we define the polar angle centered at $b$, 
$\theta_b:\R^2\setminus\{b\}\to [\vartheta,\vartheta+2\pi)$ as
\begin{equation} \label{eq:theta-b}
\theta_b(b+r(\cos t,\sin t))=t
\quad\text{for all }r>0\text{ and }t\in [\vartheta,\vartheta+2\pi),
\end{equation}
and the function $\theta_0^b:\R^2\setminus\{0\}\to [\vartheta,\vartheta+2\pi)$ as
\begin{equation}\label{eq:theta_0^b}
\theta_0^b (r (\cos t,\sin t)) = t \quad \text{ for all } r > 0 
\text{ and } t \in  [\vartheta, \vartheta + 2\pi),
\end{equation}
in such a way that the difference function $\theta_0^b - \theta_b$ is regular 
except for the segment 
\begin{equation} \label{eq:gamma-b}
\Gamma_b := \{ tb \, : \, t \in [0,1] \}.
\end{equation}
For all $a \in \Omega$, let $\varphi_a \in H^{1,a}_{0}(\Omega,\C)\setminus\{0\}$ 
be an eigenfunction of problem \eqref{eq:eige_equation_a} associated to the eigenvalue 
$\lambda_a$, i.e. solving
\begin{equation}\label{eq:equation_a}
\begin{cases}
(i\nabla + A_a)^2 \varphi_a = \lambda_a \varphi_a,  &\text{in }\Omega,\\
\varphi_a = 0, &\text{on }\partial \Omega,
\end{cases}
\end{equation}
such that the following normalization conditions hold
\begin{equation}\label{eq:normalization}
\int_\Omega |\varphi_a(x)|^2 \,dx = 1 \quad \text{ and } \quad 
\int_\Omega e^{i \alpha (\theta_0^a-\theta_a)(x)} \varphi_a(x) \overline{\varphi_0(x)} \, dx 
\text{ is a positive real number}.
\end{equation}
Using \eqref{eq:1}, 
\eqref{eq:equation_lambda0}, \eqref{eq:equation_a}, \eqref{eq:normalization}, and 
standard elliptic estimates (see e.g. \cite[Theorem 8.10]{gilbarg-trudinger}), it is easy to prove that
\begin{equation}\label{eq:convergence-varphi-a-1}
\varphi_a \to \varphi_0 \quad \text {in } H^1(\Omega,\C) \text{ and in
} C^2_{\rm loc}(\Omega\setminus\{0\},\C), 
\end{equation}
and 
\begin{equation}\label{eq:convergence-varphi-a-2}
(i\nabla+A_a) \varphi_a \to (i\nabla+A_0) \varphi_0 
\quad \text{ in } L^2(\Omega,\C).
\end{equation}
To give a precise description of the behavior of the eigenfunction
$\varphi_a$ for $a$ close to $0$, we consider a homogeneous scaling of
order $|a|^{|\alpha-k|}$ of $\varphi_a$ along a fixed direction
 $p\in {\mathbb S}^1$. Theorem \ref{t:bu}
below gives the convergence of scaled eigenfunctions to a nontrivial
limit profile $\Psi_{p} \in H^{1 ,{p}}_{\rm loc}(\R^2,\C)$, which can
be characterized as the unique solution to the problem
\begin{equation}\label{eq:Psip1}
(i\nabla + A_{p})^2 \Psi_{p} = 0 
\quad \text{ in } \R^2 \text{ in a weak } H^{1,p} \text{-sense},
\end{equation}
satisfying \begin{equation}\label{eq:Psip2}
\int_{\R^2\setminus D_1} |(i\nabla + A_{p}) 
( \Psi_{p} - e^{ i \alpha ( \theta_{p} 
- \theta_0^{p}) } \psi_k)|^2 \, dx 
< + \infty ,
\end{equation}
where $\psi_k:\R^2\to\C$ is defined as 
\begin{equation}\label{eq:psi_j_definition}
\psi_k(r(\cos t,\sin t))=  r^{|\alpha-k|}  \frac{e^{i k t}}{\sqrt{2\pi}}.
\end{equation}
The existence and uniqueness of a limit profile satisfying \eqref{eq:Psip1}
and \eqref{eq:Psip2} will be proved in Lemma~\ref{lem:uniqueness}. 
We notice that the function $\psi_k$ in \eqref{eq:psi_j_definition} 
is the unique (up to a multiplicative constant) $H^{1 ,{0}}_{\rm loc}(\R^2,\C)$-solution to $(i\nabla+A_0)^2\psi_k=0$ 
in $\R^2$ which is homogeneous of degree $|\alpha-k|$.

\begin{Theorem}\label{t:bu}
  Under the same assumptions as in Theorem \ref{thm:eigenvalues}, for
  all $a \in \Omega$ let $\varphi_a \in H^{1,a}_{0}(\Omega,\C)$ be an
  eigenfunction of problem \eqref{eq:eige_equation_a} associated to
  the eigenvalue $\lambda_a$ and satisfying \eqref{eq:normalization}. Let
  $p\in {\mathbb S}^1$. Then
\[
  \frac{\varphi_a(|a|x)}{|a|^{|\alpha-k|}} \to \beta \Psi_{p}
  \quad \text{ as } a=|a|p\to 0,
\]
  in $H^{1,p}(D_R, \C)$ for every $R > 1$, almost everywhere in
  $\R^2$ and in $C^2_{\rm loc}(\R^2\setminus\{p \},\C)$, 
  with $\beta \neq 0$ and $k \in \Z$ being as in \eqref{eq:131} and $\Psi_{p}$ 
  being as in \eqref{eq:Psip1}--\eqref{eq:Psip2}.
\end{Theorem}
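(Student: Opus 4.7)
The strategy is a blow-up analysis. Define the scaled functions
\[
\tilde{\varphi}_a(x) := \frac{\varphi_a(|a|x)}{|a|^{|\alpha-k|}}, \qquad x \in \Omega_a := |a|^{-1}\Omega.
\]
Using the homogeneity identity $A_a(|a|x) = |a|^{-1} A_p(x)$, which holds whenever $a = |a|p$, a direct computation yields the scaled equation
\[
(i\nabla + A_p)^2 \tilde{\varphi}_a = |a|^2 \lambda_a \, \tilde{\varphi}_a \qquad \text{in } \Omega_a.
\]
Since $\Omega_a$ invades $\R^2$ and $|a|^2\lambda_a\to 0$ as $|a|\to 0$, any cluster point $\Psi$ of the family $\{\tilde{\varphi}_a\}$ must solve the free equation $(i\nabla+A_p)^2 \Psi = 0$ in $\R^2$.

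The first step is to establish uniform $H^{1,p}(D_R,\C)$ bounds for every fixed $R > 1$. A change of variables gives
\[
\int_{D_R(p)} |(i\nabla + A_p) \tilde{\varphi}_a|^2\,dx = |a|^{-2|\alpha-k|} \int_{D_{|a|R}(a)} |(i\nabla + A_a) \varphi_a|^2\,dy,
\]
reducing the claim to an energy bound of the form $\int_{D_r(a)} |(i\nabla+A_a)\varphi_a|^2\,dy \leq C r^{2|\alpha-k|}$ on small disks around the moving pole. This is exactly the output of the Almgren-type monotonicity argument developed in the paper together with the control on the vanishing order at the pole provided by Lemma \ref{lemma:stimbe}, both uniform in $a$ near $0$. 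The magnetic Hardy inequality \eqref{eq:hardy} then bounds $\|\tilde{\varphi}_a/|x-p|\|_{L^2(D_R(p))}$ as well.

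A diagonal extraction over an exhaustion by disks $D_R(p)$ with $R\to\infty$ produces a sequence $a_n = |a_n|p\to 0$ along which $\tilde{\varphi}_{a_n}\rightharpoonup\Psi$ weakly in $H^{1,p}(D_R,\C)$ for every $R>1$ and strongly in $L^2_{\rm loc}(\R^2,\C)$. Testing the scaled equation against $\chi_R^2(\tilde{\varphi}_{a_n}-\Psi)$, with $\chi_R$ a cut-off supported in $D_{2R}(p)$ and equal to $1$ on $D_R(p)$, and using that $|a_n|^2\lambda_{a_n}\to 0$, one upgrades weak to strong $H^{1,p}(D_R,\C)$-convergence. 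Standard interior elliptic regularity away from the pole then yields the $C^2_{\rm loc}(\R^2\setminus\{p\},\C)$-convergence.

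The main obstacle is the identification $\Psi = \beta\Psi_p$, which amounts to verifying the matching condition \eqref{eq:Psip2} with the precise coefficient $\beta$ from the expansion \eqref{eq:131} of $\varphi_0$ at $0$. The plan is to compare $\tilde{\varphi}_a$ with $\beta e^{i\alpha(\theta_p-\theta_0^p)(x)}\psi_k(x)$ on an intermediate annulus $1\leq |x|\leq \rho/|a|$, for a small fixed $\rho>0$. The gauge transformation $e^{i\alpha(\theta_0^a-\theta_a)}$ intertwines the operators $(i\nabla+A_a)^2$ and $(i\nabla+A_0)^2$ on $\Omega\setminus\Gamma_a$, so in original coordinates this comparison reduces to an estimate on $\varphi_a-\varphi_0$ on a region shrinking to $0$, which is controlled by the quantitative asymptotic analysis of Lemma \ref{lemma:stimbe} together with the sharp expansion \eqref{eq:131}. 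The outer tail $|x|\geq\rho/|a|$ is absorbed by the uniform energy bound established above. Passing to the limit along $a_n$, the function $\Psi - \beta e^{i\alpha(\theta_p-\theta_0^p)}\psi_k$ has finite magnetic energy on $\R^2\setminus D_1$, so condition \eqref{eq:Psip2} holds with coefficient $\beta$. Lemma \ref{lem:uniqueness} then forces $\Psi = \beta\Psi_p$, and since the limit does not depend on the extracted subsequence the full family converges as $a=|a|p\to 0$.
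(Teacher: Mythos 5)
Your overall architecture (scaling, compactness, identification of the limit via the matching condition \eqref{eq:Psip2} and the uniqueness Lemma \ref{lem:uniqueness}) matches the paper's, but there is a genuine gap at the very first step, and it propagates to the identification step. You normalize by $|a|^{|\alpha-k|}$ and claim that the uniform bound $\int_{D_{R|a|}(a)}|(i\nabla+A_a)\varphi_a|^2\,dx\le C\,(R|a|)^{2|\alpha-k|}$ is ``exactly the output'' of the Almgren monotonicity argument together with Lemma \ref{lemma:stimbe}. It is not. What the monotonicity formula yields (Lemma \ref{lem:phi_tild_bdd} and Corollary \ref{cor:Ha}) is a bound of this energy by $H(\varphi_a,K_\delta|a|)$, together with the one-sided estimates $H(\varphi_a,K_\delta|a|)\ge C_\delta|a|^{2(|\alpha-k|+\delta)}$ and $H(\varphi_a,K_\delta|a|)=O(|a|^{2(\sqrt{\mu_1}-\delta)})$; the upper bound $H(\varphi_a,K_\delta|a|)=O(|a|^{2|\alpha-k|})$ that your normalization requires is precisely the hard point. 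Concretely, Lemmas \ref{lemma:stimbe} and \ref{lemma:pre-lunette} give $\frac1r\int_{\partial D_r(a)}|\varphi_a|^2\,ds=\sum_j r^{2|\alpha-j|}|\beta_j^a|^2\big(1+O(r^2)\big)$ with $\beta_j^a\to\beta_j^0$ and $\beta_j^0=0$ for $|\alpha-j|<|\alpha-k|$, but no rate of vanishing of these lower-order coefficients is available from the local analysis; a term $|a|^{2|\alpha-j|}|\beta_j^a|^2$ with $|\alpha-j|<|\alpha-k|$ could a priori dominate $|a|^{2|\alpha-k|}$, in which case your family $\varphi_a(|a|\cdot)/|a|^{|\alpha-k|}$ would be unbounded in $H^{1,p}(D_R,\C)$. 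The same missing quantitative input reappears in your identification step: the comparison of $\varphi_a$ with $\varphi_0$ on the annulus $R|a|\le|x|\le\rho$ with energy error $O(|a|^{|\alpha-k|})$ is not ``controlled by Lemma \ref{lemma:stimbe} together with \eqref{eq:131}''; it is the content of Theorem \ref{thm:energy_estimates}, whose proof is not local at all --- it rests on the invertibility of $dF(\lambda_0,\varphi_0)$ (hence on the simplicity of $\lambda_0$) and on the eigenvalue estimates of Section \ref{sec:estim-texorpdfstr-}.

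The paper avoids this circularity by normalizing the blow-up with the intrinsic quantity $\sqrt{H(\varphi_a,K_\delta|a|)}$ as in \eqref{eq:tilde-varphi-a}, for which boundedness is automatic from the frequency bound; the ratio $c=\lim|a|^{|\alpha-k|}/\sqrt{H(\varphi_a,K_\delta|a|)}$ is shown to be nonzero only a posteriori, by combining the exterior estimate \eqref{eq:tilde_phi-W} with a contradiction argument (if $c=0$ the limit profile would have finite magnetic energy on all of $\R^2$, hence vanish, contradicting the trace normalization on $\partial D_{K_\delta}$), and \eqref{eq:HabigO} then converts the statement into the $|a|^{|\alpha-k|}$-normalized form of Theorem \ref{t:bu}. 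To repair your argument you would need either to import Theorem \ref{thm:energy_estimates} and Corollary \ref{cor:improved} explicitly as inputs, or to switch to the $\sqrt{H}$-normalization as in Lemma \ref{lemma:convergence-blow-up}.
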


Finally, we describe the sharp rate of convergence  \eqref{eq:convergence-varphi-a-2}, 
which also turns out to depend strongly on the order of vanishing of $\varphi_0$ 
at $0$, as stated in the following theorem. 

\begin{Theorem}\label{thm:eigenfunctions}
Under the same assumptions as in Theorems \ref{thm:eigenvalues} and \ref{t:bu}, for every 
$p\in{\mathbb S}^1$ there exists ${\mathfrak L}_{p}>0$ 
such that 
\begin{equation*}
|a|^{-2|\alpha-k|} \left\| (i\nabla+A_a) \varphi_a 
- e^{i\alpha(\theta_a-\theta_0^a)} (i\nabla+A_0) \varphi_0 \right\|^2_{L^2(\Omega,\C)}
\to |\beta|^2\,{\mathfrak L}_{p},
\quad \text{ as } a = |a| p \to0.
\end{equation*}

\end{Theorem}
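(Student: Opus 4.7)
The plan is to rescale by $|a|$ and exploit the blow-up result of Theorem~\ref{t:bu} to identify the limit as a single integral over $\R^2$. Setting $\tilde\varphi_a(x):=|a|^{-|\alpha-k|}\varphi_a(|a|x)$ and $\tilde\varphi_0(x):=|a|^{-|\alpha-k|}\varphi_0(|a|x)$ and using the scaling identities $A_a(|a|x)=|a|^{-1}A_{p}(x)$, $(i\nabla+A_a)\varphi_a(|a|x)=|a|^{|\alpha-k|-1}(i\nabla+A_{p})\tilde\varphi_a(x)$ (and its analogue for $\varphi_0$), together with the invariances $\theta_a(|a|x)=\theta_{p}(x)$ and $\theta_0^a(|a|x)=\theta_0^{p}(x)$, the change of variables $y=|a|x$ transforms the quantity in the theorem into
\[
\int_{\Omega/|a|}\left|(i\nabla+A_{p})\tilde\varphi_a - e^{i\alpha(\theta_{p}-\theta_0^{p})}(i\nabla+A_0)\tilde\varphi_0\right|^2\,dx,
\]
where $\Omega/|a|:=\{x\in\R^2:|a|x\in\Omega\}$ exhausts $\R^2$ as $|a|\to 0$. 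The candidate limit is
\[
{\mathfrak L}_{p}:=\int_{\R^2}\left|(i\nabla+A_{p})\Psi_{p}-e^{i\alpha(\theta_{p}-\theta_0^{p})}(i\nabla+A_0)\psi_k\right|^2\,dx,
\]
which is finite by \eqref{eq:Psip2} on $\R^2\setminus D_1$ and by the local $L^2$ integrability on $D_1$ of both $|(i\nabla+A_{p})\Psi_{p}|$ and $|(i\nabla+A_0)\psi_k|\sim r^{|\alpha-k|-1}$. Positivity ${\mathfrak L}_{p}>0$ will follow from the uniqueness in Lemma~\ref{lem:uniqueness}: if ${\mathfrak L}_{p}=0$ then $\Psi_{p}-e^{i\alpha(\theta_{p}-\theta_0^{p})}\psi_k$ is magnetically $(i\nabla+A_{p})$-flat on $\R^2\setminus\Gamma_{p}$, forcing $\Psi_{p}$ to inherit the jump of $e^{i\alpha(\theta_{p}-\theta_0^{p})}\psi_k$ across $\Gamma_{p}$, which is incompatible with $\Psi_{p}\in H^{1,p}_{\rm loc}(\R^2,\C)$ from \eqref{eq:Psip1}.

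For fixed $R>1$, Theorem~\ref{t:bu} gives $\tilde\varphi_a\to\beta\Psi_{p}$ in $H^{1,p}(D_R,\C)$ and hence $(i\nabla+A_{p})\tilde\varphi_a\to\beta(i\nabla+A_{p})\Psi_{p}$ in $L^2(D_R,\C^2)$. Upgrading the angular expansion \eqref{eq:131} via the Fourier decomposition of $\varphi_0$ in the eigenbasis of the magnetic angular operator on $\sphere{1}$ (in the spirit of \cite{FelliFerreroTerracini2011}) yields $(i\nabla+A_0)\tilde\varphi_0\to\beta(i\nabla+A_0)\psi_k$ in $L^2(D_R,\C^2)$. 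Combining these convergences,
\[
\int_{D_R}\left|(i\nabla+A_{p})\tilde\varphi_a-e^{i\alpha(\theta_{p}-\theta_0^{p})}(i\nabla+A_0)\tilde\varphi_0\right|^2\,dx\longrightarrow |\beta|^2\int_{D_R}\left|(i\nabla+A_{p})\Psi_{p}-e^{i\alpha(\theta_{p}-\theta_0^{p})}(i\nabla+A_0)\psi_k\right|^2\,dx
\]
as $|a|\to 0$.

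The main obstacle is the uniform tail estimate: one has to show that, as $R\to\infty$,
\[
\limsup_{|a|\to 0}\int_{(\Omega/|a|)\setminus D_R}\left|(i\nabla+A_{p})\tilde\varphi_a-e^{i\alpha(\theta_{p}-\theta_0^{p})}(i\nabla+A_0)\tilde\varphi_0\right|^2\,dx\longrightarrow 0.
\]
I plan to split the tail into an intermediate annulus $\{R\leq|x|\leq\delta/|a|\}$ and a far region $\{|x|\geq\delta/|a|\}$, for some small $\delta>0$ kept fixed. On the far region, which in the original scale is $|y|\geq\delta$, the $C^2_{\rm loc}$ convergence \eqref{eq:convergence-varphi-a-1}, the estimate $|\theta_a(y)-\theta_0^a(y)|=O(|a|/|y|)$ on $\Omega\setminus D_\delta$, and the rate $\lambda_a-\lambda_0=o(|a|^{2|\alpha-k|})$ supplied by Theorem~\ref{thm:eigenvalues} (valid because $2|\alpha-k|\notin\Z$ implies $2|\alpha-k|<1+\lfloor 2|\alpha-k|\rfloor$) combine to give an $o(|a|^{2|\alpha-k|})$ bound in the original variable, and hence a vanishing contribution after division by $|a|^{2|\alpha-k|}$. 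The delicate part is the intermediate annulus, where $\tilde\varphi_a$ and $\tilde\varphi_0$ must be compared uniformly to $\beta\Psi_{p}$ and $\beta\psi_k$ on a region whose radius grows like $|a|^{-1}$. This is to be achieved by coupling the Almgren-type frequency monotonicity and the magnetic Hardy inequality \eqref{eq:anello} with the sharp control on the Fourier coefficients of $\varphi_a$ at its own pole provided by Lemma~\ref{lemma:stimbe}, which reproduces precisely the decay encoded in \eqref{eq:Psip2}.

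Passing to the limit first in $|a|\to 0$ with $R$ fixed, then in $R\to\infty$ and using the finiteness of ${\mathfrak L}_{p}$ to guarantee $\int_{\R^2\setminus D_R}|(i\nabla+A_{p})\Psi_{p}-e^{i\alpha(\theta_{p}-\theta_0^{p})}(i\nabla+A_0)\psi_k|^2\,dx\to 0$, yields the claimed convergence to $|\beta|^2{\mathfrak L}_{p}$.
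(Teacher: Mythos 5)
Your overall architecture is sound and matches the paper's in outline: split into a fixed blown-up disk $D_R$ plus a tail, identify the inner limit via Theorem~\ref{t:bu} and \eqref{eq:conv-Wa}, define ${\mathfrak L}_p$ as the integral over $\R^2$ of $|(i\nabla+A_p)\Psi_p-e^{i\alpha(\theta_p-\theta_0^p)}(i\nabla+A_0)\psi_k|^2$, and prove ${\mathfrak L}_p>0$ by the Hardy inequality \eqref{eq:anello} plus unique continuation against the discontinuity of $e^{i\alpha(\theta_p-\theta_0^p)}\psi_k$ on $\Gamma_p$. Those parts are correct.

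The genuine gap is the tail estimate, which you yourself flag as ``the main obstacle'' but do not actually close, and the specific routes you sketch would not work. On the far region $\{|y|\geq\delta\}$ you invoke the $C^2_{\rm loc}$ convergence \eqref{eq:convergence-varphi-a-1} together with the eigenvalue rate from Theorem~\ref{thm:eigenvalues} to claim an $o(|a|^{2|\alpha-k|})$ bound; but \eqref{eq:convergence-varphi-a-1} is a qualitative convergence with no rate, and knowing $\lambda_a-\lambda_0=o(|a|^{2|\alpha-k|})$ does not by itself yield a rate for $\|(i\nabla+A_a)\varphi_a-e^{i\alpha(\theta_a-\theta_0^a)}(i\nabla+A_0)\varphi_0\|_{L^2}$ away from the pole --- the eigenfunction rate is precisely what must be proved, not an input. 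On the intermediate annulus $\{R\leq|x|\leq\delta/|a|\}$ you defer to Almgren monotonicity, Hardy, and the Fourier control of Lemma~\ref{lemma:stimbe}; those tools bound $\varphi_a$ itself near $a$ but do not control the \emph{difference} between $\varphi_a$ and $\varphi_0$ across the unboundedly many scales between $R|a|$ and $\delta$. The paper's mechanism for exactly this point is Theorem~\ref{thm:energy_estimates}: using the simplicity of $\lambda_0$ one shows that the differential $dF(\lambda_0,\varphi_0)$ of the map $F(\lambda,\varphi)=(\|\varphi\|^2-\lambda_0,\,\Im\int\varphi\overline{\varphi_0},\,(i\nabla+A_0)^2\varphi-\lambda\varphi)$ is invertible, so that $\|v_{n_0,R,a}-\varphi_0\|_{H^{1,0}_0(\Omega,\C)}$ is controlled by the residual $\|F(\lambda_a,v_{n_0,R,a})\|$, whose dominant contribution reduces after scaling to a single boundary term $h(p,a,R)$ on $\partial D_R$; combining with \eqref{eq:HabigO} gives the bound $|a|^{|\alpha-k|}G(p,a,R)^{1/2}$ of Lemma~\ref{l:energy_outside}, and the blow-up limits of $\tilde\varphi_a$ and $Z_a^R$ (Lemmas~\ref{lemma:convergence-blow-up} and \ref{l:blowZ}) together with the Dirichlet principle and \eqref{eq:Psip2} show that $\lim_{R\to\infty}G(p,R)=0$. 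Without this (or an equivalent quantitative linearization argument), the interchange of the limits $|a|\to0$ and $R\to\infty$ in your last step is unjustified.
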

We observe that Theorem~\ref{thm:eigenfunctions} extends to the case
of non-half-integer circulation an analogous result obtained in
\cite{AbatangeloFelli2017} for half-integer circulation.

The paper is organized as follows. In Section~\ref{sec:local_asymptotics} 
we perform a detailed description of the behavior of the eigenfunction 
$\varphi_a$ near the pole $a$, which is crucial in Section~\ref{sec:monotonicity} 
to prove an Almgren type monotonicity formula and to derive local energy estimates 
for eigenfunctions uniformly with respect to the moving pole. In Section~
\ref{sec:estim-texorpdfstr-} we obtain some upper and lower bounds for the difference 
$\lambda_0-\lambda_a$ by exploiting the Courant-Fisher minimax characterization of 
eigenvalues and testing the Rayleigh quotient with suitable competitor functions. 
Section~\ref{sec:blow-up-analysis} contains a blow-up analysis for scaled eigenfunctions 
which allows proving Theorems~\ref{thm:eigenvalues} and \ref{t:bu}. Finally, in Section~
\ref{sec:rate-conv-eigenf} we prove Theorem~\ref{thm:eigenfunctions}.

\medskip

\noindent {\bf Notation. }
We list below some notation used throughout the paper.\par
\begin{itemize}
\item[-] For all $r > 0$ and $a \in \R^2$, $D_r(a) = \{ x \in \R^2 \, : \, |x-a|<r \}$ 
denotes the disk of center $a$ and radius $r$.
\item[-] For all $r > 0$, $D_r = D_r(0)$ and $\mathbb{S}^1=\partial D_1$.
\item[-] $ds$ denotes the arc length on $\partial D_r(a)$.
\item[-] For every complex number $z \in \C$, $\overline{z}$ denotes its complex conjugate.
\item[-] For $z \in \C$, $\Re z$ denotes its real part and $\Im z$ its imaginary part.
\end{itemize}

\section{Local asymptotics of eigenfunctions}\label{sec:local_asymptotics}

We recall from \cite{FelliFerreroTerracini2011} the description of the asymptotics at 
the singularity of solutions to elliptic equations with Aharonov-Bohm potentials. In 
the case of Aharonov-Bohm potentials with circulation $\alpha\in(0,1) \setminus 
\{ \frac12 \}$, such asymptotics is described in terms of eigenvalues and eigenfunctions 
of the following operator $\mathcal H$ acting on $2\pi$-periodic functions
\[
\mathcal H \psi = -\psi'' + 2 i \alpha \psi' + \alpha^2 \psi.
\]
It is easy to verify that the eigenvalues of $\mathcal H$ are $\big\{ (\alpha-j)^2  : \, 
j \in \Z \big\}$; each eigenvalue $(\alpha-j)^2$ has multiplicity $1$ and the eigenspace 
associated is generated by the function $\frac{e^{ijt}}{\sqrt{2\pi}}$. Let us enumerate 
the eigenvalues $(\alpha-j)^2$ as $\big\{ (\alpha-j)^2 \, : \, j \in \Z \big\} = 
\{\mu_j \, : \, j=1,2,\dots \}$ with $\mu_1 < \mu_2 < \mu_3 < \ldots$, so that
\begin{equation}\label{eq:mu1_def}
\mu_1 = \min\{ \alpha^2,(1-\alpha)^2 \}
\end{equation}
and $\mu_2 = \max\{ \alpha^2 , (1-\alpha)^2\}$.

\begin{Proposition}[\cite{FelliFerreroTerracini2011}, Theorem 1.3] \label{prop:fft}
Let $\Omega \subset \R^2$ be a bounded open set containing $b$, $\lambda \in \R$, 
and $u \in H^{1,b}_0(\Omega,\C)$ be a nontrivial weak solution to the problem
\begin{equation*}
(i \nabla + A_b)^2 u = \lambda u,  \quad \text{ in } \Omega,
\end{equation*}
i.e. 
\[
\int_\Omega (i \nabla + A_b) u \cdot \overline{ ( i \nabla +A_b) v} \, dx 
= \lambda \int_\Omega u \overline{v} \, dx
\quad \text{ for all } v \in H^{1,b}_{0}(\Omega,\C).
\]
Then there exists $j \in \Z$ such that 
\[
\lim_{r \to 0^+} \dfrac{r \int_{D_r(b)} \big( \abs{(i\nabla+A_b)u(x)}^2 
- \lambda \abs{u(x)}^2 \big) \, dx }{\int_{\partial D_r(b)} \abs{u}^2\, ds} 
= |\alpha-j|.
\]
Furthermore, there exists $\beta(b,u,\lambda)\neq 0$ such that
\begin{equation}\label{eq:5}
r^{-|\alpha-j|} u(b + r(\cos t,\sin t)) \to \beta(b,u,\lambda) \frac{e^{ijt}}{\sqrt{2\pi}}
\quad \text{ in } C^{1,\tau}([0,2\pi],\C)
\end{equation}
as $r\to0^+$ for any $\tau\in (0,1)$.
\end{Proposition}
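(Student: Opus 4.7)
The plan is to prove this via an Almgren--type monotonicity formula combined with a blow-up analysis. Introduce the height and energy functions
\[
H(r) = \frac{1}{r}\int_{\partial D_r(b)} |u|^2\,ds, \qquad
E(r) = \int_{D_r(b)}\bigl(|(i\nabla+A_b)u|^2 - \lambda |u|^2\bigr)\,dx,
\]
so that the quantity in the statement is the Almgren quotient $N(r) = rE(r)/(rH(r)) = E(r)/H(r)$. First I would compute $H'(r)$ by differentiating the boundary integral, using the equation tested against $u$ to rewrite the result in terms of $\Re \int_{\partial D_r(b)}(i\nabla+A_b)u\cdot\nu\,\overline{u}\,ds$, obtaining $H'(r) = 2E(r)/r$. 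Then I would derive a Pohozaev-type identity by multiplying the equation by $(x-b)\cdot\overline{(i\nabla+A_b)u}$, integrating over $D_r(b)$, and taking real parts; the radial structure of $A_b$ (which is tangential and of size $|x-b|^{-1}$) makes the magnetic contribution compatible with the scaling and produces
\[
E'(r) = \frac{2}{r}\int_{\partial D_r(b)}|\nu\cdot(i\nabla+A_b)u|^2\,ds + O(r)\,\text{(lower order from }\lambda|u|^2\text{)}.
\]
Combining these via Cauchy--Schwarz on $\partial D_r(b)$ yields $N'(r) \geq -c_1 r - c_2 r N(r)$, which, together with the lower bound $N(r) \geq \mu_1 > 0$ coming from the Hardy inequality \eqref{eq:hardy}, implies that $N$ is monotone up to an absolutely integrable perturbation and hence admits a finite positive limit $\gamma$ as $r\to 0^+$.

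Next I would identify $\gamma$ with $|\alpha - j|$ for some $j\in\Z$. The natural rescaling is $w_r(x) = u(b + rx)/\sqrt{H(r)}$, which is $L^2$-normalized on $\partial D_1$ and solves $(i\nabla+A_0)^2 w_r = r^2 \lambda w_r$ in expanding disks. Iterating the differential inequality for $N$ yields two-sided bounds $c_1 r^{2\gamma} \leq H(r) \leq c_2 r^{2\gamma}$ and a uniform $H^{1,0}(D_R,\C)$ bound on $w_r$ for each fixed $R$, so along a subsequence $w_r \rightharpoonup w_\star$ weakly and, by the magnetic Hardy inequality on annuli \eqref{eq:anello}, strongly in $L^2$. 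A Fatou/doubling argument on $N(r)$ versus $N(\rho r)$ shows that $w_\star$ is homogeneous of degree $\gamma$. Fourier-expanding $w_\star$ on $\partial D_1$ in the basis $\{e^{ijt}/\sqrt{2\pi}\}_{j\in\Z}$ (which diagonalizes $\mathcal H$) and using the homogeneous equation $(i\nabla+A_0)^2 w_\star = 0$ forces each active mode to satisfy $\gamma^2 = (\alpha - j)^2$; since $\gamma > 0$ there is exactly one such $j$, namely the unique minimizer of $j\mapsto |\alpha-j|$ compatible with the dominant mode in the Fourier expansion of $u$ near $b$, and $w_\star = c\,r^{|\alpha - j|} e^{ijt}/\sqrt{2\pi}$ with $c\ne 0$. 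By uniqueness of $w_\star$, the convergence holds along the full family $r\to 0^+$.

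For the sharpened convergence \eqref{eq:5}, I would first upgrade $w_r\to w_\star$ to $C^{1,\tau}_{\rm loc}(\R^2\setminus\{0\},\C)$ via interior Schauder estimates: on any annulus $D_{2R}\setminus\overline{D_{R/2}}$ the potential $A_0$ is smooth, so standard elliptic theory (applied for instance to the real and imaginary parts of $e^{-i\alpha\theta_0^b}w_r$ on a simply connected subdomain where the gauge $e^{i\alpha\theta_0^b}$ is well-defined) gives $C^{2,\tau}$ bounds, and patching the two local gauges across the branch cut gives a $C^{1,\tau}$ bound on the full circle $\partial D_1$. Since $H(r)/r^{2|\alpha - j|}\to |\beta|^2$ for a nonzero constant $\beta:=\beta(b,u,\lambda)$ (the limit exists and is positive by the refined monotonicity for $H(r)/r^{2\gamma}$), rescaling gives $r^{-|\alpha-j|}u(b+r(\cos t,\sin t))\to \beta\, e^{ijt}/\sqrt{2\pi}$ in $C^{1,\tau}([0,2\pi],\C)$.

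The hard part will be establishing monotonicity of $N(r)$ with quantitatively sharp error terms, because the magnetic potential $A_b$ is singular at $b$ and the naive Pohozaev computation produces boundary and interior terms involving $|A_b|^2|u|^2$ that must be absorbed using \eqref{eq:hardy}; closely related is the need to handle the branch cut of the gauge $e^{i\alpha\theta_0^b}$ when extracting global regularity on $\partial D_1$ from interior elliptic estimates valid only on simply connected patches.
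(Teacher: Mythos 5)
This proposition is not proved in the paper at all: it is imported verbatim from \cite{FelliFerreroTerracini2011}, so there is no internal proof to compare against. Your outline does follow the strategy of that cited source: Almgren monotonicity for $N=E/H$, blow-up of $u(b+r\,\cdot)/\sqrt{H(r)}$, and identification of the limiting homogeneity with an eigenvalue $(\alpha-j)^2$ of the angular operator $\mathcal H$. The structural steps $H'=2E/r$, the Pohozaev identity (with no extra pole term, since here the disks are centred at the pole $b$), the compactness of the blow-up family, and the classification of homogeneous solutions via separation of variables are all sound.

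There is, however, one genuine gap, and it sits at the most important conclusion, $\beta(b,u,\lambda)\neq 0$. You derive it from the claim that ``$H(r)/r^{2\gamma}$ has a positive limit by the refined monotonicity.'' Almost-monotonicity of $N$ gives only the one-sided bound $N(r)-\gamma\geq -Cr^2$, hence that $\log\big(H(r)r^{-2\gamma}\big)+Cr^2$ is nondecreasing; this shows the limit of $H(r)r^{-2\gamma}$ exists in $[0,+\infty)$ but says nothing about its positivity. Positivity requires the reverse estimate $\int_0^{r_0}\big(N(s)-\gamma\big)\frac{ds}{s}<+\infty$, i.e.\ a quantitative rate $N(r)-\gamma=O(r^{\delta})$ from above, which must be extracted either from the spectral gap between consecutive eigenvalues of $\mathcal H$ or, as in \cite{FelliFerreroTerracini2011} and in Lemma~\ref{lemma:stimbe} of this paper, from the explicit ODEs satisfied by the Fourier coefficients $v_j(r)$, which give $v_j(r)=\beta_j r^{|\alpha-j|}(1+O(r^2))$ and hence $H(r)=\sum_j|\beta_j|^2 r^{2|\alpha-j|}(1+O(r^2))$ directly (together with a strong unique continuation step to exclude that all $\beta_j$ vanish). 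Without one of these ingredients the limit of $H(r)r^{-2\gamma}$ could a priori be zero and the ``Furthermore'' part fails. Two smaller slips that do not affect the architecture: the Pohozaev derivative should read $E'(r)=2\int_{\partial D_r(b)}|(i\nabla+A_b)u\cdot\nu|^2\,ds-\frac{2\lambda}{r}\int_{D_r(b)}|u|^2\,dx$ (no extra factor $1/r$ on the first term), and the lower bound for $N$ near $0$ coming from the boundary-trace Poincar\'e inequality is $\sqrt{\mu_1}$, not $\mu_1$.
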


Let us fix $n\in \N$, $n\geq1$. For all $a\in\Omega$, let $\varphi_n^a\in
H^{1,a}_{0}(\Omega,\C)\setminus\{0\}$ be an eigenfunction of problem
\eqref{eq:eige_equation_a} associated to the eigenvalue $\lambda_n^a$,
i.e. solving
\begin{equation}\label{eq_eigenfunction_n}
 \begin{cases}
   (i\nabla + A_a)^2 \varphi_n^a = \lambda_n^a \varphi_n^a,  &\text{in }\Omega,\\
   \varphi_n^a = 0, &\text{on }\partial \Omega,
 \end{cases}
\end{equation}
such that 
\begin{equation}\label{eq:23_n}
  \int_\Omega |\varphi_n^a(x)|^2\,dx=1.
\end{equation}
Since $a\in\Omega\mapsto \lambda_n^a$ admits a
continuous extension on $\overline{\Omega}$ as proved in \cite[Theorem
1.1]{BonnaillieNorisNysTerracini2014}, we have that
\begin{equation}\label{eq:Lambda_n}
  \Lambda_n=\sup_{a\in\Omega}\lambda_n^a\in(0,+\infty).
\end{equation}
Moreover, from \eqref{eq_eigenfunction_n}, \eqref{eq:23_n},  and \eqref{eq:hardy} it follows that 
\begin{equation*}
\{\varphi_n^a\}_{a\in \Omega} \text{ is bounded in } H^1(\Omega,\C),
\end{equation*}
which, by \eqref{eq_eigenfunction_n} and classical
elliptic regularity theory, implies that, for each  $\omega
  \subset\subset\Omega\setminus\{0\}$, there exists $\rho_\omega>0$
  such that 
\begin{equation}\label{eq:13}    
\{\varphi_n^a\}_{|a|\leq\rho_\omega} \text{ is bounded in }
C^{2,\sigma}(\omega,\C)\text{ for every $\sigma\in(0,1)$}.
\end{equation}
The following lemma provides a detailed description of the behaviour
of the Fourier coefficients of the function $t\mapsto
\varphi_n^a(a+r(\cos t,\sin t))$ as $a$ is close to $0$.
\begin{Lemma}\label{lemma:stimbe}
For $n\geq1$ fixed and $a$ varying in $\Omega$, let $\varphi_n^a\in
H^{1,a}_{0}(\Omega,\C)\setminus\{0\}$ satisfy
\eqref{eq_eigenfunction_n} and \eqref{eq:23_n}.
For all $j \in \Z$ and $a \in \Omega$, let 
\begin{equation}\label{eq:def_vja}
v_j^a(r) = \frac1{\sqrt{2\pi}}\int_0^{2\pi}\varphi_n^a(a+r(\cos t,\sin t)) e^{-ijt}\,dt.
\end{equation}
Then there exists $\rho_0>0$ such that, for all $a$ with
$|a|\leq\rho_0$,  the following properties hold.
\begin{enumerate}[\rm (i)]
\item For all $j \in \Z$, $v_j^a(r) = O(r^{|\alpha-j|})$ as $r \to 0^+$. 
In particular, for  all $j \in \Z$ and for all $R > 0$ such that 
$\{ x \in \R^2 \, : \, |x-a| \leq R \} \subset \Omega$, the value
\begin{equation}\label{eq:beta-2}
\beta_j^a  = \frac{ v_j^a(R) }{ R^{|\alpha-j|} } + 
\frac{\lambda_n^a}{2 |\alpha-j|} \int_0^R \bigg( s^{1-|\alpha-j|} 
-\frac{ s^{ 1 + |\alpha-j| } }{ R^{2 |\alpha-j|} } \bigg) v_j^a(s) \,ds
\end{equation}
is well defined and independent of $R$.
\item For all $j\in \Z$, 
$|\beta_j^a|\leq B$ for some $B>0$ 
independent of $j$ and $a$.
\item For all $j \in \Z$,
\[  
v_j^a(r) = r^{|\alpha-j|} \beta_j^a ( 1 + R_{j,a}(r)) 
\quad \text{and} \quad 
(v_j^a)'(r) = |\alpha-j| \beta_j^a r^{|\alpha-j|-1} ( 1 + \widetilde R_{j,a}(r))
\] 
where $|R_{j,a}(r)| + |\widetilde R_{j,a}(r)| \leq {\rm const\,} r^2$ for some
${\rm const} > 0$ 
independent of $j$ and $a$. 
\item $\varphi_n^a$ can be expanded as
\[
\varphi_n^a (a + r (\cos t,\sin t)) = \frac1{\sqrt{2\pi}}\sum_{j \in \Z} 
r^{|\alpha-j|}\beta_j^a(1+R_{j,a}(r))
e^{ijt}
\]
being $R_{j,a}(r)$ as in {\rm (iii)}, where the convergence of the above
series is uniform on disks $D_R(a)$ for each $R\in(0,1)$.
\item  Letting ${\boldsymbol{\nu}}(t)=(\cos t,\sin t)$ and
  ${\boldsymbol{\tau}}(t)=(-\sin t,\cos t)$, $(i\nabla+A_a)\varphi_n^a$ can be expanded as
\begin{align*}
(i\nabla+A_a) & \varphi_n^a(a + r (\cos t,\sin t)) \\
& = \frac1{\sqrt{2\pi}} \sum_{j \in \Z} \beta_j^a r^{|\alpha-j|-1} 
\Big(i|\alpha-j|(1+\widetilde R_{j,a}(r)){\boldsymbol{\nu}}(t) 
+ (\alpha-j) ( 1 + R_{j,a}(r)) {\boldsymbol{\tau}}(t)\Big) e^{ijt}
\end{align*}
being $R_{j,a}(r),\widetilde R_{j,a}(r)$ as in {\rm (iii)}, where the above
series converges absolutely in $L^2(D_R(a),\C)$ and point-wise in $D_R(a)$ 
for each $R\in(0,1)$.
\end{enumerate}
\end{Lemma}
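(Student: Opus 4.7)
The plan is to reduce every claim to a one-dimensional ODE analysis of the Fourier modes $v_j^a$. In polar coordinates $(r,t)$ centred at $a$ the potential $A_a$ becomes purely angular, $A_a=(\alpha/r)\boldsymbol{\tau}(t)$, so a direct computation yields
\[
(i\nabla+A_a)^2\bigl(v(r)\,e^{ijt}\bigr)=\Bigl(-v''(r)-\tfrac{1}{r}v'(r)+\tfrac{(\alpha-j)^2}{r^2}v(r)\Bigr)e^{ijt}.
\]
Projecting \eqref{eq_eigenfunction_n} onto $e^{ijt}/\sqrt{2\pi}$, each $v_j^a$ satisfies the Bessel-type equation
\[
-(v_j^a)''(r)-\tfrac{1}{r}(v_j^a)'(r)+\tfrac{(\alpha-j)^2}{r^2}v_j^a(r)=\lambda_n^a\,v_j^a(r),\quad r\in(0,R_a),
\]
with $R_a=\mathrm{dist}(a,\partial\Omega)$. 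Since $\varphi_n^a\in H^{1,a}_0(\Omega,\C)$ forces $v_j^a/r\in L^2((0,R_a),r\,dr)$, the singular homogeneous mode $r^{-|\alpha-j|}$ is excluded; a standard ODE-regularity argument then gives $v_j^a(r)=O(r^{|\alpha-j|})$ as $r\to 0^+$, establishing the first half of (i).

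To derive the representation \eqref{eq:beta-2} I introduce, writing $\nu_j=|\alpha-j|$, the Wronskian-type auxiliary functions
\[
W_j(r)=r^{\nu_j+1}\!\Bigl((v_j^a)'(r)-\tfrac{\nu_j}{r}v_j^a(r)\Bigr),\qquad
U_j(r)=r^{1-\nu_j}\!\Bigl((v_j^a)'(r)+\tfrac{\nu_j}{r}v_j^a(r)\Bigr).
\]
Differentiation together with the ODE yield $W_j'(r)=-\lambda_n^a\,r^{\nu_j+1}v_j^a$ and $U_j'(r)=-\lambda_n^a\,r^{1-\nu_j}v_j^a$, while the leading asymptotic $v_j^a\sim\beta_j^a r^{\nu_j}$ gives $W_j(0^+)=0$ and $U_j(0^+)=2\nu_j\beta_j^a$. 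Integrating both identities from $0$ to $R$ and eliminating $(v_j^a)'(R)$ between them produces exactly \eqref{eq:beta-2}; the construction makes the $R$-independence manifest, finishing (i).

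For (iii) I exploit the $R$-independence of \eqref{eq:beta-2} by inserting $R=r$, which gives
\[
v_j^a(r)=\beta_j^a\,r^{\nu_j}-\frac{\lambda_n^a\,r^{\nu_j}}{2\nu_j}\int_0^r\!\Bigl(s^{1-\nu_j}-\tfrac{s^{1+\nu_j}}{r^{2\nu_j}}\Bigr)v_j^a(s)\,ds.
\]
Setting $M_j^a(r)=\sup_{s\in(0,r)}|v_j^a(s)|/s^{\nu_j}$, the two terms in the integrand are each dominated by $M_j^a(r)\,s$, so the integral is controlled by $M_j^a(r)\,r^2$. A bootstrap/Gr\"onwall-type argument, exploiting that $\nu_j\ge\nu_{\min}:=\min(\alpha,1-\alpha)>0$, yields $M_j^a(r)\le 2|\beta_j^a|$ for $r$ smaller than a universal threshold; plugging back gives $v_j^a(r)=r^{\nu_j}\beta_j^a(1+R_{j,a}(r))$ with $|R_{j,a}(r)|\le Cr^2$, $C$ depending only on $\Lambda_n$ and $\nu_{\min}$. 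The companion expansion for $(v_j^a)'$ follows by the identical argument starting from $W_j$. Parts (iv) and (v) are then immediate: (iv) combines (iii) with the uniform bound (ii), the convergence being ensured by the Weierstrass $M$-test together with $\sum_{j\in\Z}r^{\nu_j}<\infty$ for $r<1$; (v) is obtained by term-by-term covariant differentiation using
\[
(i\nabla+A_a)\bigl(v(r)e^{ijt}\bigr)=\Bigl(iv'(r)\boldsymbol{\nu}(t)+\tfrac{\alpha-j}{r}v(r)\boldsymbol{\tau}(t)\Bigr)e^{ijt}
\]
and inserting the expansion of $(v_j^a)'$.

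The main obstacle is the uniform bound (ii). The idea is to apply \eqref{eq:beta-2} with a fixed $R\in(1,2)$ chosen so that $D_R(a)\subset\Omega$ whenever $|a|<\rho_0$ is small enough. The boundary term $|v_j^a(R)|/R^{\nu_j}\le R^{-\nu_j}\|\varphi_n^a\|_{L^\infty(\partial D_R(a))}$ is easy, since $R^{-\nu_j}\le 1$ and the uniform $C^2$-bound \eqref{eq:13} on the annulus away from the origin gives $\|\varphi_n^a\|_{L^\infty(\partial D_R(a))}\le C$. The delicate point is the singular integral: the naive Cauchy-Schwarz estimate fails when $\nu_j\ge 1$ because $\int_0^R s^{1-2\nu_j}\,ds$ diverges. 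The strategy is to combine the uniform bounds $\int_0^R r|v_j^a(r)|^2\,dr\le 1$ (from the normalization \eqref{eq:23_n}) and $\int_0^R |v_j^a(r)|^2/r\,dr\le C$ (from the magnetic Hardy inequality \eqref{eq:hardy}) with the prefactor $1/\nu_j$ in \eqref{eq:beta-2}, exploiting the near-cancellation between $s^{1-\nu_j}$ and $s^{1+\nu_j}/R^{2\nu_j}$ close to $s=R$ and the already-established expansion (iii) to close the estimate for $|\beta_j^a|$ self-consistently.
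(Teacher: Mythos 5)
Your treatment of (i), (iii), (iv), (v) is sound and in places cleaner than the paper's: the Frobenius/indicial-root argument for $v_j^a=O(r^{|\alpha-j|})$ replaces the paper's iterative bootstrap of the Volterra identity starting from the $O(r^{\sqrt{\mu_1}})$ bound of Proposition \ref{prop:fft}, and your Wronskian pair $W_j,U_j$ (whose derivatives I checked are indeed $-\lambda_n^a r^{\nu_j+1}v_j^a$ and $-\lambda_n^a r^{1-\nu_j}v_j^a$) gives \eqref{eq:beta-2} with the $R$-independence manifest as $\beta_j^a=U_j(0^+)/(2\nu_j)$, rather than by double integration and comparison of constants. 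The Volterra identity obtained by setting $R=r$, the Gr\"onwall bound $M_j^a(r)\le C|\beta_j^a|$, and the resulting $O(r^2)$ remainders all match the paper's \eqref{eq:7-2}--\eqref{eq:3-1}.

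However, part (ii) — the uniform bound on $|\beta_j^a|$, which is the heart of the lemma — is not proved. Your self-consistent closing with a single fixed $R\in(1,2)$ gives $|\beta_j^a|\le \mathrm{const}+\frac{\Lambda_n C R^2}{2\nu_j}|\beta_j^a|$ (with $C$ the Gr\"onwall constant), which absorbs the right-hand side only when $\nu_j>\Lambda_n CR^2/2$; it fails for all $j$ with $\nu_j$ below this threshold, and that set is not controlled. Your fallback for those modes does not close the gap: Cauchy--Schwarz against the Hardy bound $\int_0^R|v_j^a|^2 s^{-1}ds\le \Lambda_n/\mu_1$ requires $\int_0^R s^{3-2\nu_j}ds<\infty$, i.e. $\nu_j<2$, leaving the range $2\le\nu_j\le \Lambda_n CR^2/2$ uncovered; and the ``near-cancellation'' of $s^{1-\nu_j}-s^{1+\nu_j}/R^{2\nu_j}$ near $s=R$ is irrelevant, since the obstruction sits at $s=0$. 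The missing idea is the paper's two-regime choice of $R$ in \eqref{eq:beta-2}: for $|\alpha-j|>K$ (with $K$ large enough that $\Lambda_n C/(2K)<\tfrac12$) take $R=1$, so the boundary term is $|v_j^a(1)|\le\mathrm{const}$ by \eqref{eq:13} and the integral term is absorbed thanks to the $1/|\alpha-j|$ prefactor; for the finitely many $j$ with $|\alpha-j|\le K$ take instead $R=R_0$ small enough that $\Lambda_n C R_0^2/(2\sqrt{\mu_1})<\tfrac12$, so the integral is absorbed by smallness of $R_0$ while the boundary term $R_0^{-|\alpha-j|}|v_j^a(R_0)|\le R_0^{-K}|v_j^a(R_0)|$ stays bounded uniformly by \eqref{eq:13}. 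Without this (or an equivalent device) statement (ii) — and hence the uniform convergence in (iv)--(v) — is not established.
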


\begin{proof}
The functions $\big\{\frac{e^{ijt}}{\sqrt{2\pi}}\big\}_{j\in\Z}$ form an orthonormal basis of 
$L^2((0,2\pi),\C)$. Hence, recalling that we are assuming that
$\overline{D_2} \subset \Omega$,  if $|a|$ sufficiently small
$\varphi_n^a$ can be expanded as
\begin{equation}\label{eq:Fourier}
\varphi_n^a (a + r (\cos t,\sin t)) = \sum_{j \in \Z} v_j^a(r) \frac{e^{ijt}}{\sqrt{2\pi}} 
\quad \text{ in } L^2((0,2\pi),\C) \text{ for all } r\in(0,1],
\end{equation}
where $v_j^a$ is defined in \eqref{eq:def_vja}. Equation 
\eqref{eq:equation_a} implies that, for every $j \in \mathbb{Z}$,
\begin{equation}\label{eq:45}
-(v_j^a)''(r) - \frac{1}{r}(v_j^a)'(r) + \frac{(\alpha-j)^2}{r^2}v_j^a(r) 
= \lambda_n^a v_j^a(r), \quad \text{ for all } r \in (0,1],
\end{equation}
or equivalently
\[
- r^{|\alpha - j| - 1} \left( r^{1 - 2 |\alpha - j|} \left( r^{|\alpha - j|} v_j^a \right)' \right)' 
= \lambda_n^a v_j^a(r), \quad \text{ for all } r \in (0,1].
\]
Integrating twice between $r$ and $1$, we obtain, for some $c_{1,j}^a,c_{2,j}^a \in \C$,
\begin{equation}\label{eq:vja}
v_j^a(r) = r^{|\alpha-j|} \bigg( c_{1,j}^a 
+ \lambda_n^a \int_r^{1} \frac{s^{-|\alpha-j|+1}}{2|\alpha-j|} v_j^a(s) \, ds \bigg) 
+ r^{-|\alpha-j|} \bigg( c_{2,j}^a 
- \lambda_n^a \int_r^{1} \frac{s^{|\alpha-j|+1}}{2|\alpha-j|} v_j^a(s) \,ds \bigg),
\end{equation}
for all $r\in(0,1]$.

The convergence \eqref{eq:5} in Proposition~\ref{prop:fft} implies that, for all $a$, 
$|\varphi_n^a(a+r(\cos t,\sin t))| = O(r^{\sqrt{\mu_1}})$ as $r\to0^+$, with $\mu_1$ as in 
\eqref{eq:mu1_def} (not necessarily uniformly with respect to $a$). 
Hence, for every $a$ 
in a sufficiently small neighborhood of $0$, there exists a constant
  $C(a)>0$ such that, for all 
 $j\in\Z$,
 \begin{equation}\label{eq:first_est_vja}
|v_j^a(r)|\leq C(a) r^{\sqrt{\mu_1}} \quad \text{ for all } r \in [0,1].
\end{equation}
We deduce that each function $v_j^a$ is bounded near $0$,
hence \eqref{eq:vja} necessarily yields
\begin{equation}\label{eq:c2}
c_{2,j}^a = \lambda_n^a \int_0^{1} \frac{s^{|\alpha-j|+1}}{2|\alpha-j|} v_j^a(s) \, ds.
\end{equation}
We can therefore rewrite
\begin{align} \label{eq:7-1}
v_j^a(r) &=r^{|\alpha-j|} \bigg( c_{1,j}^a 
+ \frac{\lambda_n^a}{2|\alpha -j|} \int_r^{1} s^{-|\alpha-j|+1} v_j^a(s) \, ds \bigg)\\
\notag&\quad+ \frac{\lambda_n^a}{2|\alpha-j|} r^{-|\alpha-j|} \int_0^r s^{|\alpha-j|+1} v_j^a(s) \, ds.
\end{align}
If
  $\sqrt{\mu_1}+2\geq |\alpha-j|$,  using
    \eqref{eq:first_est_vja} to estimate the right hand side of
    \eqref{eq:7-1} we obtain the improved estimate
  $|v_j^a(r)|\leq C(j,a) r^{|\alpha-j|}$. Otherwise, if
  $\sqrt{\mu_1}+2<|\alpha-j|$, we can use
    \eqref{eq:first_est_vja} to estimate the right hand side of
    \eqref{eq:7-1} to obtain the improved estimate
  $|v_j^a(r)|\leq C(j,a) r^{\sqrt{\mu_1}+2}$, for some
    constant $C(j,a)>0$ depending on $a$ and $j$. By iterating the process $m+1$ times, with $m$ the
  largest natural number such that $\sqrt{\mu_1}+2m<|\alpha-j|$, we
  obtain that $|v_j^a(r)| \leq C(j,a) r^{|\alpha - j|}$, possibly for
  a different constant $C(j,a)$.  We deduce that the quantity
$\beta_j^a$ introduced in \eqref{eq:beta-2} is well
defined.  The fact that $\beta_j^a$ is independent of $R$ is a direct 
consequence of \eqref{eq:45} and \eqref{eq:7-1}. This proves statement {\rm(i)}. 

Using the independence of $\beta_j^a$ with respect to $R$, we choose $R=1$ in \eqref{eq:beta-2} 
and $r = 1$ in \eqref{eq:7-1} and obtain that
\begin{equation}\label{eq:7}
\beta_j^a=c_{1,j}^a+\frac{\lambda_n^a}{2|\alpha-j|} \int_0^{1}
s^{-|\alpha-j|+1} v_j^a(s) \, ds,
\end{equation}
so that \eqref{eq:7-1} can be rewritten as 
\begin{equation}\label{eq:7-2}
v_j^a(r)=r^{|\alpha-j|}
\bigg(\beta_j^a-\lambda_n^a\int_0^r\frac{s^{-|\alpha-j|+1}}{2|\alpha-j|}
v_j^a(s)\,ds\bigg)+\lambda_n^a r^{-|\alpha-j|}
\int_0^r\frac{s^{|\alpha-j|+1}}{2|\alpha-j|}
v_j^a(s)\,ds.
\end{equation}
From
\eqref{eq:7-2} it follows that, for all $r\in(0,1]$, 
\begin{align*}
r^{-|\alpha-j|} |v_j^a(r)| &\leq |\beta_{j}^a| +
\frac{\lambda_n^a }{2|\alpha -j|} \int_0^r s^{-|\alpha-j|} |v_j^a(s)| \,
ds \\
&\qquad +\frac{\lambda_n^a }{2|\alpha-j|} r^{-2|\alpha-j|} \int_0^r
s^{2|\alpha-j|}s^{-|\alpha-j|} |v_j^a(s)| \, ds,\\
&\leq |\beta_{j}^a| +
\frac{\lambda_n^a }{|\alpha -j|} \int_0^r s^{-|\alpha-j|} |v_j^a(s)| \,
ds .
\end{align*}
Hence the Gronwall Lemma applied to the function
$r\mapsto r^{-|\alpha-j|} |v_j^a(r)|$ yields that
\begin{equation}\label{eq:3-1}
 r^{-|\alpha-j|} |v_j^a(r)| \leq C|\beta_{j}^a|
\quad \text{ for all } r \in (0,1] \text{ and } j \in \Z,
\end{equation}
for some constant $C>0$ independent of $j$, $a$, and $r$.

From  \eqref{eq:vja}, \eqref{eq:def_vja}, and \eqref{eq:13} it follows that 
\[
|c_{1,j}^a+c_{2,j}^a|=|v_j^a(1)|=
\frac1{\sqrt{2\pi}}\left|\int_0^{2\pi}\varphi_n^a(a+(\cos t,\sin t))
e^{-ijt}\,dt\right|\leq \textrm{const\,}
\]
for some $\textrm{const\,}>0$ independent of $j$ and $a$; moreover from 
\eqref{eq:c2} and \eqref{eq:23_n} we deduce that
\[
|c_{2,j}^a|\leq \frac{\lambda_n^a}{2|\alpha-j|}\int_0^{1}s|v_j^a(s)| \,ds 
\leq \frac{\lambda_n^a}{2|\alpha-j|\sqrt{2\pi}}\int_{D_{1}(a)} |\varphi_n^a|\,dx\leq \textrm{const},
\]
for some $\textrm{const\,}>0$ independent of $j$ and $a$. Hence 
\begin{equation}\label{eq:8}
|c_{1,j}^a|\leq \widetilde C
\end{equation}
for some $\widetilde C>0$ independent of $j$ and $a$.  

Let $K>0$ be such that
\[
\frac{\Lambda_n C}{2K}<\frac12
\]
with $C$ being as in \eqref{eq:3-1} and $\Lambda_n$ being as in \eqref{eq:Lambda_n}. Hence, from \eqref{eq:Lambda_n}, \eqref{eq:7}, \eqref{eq:3-1} and \eqref{eq:8} it follows that, if $|\alpha-j|>K$, then
\begin{equation}\label{eq:6}
\frac12 |\beta_j^a|\leq
\left(1-\frac{\Lambda_n C}{2K}\right)|\beta_j^a|\leq
|c_{1,j}^a|\leq \widetilde C.
\end{equation}
Let us choose $R_0\in(0,1)$ such that  
\begin{equation*}
\frac{\Lambda_n C R_0^2}{2\sqrt{\mu_1}}<\frac12. 
\end{equation*}
From \eqref{eq:beta-2} and \eqref{eq:3-1} it follows that, if
$|\alpha-j|\leq K$,
\begin{align*}
 R_0^{-K} |v_j^a(R_0)|\geq R_0^{-|\alpha-j|} |v_j^a(R_0)|&  =\left| \beta_j^a -
\frac{\lambda_n^a}{2 |\alpha-j|} \int_0^{R_0} \bigg( s^{1-|\alpha-j|} 
-\frac{ s^{ 1 + |\alpha-j| } }{ R_0^{2 |\alpha-j|} } \bigg) v_j^a(s)
                                 \,ds\right|\\
&\geq |\beta_j^a|-\frac{\Lambda_n C
  R_0^2}{2\sqrt{\mu_1}}|\beta_j^a|\geq \frac12 |\beta_j^a|
\end{align*}
Since, in view of \eqref{eq:13}, $v_j^a(R_0)$ is bounded uniformly with respect to $a$ and $j$, 
we conclude that, for all $j$ such that  $|\alpha-j|\leq K$, $|\beta_j^a|$ is bounded 
uniformly with respect to $a$ and $j$. This, together with \eqref{eq:6}, yields {\rm (ii)}.

From \eqref{eq:7-2} and \eqref{eq:3-1} it follows that 
\begin{equation}\label{eq:38}
v_j^a(r)=r^{|\alpha-j|}\beta_j^a(1+R_{j,a}(r))
\end{equation}
where $|R_{j,a}(r)|\leq  \textrm{const\,}r^2$ for 
some $\textrm{const\,}>0$ independent of $j$ and $a$, 
thus proving the first estimate in (iii). 
Differentiating \eqref{eq:7-2} and using the above estimate~\eqref{eq:38},
we easily obtain that 
\begin{equation*}
(v_j^a)'(r)=|\alpha-j|\beta_j^a r^{|\alpha-j|-1}(1+\widetilde R_{j,a}(r))
\end{equation*}
where $|\widetilde R_{j,a}(r)|\leq  \textrm{const\,}r^2$ for 
some $\textrm{const\,}>0$ independent of $j$ and $a$. Hence the proof of (iii) is complete.  

From \eqref{eq:Fourier} and (iii) we have that the series
$\frac1{\sqrt{2\pi}}\sum_{j \in \Z}
r^{|\alpha-j|}\beta_j^a(1+R_{j,a}(r)) e^{ijt}$ converges in
$L^2((0,2\pi),\C)$ to
$\varphi_n^a (a + r (\cos t,\sin t))$ for all $r\in(0,1]$. In view of the estimates
obtained in (ii)-(iii), Weierstrass M-Test ensures that the series  
is uniformly convergent in $D_R(a)$ for every
$R\in(0,1)$, thus proving (iv).

Let 
$f_j^a(a + r (\cos t, \sin t )) =
v_j^a(r)\frac{e^{ijt}}{\sqrt{2\pi}}$. Since 
\[
(i\nabla +A_a)f_j^a(a+r(\cos t,\sin t))=
\Big(i (v_j^a)'(r){\boldsymbol{\nu}}(t) 
+ (\alpha - j)\tfrac{v_j^a(r)}r{\boldsymbol{\tau}}(t)\Big)\frac{e^{ijt}}{\sqrt{2\pi}},
\]
the above estimates also imply that, for every
$R\in(0,1)$, the series of functions $\sum_j (i\nabla +A_a)f_j^a$
is convergent absolutely in $L^2(D_R(a),\C)$ and point-wise in $D_R(a)$ to
$(i\nabla +A_a) \varphi_n^a$ for every
$R\in(0,1)$. Hence (v) follows from (iii).
\end{proof}

\begin{Corollary}\label{c:cor1}
  Under the same assumptions and with the same notation as in Lemma
  \ref{lemma:stimbe}, let $R\in(0,1)$. Then, for all $r\in(0,R)$ and $t\in[0,2\pi]$,
\begin{align}\label{eq:3}
\varphi_n^a(a+r(\cos t,\sin t))
& = \tfrac{1}{\sqrt{2\pi}} \left(r^{\alpha} \beta_0^a + r^{1-\alpha}\beta_1^a e^{it}\right) + \mathcal R_a(r,t),\\
\label{eq:14} (i\nabla +A_a)\varphi_n^a(a+r(\cos t,\sin t))
& = \tfrac{1}{\sqrt{2\pi}} r^{\alpha-1} \beta_0^a \alpha 
\Big( i {\boldsymbol{\nu}}(t) + {\boldsymbol{\tau}}(t) \Big) 
\\
\notag &\quad + \tfrac{1}{\sqrt{2\pi}} r^{-\alpha}\beta_1^a (1-\alpha)
\Big( i {\boldsymbol{\nu}}(t) - {\boldsymbol{\tau}}(t) \Big) 
e^{it} + \widetilde{\mathcal R}_a(r,t)
\end{align}
where 
$|\mathcal R_a(r,t)|\leq  \text{\rm const\,}r^{1+\sqrt{\mu_1}}$
and
$|\widetilde{\mathcal R}_a(r,t)|\leq  \text{\rm const\,}r^{\sqrt{\mu_1}}$ for
some $\text{\rm const\,}>0$ independent of $a,r,t$.  
\end{Corollary}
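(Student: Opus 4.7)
The plan is to read the two expansions directly off items (iv) and (v) of Lemma \ref{lemma:stimbe} by isolating the $j = 0$ and $j = 1$ Fourier modes, which supply the leading contributions, and by collecting everything else into the remainders $\mathcal{R}_a$ and $\widetilde{\mathcal{R}}_a$.

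For \eqref{eq:3}, I would extract from the series in (iv) the explicit $j=0$ term $\frac{1}{\sqrt{2\pi}} r^\alpha \beta_0^a$ and the $j=1$ term $\frac{1}{\sqrt{2\pi}} r^{1-\alpha} \beta_1^a e^{it}$; the correction factors $R_{0,a}(r)$ and $R_{1,a}(r)$ attached to them are $O(r^2)$ by (iii) and, combined with the uniform bound on $|\beta_0^a|,|\beta_1^a|$ from (ii), contribute errors of sizes $O(r^{\alpha+2})$ and $O(r^{3-\alpha})$, which I absorb into $\mathcal{R}_a$. The remaining tail
\[
\sum_{j \in \Z\setminus\{0,1\}} r^{|\alpha-j|}\beta_j^a (1+R_{j,a}(r))e^{ijt}
\]
is bounded in modulus, using (ii)--(iii), by $\mathrm{const}\sum_{j \neq 0,1} r^{|\alpha-j|}$; the elementary identity $\min_{j \in \Z\setminus\{0,1\}}|\alpha-j| = 1+\sqrt{\mu_1}$, realized at $j=-1$ (if $\alpha<1/2$) or $j=2$ (if $\alpha>1/2$), together with the geometric decay of $r^{|\alpha-j|}$ for $r \in (0,R)\subset(0,1)$, yields $|\mathcal{R}_a(r,t)| \leq \mathrm{const}\,r^{1+\sqrt{\mu_1}}$ with a constant independent of $a,r,t$.

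For \eqref{eq:14}, the argument is strictly analogous but applied to the series in (v). Using $|\alpha-0|=\alpha-0=\alpha$, the $j=0$ term equals $\frac{1}{\sqrt{2\pi}} r^{\alpha-1} \beta_0^a \alpha (i\boldsymbol{\nu}(t)+\boldsymbol{\tau}(t))$, while using $|\alpha-1|=1-\alpha$ and $\alpha-1=-(1-\alpha)$, the $j=1$ term equals $\frac{1}{\sqrt{2\pi}} r^{-\alpha}\beta_1^a (1-\alpha)(i\boldsymbol{\nu}(t)-\boldsymbol{\tau}(t))e^{it}$. The attached factors $\widetilde R_{0,a},R_{0,a},\widetilde R_{1,a},R_{1,a}$ produce errors of orders $O(r^{\alpha+1})$ and $O(r^{2-\alpha})$, both dominated by $r^{\sqrt{\mu_1}}$. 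The tail sum over $j \in \Z\setminus\{0,1\}$ is controlled, again by (ii)--(iii), by $\mathrm{const}\sum_{j \neq 0,1}|\alpha-j|\, r^{|\alpha-j|-1}$; since $\min_{j \neq 0,1}(|\alpha-j|-1) = \sqrt{\mu_1}$ and the additional linear factor $|\alpha-j|$ does not destroy geometric convergence, this produces $|\widetilde{\mathcal{R}}_a(r,t)| \leq \mathrm{const}\,r^{\sqrt{\mu_1}}$ uniformly in $a,r,t$.

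The one point requiring attention, rather than a genuine obstacle, is the uniformity of all constants with respect to the moving pole $a$; this is automatic because every bound I invoke from (ii) and (iii) is stated independently of $a$ and $j$ for $|a| \leq \rho_0$. Thus the corollary reduces to the arithmetic identification of the exponents $1+\sqrt{\mu_1}$ and $\sqrt{\mu_1}$ and to routine bookkeeping of the two leading modes.
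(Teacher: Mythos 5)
Your proposal is correct and follows essentially the same route as the paper: both read off the $j=0$ and $j=1$ modes from Lemma \ref{lemma:stimbe} (iv)--(v), absorb the $O(r^2)$ correction factors into the remainder, and bound the tail using the uniform bounds of (ii)--(iii) together with the observation that $|\alpha-j|\ge 1+\sqrt{\mu_1}$ for $j\in\Z\setminus\{0,1\}$. The arithmetic identification $\min_{j\neq 0,1}|\alpha-j|=1+\sqrt{\mu_1}$ and the uniformity in $a$ are handled exactly as in the paper's proof.
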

\begin{proof}
  From part (iv) of Lemma \ref{lemma:stimbe} we have that 
\begin{equation*}
\varphi_n^a(a + r (\cos t,\sin t))= \frac1{\sqrt{2\pi}}
\left(\beta_0^a
  r^{\alpha} + \beta_1^a
  r^{1-\alpha} e^{it}\right)
+\mathcal R_a(r,t), \quad r\in(0,1),\ t\in[0,2\pi],
\end{equation*}
where 
\[
\mathcal R_a(r,t)
=\frac1{\sqrt{2\pi}}
\left(\beta_0^a
  r^{\alpha} R_{0,a}(r)+ \beta_1^a
  r^{1-\alpha} R_{1,a}(r)e^{it}
\right)+ \frac1
{\sqrt{2\pi}}\sum_{\substack{ j \in \Z \, : \\ |\alpha-j|>1} } 
\beta_j^a
  r^{|\alpha-j|}\big(1+R_{j,a}(r)\big)e^{ijt}.
\]
Let us fix $R\in(0,1)$. Estimates (ii)--(iii)  of Lemma \ref{lemma:stimbe} imply that, for
some $\textrm{const\,}>0$ independent of $a,r,t$ (possibly varying
from line to line), 
\begin{align*}
|\mathcal R_a(r,t)|&\leq
\textrm{const\,}\bigg(r^{\alpha+2}+r^{3-\alpha}
+\sum_{  \substack{j \in \Z \, :\\  |\alpha-j|\geq 1+\sqrt{\mu_1}}  }
  r^{|\alpha-j|}\bigg)\\
& \leq
\textrm{const\,}r^{1+\sqrt{\mu_1}},
\end{align*}
for all $r\in(0,R)$,
thus proving \eqref{eq:3}.  

  From part (v) of Lemma \ref{lemma:stimbe} we have that 
\begin{multline*}
(i\nabla+A_a)\varphi_n^a(a + r (\cos t,\sin t))\\ = \frac\alpha{\sqrt{2\pi}}
\beta_0^a
  r^{\alpha-1}\Big(i{\boldsymbol{\nu}}(t)+
 {\boldsymbol{\tau}}(t)\Big) + \frac{1-\alpha}
{\sqrt{2\pi}}\beta_1^a
  r^{-\alpha}\Big(i{\boldsymbol{\nu}}(t)-
  {\boldsymbol{\tau}}(t)\Big) e^{it}
+\widetilde{\mathcal R}_a(r,t)
\end{multline*}
where 
\begin{align*}
\widetilde{\mathcal R}_a(r,t)
&=\frac\alpha{\sqrt{2\pi}}
\beta_0^a
  r^{\alpha-1}\Big(i\widetilde R_{0,a}(r){\boldsymbol{\nu}}(t)+
  R_{0,a}(r) 
{\boldsymbol{\tau}}(t)\Big)\\
&\quad + \frac{1-\alpha}
{\sqrt{2\pi}}\beta_1^a
  r^{-\alpha}\Big(i\widetilde R_{1,a}(r){\boldsymbol{\nu}}(t)-
   R_{1,a}(r)
{\boldsymbol{\tau}}(t)\Big) e^{it}\\
&\quad+ \frac1
{\sqrt{2\pi}}\sum_{  \substack{j \in \Z \, :\\   |\alpha-j|>1}  }
\beta_j^a
  r^{|\alpha-j|-1}\Big(i|\alpha-j|(1+\widetilde R_{j,a}(r)){\boldsymbol{\nu}}(t)+(\alpha-j)
  ( 1 + R_{j,a}(r)) 
{\boldsymbol{\tau}}(t)\Big) e^{ijt}.
\end{align*}
From Lemma \ref{lemma:stimbe} (ii)--(iii) we have that, for all $r\in(0,R)$,
\begin{align*}
|\widetilde{\mathcal R}_a(r,t)|&\leq
\textrm{const\,}\left(r^{\alpha+1}+r^{2-\alpha}
+\sum_{  \substack{j \in \Z \, :\\  |\alpha-j|\geq 1+\sqrt{\mu_1}}  }
  |\alpha-j| r^{|\alpha-j|-1}\right)\\
& \leq
\textrm{const\,}r^{\sqrt{\mu_1}}
\end{align*}
 for
some $\text{\rm const\,}>0$ independent of $a,r,t$ (possibly varying
from line to line), thus proving \eqref{eq:14}.  
\end{proof}

We now describe some consequences of Lemma \ref{lemma:stimbe} and
Corollary \ref{c:cor1}, which will be needed in section
\ref{sec:monotonicity} to prove a monotonicity type formula.

\begin{Lemma}\label{lemma:estep}
  Under the same assumptions and with the same notation as in Lemma \ref{lemma:stimbe}, we have
  that 
\begin{multline*}
\lim_{\eps \to 0^+} \left\{ \bigg|\frac12\int_{\partial D_\eps(a)} 
| ( i \nabla + A_a ) \varphi_n^a |^2 x \cdot \nu \, ds \bigg|
+ \bigg| \int_{\partial D_\eps(a)} ( i \nabla + A_a ) \varphi_n^a \cdot \nu 
\, \overline{ ( i \nabla + A_a ) \varphi_n^a \cdot x} \, ds \bigg| \right\}\\
\leq  2\alpha(1-\alpha)|a| |\beta_0^a||\beta_1^a|.
\end{multline*}
\end{Lemma}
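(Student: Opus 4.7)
The plan is to parametrize $\partial D_\eps(a)$ as $x=a+\eps\boldsymbol{\nu}(t)$ for $t\in[0,2\pi]$, so that $ds=\eps\,dt$, $\nu=\boldsymbol{\nu}(t)$, and $x\cdot\nu=a\cdot\boldsymbol{\nu}(t)+\eps$, and to substitute the expansion of $(i\nabla+A_a)\varphi_n^a$ furnished by Corollary \ref{c:cor1}:
\[
(i\nabla+A_a)\varphi_n^a\bigl(a+\eps\boldsymbol{\nu}(t)\bigr)=L(\eps,t)+\widetilde{\mathcal R}_a(\eps,t),
\]
where $L$ is the explicit two-term leading part of \eqref{eq:14} and $|\widetilde{\mathcal R}_a|\le{\rm const\,}\eps^{\sqrt{\mu_1}}$. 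Setting $\mathbf{U}_0=i\boldsymbol{\nu}+\boldsymbol{\tau}$ and $\mathbf{U}_1=i\boldsymbol{\nu}-\boldsymbol{\tau}$, the key algebraic identities
\[
\mathbf{U}_0\cdot\overline{\mathbf{U}_0}=\mathbf{U}_1\cdot\overline{\mathbf{U}_1}=2,\qquad
\mathbf{U}_0\cdot\overline{\mathbf{U}_1}=0,\qquad \mathbf{U}_0\cdot\boldsymbol{\nu}=\mathbf{U}_1\cdot\boldsymbol{\nu}=i
\]
will be used throughout, together with the pointwise bound $|L|\le{\rm const\,}\eps^{-\sqrt{\mu_2}}$ where $\sqrt{\mu_2}=1-\sqrt{\mu_1}$.

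For the first integral, the vanishing of $\mathbf{U}_0\cdot\overline{\mathbf{U}_1}$ forces $|L|^2$ to be independent of $t$, namely $|L|^2=\tfrac{1}{\pi}\bigl[\alpha^2|\beta_0^a|^2\eps^{2\alpha-2}+(1-\alpha)^2|\beta_1^a|^2\eps^{-2\alpha}\bigr]$. Since $\int_0^{2\pi}a\cdot\boldsymbol{\nu}(t)\,dt=0$, the $|L|^2$-contribution to $\tfrac12\int|(i\nabla+A_a)\varphi_n^a|^2\,x\cdot\nu\,ds$ is $\pi\eps^2|L|^2=O(\eps^{2\alpha}+\eps^{2-2\alpha})\to 0$ as $\eps\to 0^+$. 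The remaining cross and quadratic contributions involving $\widetilde{\mathcal R}_a$ are bounded by $\eps\cdot\eps^{-\sqrt{\mu_2}}\cdot\eps^{\sqrt{\mu_1}}(|a|+\eps)=O(\eps^{2\sqrt{\mu_1}}(|a|+\eps))$ and also vanish. Thus the first integral tends to $0$.

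For the second integral I split $L\cdot x=L\cdot a+\eps\,L\cdot\nu$, so that $(L\cdot\nu)\overline{L\cdot x}=(L\cdot\nu)\overline{L\cdot a}+\eps|L\cdot\nu|^2$; the second summand integrates to $\alpha^2|\beta_0^a|^2\eps^{2\alpha}+(1-\alpha)^2|\beta_1^a|^2\eps^{2-2\alpha}\to 0$. Writing $a=|a|(\cos\vartheta,\sin\vartheta)$, a direct computation gives $i\boldsymbol{\nu}(t)\cdot a\pm\boldsymbol{\tau}(t)\cdot a=i|a|e^{\pm i(t-\vartheta)}$, hence
\[
L\cdot a=\tfrac{i|a|}{\sqrt{2\pi}}\Bigl[\alpha\beta_0^a\eps^{\alpha-1}e^{i(t-\vartheta)}+(1-\alpha)\beta_1^a\eps^{-\alpha}e^{i\vartheta}\Bigr].
\]
Expanding $(L\cdot\nu)\,\overline{L\cdot a}$ into four products, the two carrying $e^{\pm i(t-\vartheta)}$ vanish upon integration in $t\in[0,2\pi]$, whereas the two $t$-independent off-diagonal terms combine, after multiplication by $\eps$ from $ds$, to yield exactly
\[
\int_{\partial D_\eps(a)}(L\cdot\nu)\,\overline{L\cdot a}\,ds=2|a|\alpha(1-\alpha)\,\Re\bigl(\beta_0^a\overline{\beta_1^a}\,e^{-i\vartheta}\bigr),
\]
which is $\eps$-independent. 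The mixed contributions from $\widetilde{\mathcal R}_a$ are estimated exactly as for the first integral and vanish in the limit.

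Combining the two statements and bounding $|\Re(\beta_0^a\overline{\beta_1^a}e^{-i\vartheta})|\le|\beta_0^a||\beta_1^a|$ yields the asserted inequality $2\alpha(1-\alpha)|a||\beta_0^a||\beta_1^a|$. The delicate point is the bookkeeping of the $\widetilde{\mathcal R}_a$-terms: the worst-case cross bound $|L||\widetilde{\mathcal R}_a|\le{\rm const\,}\eps^{\sqrt{\mu_1}-\sqrt{\mu_2}}$ has a negative exponent, and it is only the extra factor $\eps$ from $ds=\eps\,dt$, combined with the identity $\sqrt{\mu_1}+\sqrt{\mu_2}=1$, that produces the vital $\eps^{2\sqrt{\mu_1}}$ and forces every error contribution to $0$.
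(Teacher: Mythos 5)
Your proof is correct and follows essentially the same route as the paper's: substitute the two-term expansion \eqref{eq:14} from Corollary \ref{c:cor1}, observe that the leading part of $|(i\nabla+A_a)\varphi_n^a|^2$ is $t$-independent so the first integral vanishes in the limit, extract the $\eps$-independent cross term $2\alpha(1-\alpha)\Re\big(\beta_0^a\overline{\beta_1^a}(a_1-ia_2)\big)$ from the second integral, and absorb all remainders via $ds=\eps\,dt$ and $\sqrt{\mu_1}+\sqrt{\mu_2}=1$. Your computations (the identities for $\mathbf{U}_0,\mathbf{U}_1$, the value of $L\cdot a$, and the error bookkeeping) all check out against the paper's.
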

\begin{proof}
Let $R\in(0,1)$ fixed. From \eqref{eq:14} we have that, for all $r\in(0,R)$, 
\begin{align*}
|(i\nabla +A_a)\varphi_n^a(a+r(\cos t,r\sin t))|^2 
& = r^{2 (\alpha - 1)} |\beta_0^a|^2 \frac{\alpha^2}{\pi} 
+ r^{-2\alpha} |\beta_1^a|^2 \frac{(1-\alpha)^2}{\pi} + \widehat{\mathcal R}_a(r,t)
\end{align*}
where $|\widehat{\mathcal R}_a(r,t)| \leq  \textrm{const\,}r^{2\sqrt{\mu_1}-1}$ for 
some $\textrm{const\,}>0$ independent of $a,r,t$. It follows that 
\[
\lim_{\eps\to 0^+}\int_{\partial
  D_\eps(a)}|(i\nabla+A_a)\varphi_n^a|^2 x\cdot\nu\,ds = 0.
\]
Moreover, from \eqref{eq:14} we have that 
\begin{align*}
&  (i\nabla+A_a)\varphi_n^a(a+\eps(\cos t,\sin t))\cdot
{\boldsymbol{\nu}}(t)\overline{(i\nabla+A_a)\varphi_n^a(a+\eps(\cos
  t,\sin t))\cdot (a+\eps {\boldsymbol{\nu}}(t))} \\
& = \eps^{2(\alpha-1)} |\beta_0^a|^2 \frac{\alpha^2}{2 \pi} 
( \boldsymbol{\nu}(t) + i  \boldsymbol{\tau}(t) )\cdot a + \eps^{-2 \alpha} |\beta_1^a|^2 \frac{(1-\alpha)^2}{2\pi} 
( \boldsymbol{\nu}(t) - i \boldsymbol{\tau}(t) ) \cdot a \\
& + 2 \eps^{-1} \mathfrak{Re} \left( \beta_0^a \overline{\beta_1^a} e^{-it} \frac{\alpha (1 - \alpha)}{2\pi} 
( \boldsymbol{\nu}(t) - i \boldsymbol{\tau}(t) )\cdot a \right)
+ O(\eps^{2\sqrt{\mu_1}-1})
\end{align*}
as $\eps\to0^+$, and hence, taking into account that $\int_0^{2\pi}a\cdot
{\boldsymbol{\nu}}(t)\,dt=\int_0^{2\pi}a\cdot
{\boldsymbol{\tau}}(t)\,dt=0$, we obtain
\[
\lim_{\eps\to 0^+}\int_{\partial D_\eps(a)} (i\nabla+A_a)\varphi_a \cdot \nu
\overline{(i\nabla+A_a)\varphi_a\cdot x}\,ds = 2 \alpha (1- \alpha)  
\mathfrak{Re} ( \beta_0^a \overline{\beta_1^a} (a_1 - ia_2) )
\]
from which the conclusion follows.
\end{proof}

\begin{Lemma} \label{lemma:pre-lunette}
For $n\geq1$ fixed and $a$ varying in $\Omega$, let $\varphi_n^a\in
H^{1,a}_{0}(\Omega,\C)\setminus\{0\}$ satisfy
\eqref{eq_eigenfunction_n} and \eqref{eq:23_n}. Let us assume that
$\varphi_n^a\to \varphi_n^0$ in $L^2(\Omega,\C)$ as $a\to0$ (or
respectively 
along a sequence $a_\ell\to0$). Let
  $k \in \mathbb{Z}$ be such that $|\alpha - k|$ is the order of
  vanishing of $\varphi_n^0$ at $0$. 
For all $j \in \Z$ and $a \in \Omega$, let $v_j^a$ be as in
\eqref{eq:def_vja} and $\beta_j^a$ be as in \eqref{eq:beta-2}.
Then there exists $\rho_0>0$ such that, for all $a$  with
$|a|\leq\rho_0$ (respectively for $a=a_\ell$ with $\ell$ sufficiently large),  the following properties hold.
\begin{enumerate}[\rm (i)]
\item For all $j \in \Z$, $\beta_j^a \to \beta_j^0$ as $a\to0$ (respectively 
along the sequence $a_\ell\to0$).
\item There holds
\begin{align*}
\int_0^{2\pi}&|\varphi_n^a( a + r(\cos t,\sin t))|^2\,dt\\
& =\bigg( \sum_{\substack{ j \in \Z \, : \\ |\alpha-j| < |\alpha-k| } } 
r^{2|\alpha-j|} |\beta_j^a|^2 | 1 + R_{j,a}(r) |^2\bigg)
+ r^{2|\alpha-k|} |\beta_k^a|^2 (1+\widehat R_a(r)),
\end{align*}
where $|\widehat R_{a}(r)|\leq  h(r)$ 
for some function $h(r)$
independent of $a$ such that $h(r)\to0$ as $r \to 0^+$, 
and
\begin{align*}
\varphi_n^a&( a + r(\cos t,\sin t)) \\
&=\frac1{\sqrt{2\pi}}\bigg( \sum_{\substack{ j \in \Z \, : \\ |\alpha-j| < |\alpha-k| } } 
r^{|\alpha-j|} \beta_j^a ( 1 + R_{j,a}(r) ) e^{ijt} \bigg)
+ \frac1{\sqrt{2\pi}} r^{|\alpha-k|} \beta_k^a \left( e^{ikt} + R_{a}(r,t) \right),
\end{align*}
where $|R_{j,a}(r)| \leq  {\rm const\,}r^2$ for some ${\rm const} > 0$ independent 
of $j$ and $a$, and $|R_{a}(r,t)|\leq  f(r)$ for some function $f(r)$
independent of $a$ and $t$
such that $f(r)\to0$ as $r \to 0^+$. 
\item Let ${\boldsymbol{\nu}}(t)=(\cos t,\sin t)$ and ${\boldsymbol{\tau}}(t)=(-\sin t,\cos t)$. 
There holds
\begin{align*}
\int_0^{2\pi}&|(i\nabla+A_a)\varphi_n^a( a + r(\cos t,\sin t))|^2\,dt
 \\&=\bigg(\sum_{\substack{ j \in \Z \, : \\ |\alpha-j| < |\alpha-k| } } 
r^{2|\alpha-j|-2} |\beta_j^a|^2 |\alpha-j|^2\Big(| 1 + R_{j,a}(r)
     |^2+| 1 + \widetilde R_{j,a}(r) |^2\Big)\bigg)\\
&\quad 
+r^{2|\alpha-k|-2} |\beta_k^a|^2 |\alpha-k|^2(1+\widetilde R_a(r))
\end{align*}
where $|\widetilde R_{a}(r)|\leq p (r)$ 
for some function $p(r)$
independent of $a$ such that $p(r)\to0$ as $r \to 0^+$, 
and

\begin{align*}
(i\nabla + A_a) \varphi_n^a & ( a + r (\cos t,\sin t) ) \\
& = \frac{1}{\sqrt{2\pi}} \sum_{\substack{ j \in \Z \, : \\ |\alpha-j| < |\alpha-k| } } 
r^{|\alpha-j|-1} \beta_j^a \Big( i |\alpha-j| {\boldsymbol{\nu}}(t) 
+ (\alpha -j) {\boldsymbol{\tau}}(t) 
+ {\mathbf R}_{j,a}(r) \Big) e^{ijt} \\
&\qquad + \frac{1}{\sqrt{2\pi}} r^{|\alpha-k|-1} \beta_k^a \bigg( \Big( i |\alpha-k| {\boldsymbol{\nu}}(t) 
+ (\alpha - k) {\boldsymbol{\tau}}(t)\Big) e^{ikt}
+ {\widetilde{\mathbf R}}_{a}(r,t) \bigg)
\end{align*}
where $|{\mathbf R}_{j,a}(r,t)| \leq \textrm{const
}r^2$ for some positive constant  $\textrm{const }>0$ 
independent of $j$ and $a$ and $|{\widetilde{\mathbf R}}_{a}(r,t)|\leq  g(r)$ for some function 
$g(r)$ independent of  $a$ and $t$ such that $g(r) \to 0$ as $r \to
0^+$. 
\end{enumerate}

\end{Lemma}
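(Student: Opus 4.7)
The plan is to combine the detailed Fourier expansions of $\varphi_n^a$ and $(i\nabla+A_a)\varphi_n^a$ provided by Lemma \ref{lemma:stimbe}, which already give uniform-in-$a$ control of the coefficients $\beta_j^a$ and of the remainders $R_{j,a},\widetilde R_{j,a}$, with two new ingredients supplied by the convergence hypothesis $\varphi_n^a\to\varphi_n^0$: the continuity $\beta_j^a\to\beta_j^0$, and the non-degeneracy $\beta_k^0\neq 0$. This last point follows from the characterization \eqref{eq:5} of the vanishing order of $\varphi_n^0$ at $0$ combined with the expansion of Lemma \ref{lemma:stimbe}(iv) at $a=0$, and it is precisely what makes the factorization of $\beta_k^a$ in the $j=k$ mode meaningful in the statements of (ii) and (iii).

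To prove part (i), I use the $R$-independent representation \eqref{eq:beta-2} with a fixed $R\in(0,1)$. The interior regularity estimate \eqref{eq:13} applied on an annulus through $\partial D_R(0)$ yields $\varphi_n^a(a+R(\cos t,\sin t))\to\varphi_n^0(R(\cos t,\sin t))$ uniformly in $t$, hence $v_j^a(R)\to v_j^0(R)$; the same argument gives pointwise convergence $v_j^a(s)\to v_j^0(s)$ for each $s>0$. To pass to the limit in the integral term of \eqref{eq:beta-2}, I invoke the uniform dominating bound $|v_j^a(s)|\leq C|\beta_j^a|\,s^{|\alpha-j|}\leq CB\,s^{|\alpha-j|}$ from Lemma \ref{lemma:stimbe}(ii)--(iii), which makes the integrand integrable near $s=0$, and apply dominated convergence. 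Combined with the continuity of $a\mapsto\lambda_n^a$, this yields $\beta_j^a\to\beta_j^0$.

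For parts (ii) and (iii), the key structural observation is that the set $\{|\alpha-j|:j\in\Z\}$ is discrete in $(0,\infty)$, so there is a strict gap $\delta>0$ between $|\alpha-k|$ and the nearest other value. I expand $|\varphi_n^a|^2$ (respectively $|(i\nabla+A_a)\varphi_n^a|^2$) using Lemma \ref{lemma:stimbe}(iv) (respectively (v)) and integrate in $t$ via Parseval, where in (iii) the pointwise identity $\boldsymbol{\nu}(t)\cdot\boldsymbol{\tau}(t)=0$ decouples radial and tangential components and kills cross terms between distinct Fourier modes. I then split the sum into indices with $|\alpha-j|<|\alpha-k|$ (finitely many, kept explicit), the mode $j=k$, and the tail $|\alpha-j|>|\alpha-k|$. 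Using $|\beta_j^a|\leq B$ from Lemma \ref{lemma:stimbe}(ii) and the geometric decay $r^{2(|\alpha-j|-|\alpha-k|)}\leq r^{2\delta}$ for any tail index, the tail is bounded by $C r^{2|\alpha-k|+2\delta}$ (or $C r^{|\alpha-k|+\delta}$ in the pointwise expansions), with constants independent of $a$ (a mild polynomial growth in $|\alpha-j|$ in (iii) is absorbed by the geometric decay). After factoring out $r^{2|\alpha-k|}|\beta_k^a|^2$ (respectively $r^{|\alpha-k|}\beta_k^a$), and using that $|\beta_k^a|$ is bounded away from zero uniformly for $|a|$ small by part (i) and $\beta_k^0\neq 0$, the $j=k$ contribution, which carries an $O(r^2)$ correction coming from $R_{k,a}$ and $\widetilde R_{k,a}$, absorbs together with the normalized tail into the remainders $\widehat R_a,R_a,\widetilde R_a,\mathbf R_{j,a},\widetilde{\mathbf R}_a$, bounded by $a$-independent functions of $r$ vanishing as $r\to 0^+$.

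The main obstacle is not a single delicate estimate but the bookkeeping: every bound inherited from Lemma \ref{lemma:stimbe} must be used with its uniformity in both $a$ \emph{and} $j$, and the spectral gap $\delta$ between $|\alpha-k|$ and its neighbours must be exploited quantitatively to sum the tail while producing a constant independent of $a$. The non-degeneracy $\beta_k^0\neq 0$ is what ultimately makes the normalization by $\beta_k^a$ in the $j=k$ term harmless.
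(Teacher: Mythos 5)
Your proposal is correct and follows essentially the same route as the paper: part (i) via the $R$-independent formula \eqref{eq:beta-2}, the uniform bound $|v_j^a(s)|\leq CB\,s^{|\alpha-j|}$ from Lemma \ref{lemma:stimbe}(ii)--(iii), the $C^2_{\rm loc}(\Omega\setminus\{0\})$ convergence of the eigenfunctions, and dominated convergence; parts (ii)--(iii) via Parseval, the splitting of modes into $|\alpha-j|<|\alpha-k|$, $j=k$, and the tail, with the tail absorbed into the $j=k$ remainder thanks to $\inf_{|a|\leq\rho_0}|\beta_k^a|>0$ (which follows from (i) and $\beta_k^0\neq0$) together with the gap $r^{2|\alpha-j|-2|\alpha-k|}\leq r^{2\delta}$. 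No gaps.
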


\begin{proof}
In order to prove statement (i), we notice that
\eqref{eq:beta-2} evaluated at $R=1$ provides
\begin{equation}\label{eq:11}
\beta_j^a  = v_j^a(1) + 
\frac{\lambda_n^a}{2 |\alpha-j|} \int_0^1 \big( s^{1-|\alpha-j|} 
- s^{ 1 + |\alpha-j| } \big) v_j^a(s) \,ds.
\end{equation}
From  Lemma
\ref{lemma:stimbe} (statements (ii) and (iii)) it follows that, for
$|a|\leq\rho_0$ with $\rho_0>0$ sufficiently small,
\begin{equation}\label{eq:22}
|v_j^a(r)|\leq C' r^{|\alpha-j|} \quad \text{ for all } r \in (0,1] \text{ and }j\in\Z,
\end{equation}
for some constant $C'>0$ independent of $j$, $a$, and $r$. 
Moreover \eqref{eq_eigenfunction_n}, \eqref{eq:23_n}, the convergence
$\varphi_n^a\to \varphi_n^0$ in $L^2(\Omega,\C)$, and 
standard elliptic estimates (see e.g. \cite[Theorem
8.10]{gilbarg-trudinger}), imply that 
\begin{equation}\label{eq:2}
\varphi_n^a \to \varphi_n^0 \quad \text {in } H^1(\Omega,\C) 
\text{ and } C^2_{\rm loc}(\Omega\setminus\{0\},\C), \quad\text{as  $a\to0$ (or
along the sequence $a_\ell\to0$)}.
\end{equation}
From \eqref{eq:11},  \eqref{eq:22}, \eqref{eq:2},  and the Dominated
Convergence Theorem we obtain that, for all $j\in\Z$,
\begin{align*}
& \lim_{a\to 0}\beta_j^a 
= 
v_j^0(1) + 
\frac{\lambda_n^0}{2 |\alpha-j|} \int_0^1 \big( s^{1-|\alpha-j|} 
- s^{ 1 + |\alpha-j| } \big) v_j^0(s) \,ds
= \beta_j^0,
\end{align*}
thus proving (i).

If 
  $k \in \mathbb{Z}$ is such that $|\alpha - k|$ is the order of
  vanishing of $\varphi_n^0$ at $0$, from Lemma \ref{lemma:stimbe} (iii) it follows that $\beta_k^0\neq0$ 
and $\beta_j^0=0$ for all $j\in\Z$ such that $|\alpha-j|<|\alpha-k|$;
in particular, in view of (i), we have that $\lim_{a\to
  0}\beta_k^a\neq0$ and hence $\inf_{|a|\leq\rho_0}|\beta_k^a|>0$ for $\rho_0$ sufficiently small.
Then, from Lemma \ref{lemma:stimbe} (iv) and the
Parseval identity we deduce that
\begin{align*}
\int_0^{2\pi}&|\varphi_n^a( a + r(\cos t,\sin t))|^2\,dt
 =\sum_{  j \in \Z}
r^{2|\alpha-j|} |\beta_j^a|^2 | 1 + R_{j,a}(r) |^2\\
&=
\left( \sum_{  \substack{j \in \Z \, :\\   |\alpha-j|<|\alpha-k|}  }
r^{2|\alpha-j|} |\beta_j^a|^2 | 1 + R_{j,a}(r) |^2\right)
+ r^{2|\alpha-k|} |\beta_k^a|^2 (1+\widehat R_a(r)),
\end{align*}
with 
\[
\widehat R_a(r)=|R_{k,a}(r) |^2+2 \mathfrak{Re}(R_{k,a}(r) )+\sum_{  \substack{j \in \Z \, :\\   |\alpha-j|>|\alpha-k|}  }
\frac{|\beta_j^a|^2}{|\beta_k^a|^2} r^{2|\alpha-j|-2|\alpha-k|} | 1 + R_{j,a}(r) |^2,
\]
so that the first estimate in (ii) follows from Lemma
\ref{lemma:stimbe} (ii) and (iii). 
From Lemma \ref{lemma:stimbe} (iii) we also deduce that
\[
\frac1{\sqrt{2\pi}}
  \sum_{  \substack{j \in \Z \, :\\   |\alpha-j|\geq|\alpha-k|}  }
r^{|\alpha-j|}\beta_j^a(1+R_{j,a}(r))e^{ijt}=
\frac1{\sqrt{2\pi}}r^{|\alpha-k|}\beta_k^a \left(e^{ikt}+R_{a}(r,t)\right) 
\]
where $|R_{a}(r,t)|\leq  f(r)$ for some function $f(r)$ 
independent of $a$ and $t$ such that $f(r)\to0$ as $r\to0$.  
Then the second estimate in (ii) follows from Lemma \ref{lemma:stimbe}
(iv).

From Lemma \ref{lemma:stimbe} (v) and the
Parseval identity we deduce that
\begin{align*}
\int_0^{2\pi}&|(i\nabla+A_a)\varphi_n^a( a + r(\cos t,\sin t))|^2\,dt
 \\&=\sum_{  j \in \Z}
r^{2|\alpha-j|-2} |\beta_j^a|^2 |\alpha-j|^2\Big(| 1 + R_{j,a}(r) |^2+| 1 + \widetilde R_{j,a}(r) |^2\Big)
\end{align*}
so that the first estimate in (iii) follows from Lemma
\ref{lemma:stimbe} (ii) and (iii) arguing as above. 
In a similar way, the second  estimate in (iii) follows from statements (iii) and (v) of Lemma \ref{lemma:stimbe}.
\end{proof}

\begin{remark}\label{rem:van_ord}
  In the particular case $n=n_0$ with $n_0$ such that \eqref{eq:1}
  holds, the above lemma applies to the family of eigenfunctions
  $\varphi_a=\varphi_{n_0}^a$ satisfying \eqref{eq:equation_a} and
  \eqref{eq:normalization}. Indeed, in this case
  \eqref{eq:convergence-varphi-a-1} holds, i.e. the eigenfunctions
  $\varphi_a$ converge as $a\to0^+$, so that the assumptions of Lemma
  \ref{lemma:pre-lunette} are fulfilled.  In particular we deduce
  that, if $\varphi_0$ satisfies \eqref{eq:equation_lambda0},
  \eqref{eq:37}, and \eqref{eq:131} and if $\varphi_a$ is as in
  \eqref{eq:equation_a}--\eqref{eq:normalization}, then, for $a$
  sufficiently close to $0$, the vanishing order of $\varphi_a$ is
  less or equal than the vanishing order of $\varphi_0$.
\end{remark}

\begin{Lemma} \label{lemma:prepre-lunette}
For $n\geq1$ fixed and $a$ varying in $\Omega\setminus\{0\}$, let $\varphi_n^a\in
H^{1,a}_{0}(\Omega,\C)\setminus\{0\}$ satisfy
\eqref{eq_eigenfunction_n} and \eqref{eq:23_n}. Then there exist
$\sigma>0$ and $C>0$ such that, for all $R>1$ and $a\in\Omega$ such
that $0<|a|<\frac{\sigma}{R}$,
\[
\frac1{|a|}\int_{D_{(R+1)|a|}(a) \setminus D_{R|a|}(a)}|\varphi_n^a|^2\,dx\leq C \int_{\partial
  D_{R|a|}(a)} |\varphi_n^a|^2 \,ds
\]
and
\[
\int_{D_{(R+1)|a|}(a) \setminus D_{R|a|}(a)}|(i\nabla+A_a)\varphi_n^a|^2\,dx\leq \frac{C}{R^2|a|} \int_{\partial
  D_{R|a|}(a)} |\varphi_n^a|^2 \,ds.
\]
\end{Lemma}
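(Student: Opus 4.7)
The strategy is to pass to polar coordinates centered at $a$ and exploit the Fourier expansions established in Lemma~\ref{lemma:stimbe}. With $v_j^a$ as in \eqref{eq:def_vja}, Parseval's identity yields
\[
\int_{\partial D_{R|a|}(a)}|\varphi_n^a|^2\,ds = R|a|\sum_{j\in\Z}|v_j^a(R|a|)|^2,\quad \int_{D_{(R+1)|a|}(a)\setminus D_{R|a|}(a)}|\varphi_n^a|^2\,dx = \int_{R|a|}^{(R+1)|a|}r\sum_{j\in\Z}|v_j^a(r)|^2\,dr,
\]
while the analogous identities for $|(i\nabla+A_a)\varphi_n^a|^2$ follow from part (v) of the same lemma together with the orthonormality of the moving frame $\{\boldsymbol{\nu}(t),\boldsymbol{\tau}(t)\}$: the $t$-integral of $|(i\nabla+A_a)\varphi_n^a|^2$ reduces to $\sum_j\bigl(|(v_j^a)'(r)|^2+(\alpha-j)^2|v_j^a(r)|^2/r^2\bigr)$ up to controlled perturbations.

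The first step is to choose $\sigma>0$ small enough that on $(0,2\sigma]$ the two remainders $R_{j,a}(r)$ and $\widetilde R_{j,a}(r)$ from Lemma~\ref{lemma:stimbe}(iii) are bounded by $1/2$ uniformly in $j$ and $a$. This produces clean two-sided comparisons (with universal constants between $1/4$ and $9/4$) of the form $|v_j^a(r)|^2\approx |\beta_j^a|^2 r^{2|\alpha-j|}$ and $|(v_j^a)'(r)|^2\approx |\alpha-j|^2|\beta_j^a|^2 r^{2|\alpha-j|-2}$, valid throughout the annulus. After this reduction the two inequalities reduce to comparing, termwise, $\int_{R|a|}^{(R+1)|a|}r^{1+2|\alpha-j|}\,dr$ with $R|a|\cdot(R|a|)^{2|\alpha-j|}$ for the first inequality, and $\int_{R|a|}^{(R+1)|a|}r^{2|\alpha-j|-1}\,dr$ with $(R|a|)^{2|\alpha-j|}/R^2$ for the second, then summing the resulting estimates against $|\beta_j^a|^2$. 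A direct application of the mean value theorem gives, for the first inequality, a termwise ratio of order $|a|\bigl(1+\tfrac{1}{R}\bigr)^{2|\alpha-j|}$.

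The main obstacle is that the factor $\bigl(1+\tfrac1R\bigr)^{2|\alpha-j|}$ is not uniformly bounded in $j$, so a crude termwise estimate cannot give a constant independent of $R$ and $a$. To overcome this I would split the series $\sum_j$ into the low modes $|\alpha-j|\leq R$, on which $\bigl(1+\tfrac1R\bigr)^{2|\alpha-j|}\leq e^2$ is absorbed directly into a universal constant, and the high modes $|\alpha-j|>R$, where the smallness hypothesis $R|a|<\sigma$ forces $((R+1)|a|)^{2|\alpha-j|}\leq (2\sigma)^{2|\alpha-j|}$ to decay geometrically in $|\alpha-j|$. Combined with the uniform bound $|\beta_j^a|\leq B$ from Lemma~\ref{lemma:stimbe}(ii), the high-mode tail sums as a convergent geometric series, which is controlled by a single low mode on the right-hand side after possibly shrinking $\sigma$. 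The magnetic-gradient inequality follows by exactly the same template; the extra factor $(\alpha-j)^2/r^2\leq (\alpha-j)^2/(R|a|)^2$ inside the integrand produces the desired $1/R^2$ on the right-hand side, and the polynomial weight $(\alpha-j)^2$ is harmless against the geometric tail decay.
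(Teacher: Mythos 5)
Your reduction to Fourier modes via Lemma~\ref{lemma:stimbe} and the two-sided comparison $|v_j^a(r)|^2\approx|\beta_j^a|^2 r^{2|\alpha-j|}$ on the annulus are fine, and you have correctly identified the real obstacle: the termwise ratio $(1+\tfrac1R)^{1+2|\alpha-j|}$ is unbounded in $j$. But your proposed resolution of that obstacle does not work. For the high modes $|\alpha-j|>R$ you bound the left-hand side \emph{absolutely} by $B^2(R+1)|a|\sum_{|\alpha-j|>R}(2\sigma)^{2|\alpha-j|}$, and then claim this is ``controlled by a single low mode on the right-hand side.'' That step requires a \emph{positive lower bound} on some coefficient $|\beta_j^a|$ with $|\alpha-j|$ uniformly bounded, and no such bound is available from Lemma~\ref{lemma:stimbe}, which only gives the upper bound $|\beta_j^a|\leq B$. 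If the local mass of $\varphi_n^a$ near $a$ were concentrated in a single mode $j_0$ with $|\alpha-j_0|=N\gg R$, then both sides of the inequality would be comparable to $|\beta_{j_0}^a|^2(R|a|)^{1+2N}$ up to the factor $(1+\tfrac1R)^{1+2N}$, and the desired inequality with a constant independent of $R$ and $a$ would simply be false. So the statement is not a consequence of the Fourier bounds of Lemma~\ref{lemma:stimbe} alone; it genuinely uses the fact that the eigenfunctions converge as $a\to0$ to a limit eigenfunction with a \emph{fixed, finite} vanishing order.

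This is exactly what the paper's proof supplies and your proposal omits. The paper argues by contradiction along sequences $R_\ell|a_\ell|\to0$, extracts $\varphi_n^{a_\ell}\to\varphi_n^0$ in $L^2(\Omega,\C)$, lets $|\alpha-k|$ be the vanishing order of $\varphi_n^0$, and invokes Lemma~\ref{lemma:pre-lunette}: for $a$ near $0$ one has $\inf|\beta_k^a|>0$, the coefficients $\beta_j^a$ with $|\alpha-j|<|\alpha-k|$ tend to $0$, and the entire tail $|\alpha-j|>|\alpha-k|$ is absorbed into a remainder that is $o(1)$ \emph{relative to the $k$-th mode}. This reduces both sides to a finite sum over $\{j:|\alpha-j|\leq|\alpha-k|\}$, on which the exponents are uniformly bounded and your termwise comparison does go through (since $(1+\tfrac1R)^{1+2|\alpha-j|}\leq 2^{1+2|\alpha-k|}$ there). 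To repair your argument you would need to import this compactness step — or some equivalent statement guaranteeing that, for $|a|$ small, a mode with $|\alpha-j|$ uniformly bounded carries a definite fraction of $\int_{\partial D_{R|a|}(a)}|\varphi_n^a|^2\,ds$ — before attempting to absorb the high-mode tail.
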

\begin{proof}
  Let us prove the first estimate arguing by contradiction: assume that there exist sequences
  $R_\ell>1$ and $a_\ell\in\Omega$ such that 
$R_\ell|a_\ell|<\frac1{\ell}$ and 
 \begin{equation}\label{eq:9}
\frac1{|a_\ell|}\int_{D_{(R_\ell+1)|a_\ell|}(a_\ell) \setminus D_{R_\ell|a_\ell|}(a_\ell)}|\varphi_n^{a_\ell}|^2\,dx> \ell \int_{\partial
  D_{R_\ell|a_\ell|}(a_\ell)} |\varphi_n^{a_\ell}|^2 \,ds.
\end{equation}
It is easy to verify that, up to extracting a subsequence,
$\varphi_n^{a_\ell}\to \varphi_n^0$ in $L^2(\Omega,\C)$ as $\ell\to\infty$ for some
$\varphi_n^0\in H^{1,0}_{0}(\Omega,\C)\setminus\{0\}$ satisfying 
\begin{equation}\label{eq:10}
 \begin{cases}
   (i\nabla + A_0)^2 \varphi_n^0 = \lambda_n^0 \varphi_n^0,  &\text{in }\Omega,\\
   \varphi_n^0 = 0, &\text{on }\partial \Omega,\\
  \int_\Omega |\varphi_n^0(x)|^2\,dx=1.
\end{cases}
\end{equation}
Let
  $k \in \mathbb{Z}$ be such that $|\alpha - k|$ is the order of
  vanishing of $\varphi_n^0$ at $0$. Then, from Lemma \ref{lemma:pre-lunette} (first estimate in (ii)) it
follows that, for $\ell$ sufficiently large, 
\begin{align*}
\frac1{|a_\ell|}&\int_{D_{(R_\ell+1)|a_\ell|}(a_\ell) \setminus D_{R_\ell|a_\ell|}(a_\ell)}|\varphi_n^{a_\ell}|^2\,dx=  \frac1{|a_\ell|}\int_{R_\ell|a_\ell|}^{(R_\ell+1)|a_\ell|}r\left(\int_0^{2\pi}|\varphi_n^{a_\ell}( a_\ell
  + r(\cos t,\sin t))|^2\,dt\right)\,dr\\
&\leq
  \frac{2}{|a_\ell|}\int_{R_\ell|a_\ell|}^{(R_\ell+1)|a_\ell|}r\left(
\sum_{  \substack{j \in \Z \, :\\  |\alpha-j| \leq |\alpha-k|}  } 
\!\!\!\!r^{2|\alpha-j|} |\beta_j^{a_\ell}|^2 \right)\,dr\leq {\rm
  const\,}\!\!\!\!
\sum_{  \substack{j \in \Z \, :\\  |\alpha-j| \leq |\alpha-k|}  }  \!\!\!\!
(R_\ell|a_\ell|)^{1+2|\alpha-j|} |\beta_j^{a_\ell}|^2
\end{align*}
 for some positive constant  $\textrm{const }>0$ 
independent of $\ell$,
while
\begin{align}\label{eq:12}
\int_{\partial
  D_{R_\ell|a_\ell|}(a_\ell)} |\varphi_n^{a_\ell}|^2 \,ds&=
R_\ell|a_\ell|\int_0^{2\pi}|\varphi_n^{a_\ell}( a_\ell
  + R_\ell|a_\ell| (\cos t,\sin t))|^2\,dt\\
&\notag\geq \frac{R_\ell|a_\ell|}2 \sum_{  \substack{j \in \Z \, :\\  |\alpha-j| \leq |\alpha-k|}  }  \!\!\!\!
(R_\ell|a_\ell|)^{2|\alpha-j|} |\beta_j^{a_\ell}|^2,
\end{align}
thus contradicting \eqref{eq:9} as $\ell\to\infty$.

To prove the second estimate, let us assume by contradiction that there exist sequences
  $R_\ell>1$ and $a_\ell\in\Omega$ such that 
$R_\ell|a_\ell|<\frac1{\ell}$ and 
 \begin{equation}\label{eq:9-b}
\int_{D_{(R_\ell+1)|a_\ell|}(a_\ell) \setminus D_{R_\ell|a_\ell|}(a_\ell)}|(i\nabla+A_{a_\ell})\varphi_n^{a_\ell}|^2\,dx> \frac{\ell}{R_\ell^2|a_\ell|} \int_{\partial
  D_{R_\ell|a_\ell|}(a_\ell)} |\varphi_n^{a_\ell}|^2 \,ds.
\end{equation}
As above we have that, up to extracting a subsequence,
$\varphi_n^{a_\ell}\to \varphi_n^0$ in $L^2(\Omega,\C)$ as $\ell\to\infty$ for some
$\varphi_n^0\in H^{1,0}_{0}(\Omega,\C)\setminus\{0\}$ satisfying \eqref{eq:10}.
Then, from Lemma \ref{lemma:pre-lunette} (first estimate in (iii)) it
follows that, for $\ell$ sufficiently large and  for some positive constant  $\textrm{const }>0$ 
independent of $\ell$,
\begin{align*}
&\int_{D_{(R_\ell+1)|a_\ell|}(a_\ell) \setminus
  D_{R_\ell|a_\ell|}(a_\ell)}|(i\nabla+A_{a_\ell})\varphi_n^{a_\ell}|^2\,dx\\
&=\int_{R_\ell|a_\ell|}^{(R_\ell+1)|a_\ell|}r\left(\int_0^{2\pi}|(i\nabla+A_{a_\ell})\varphi_n^{a_\ell}( a_\ell
  + r(\cos t,\sin t))|^2\,dt\right)\,dr\\
&\leq
  3 |\alpha-k|^2\int_{R_\ell|a_\ell|}^{(R_\ell+1)|a_\ell|}r\left(
\sum_{  \substack{j \in \Z \, :\\  |\alpha-j| \leq |\alpha-k|}  } 
\!\!\!\!r^{2|\alpha-j|-2} |\beta_j^{a_\ell}|^2 \right)\,dr\leq \frac{\rm
  const\,}{R_\ell}\!\!\!\!
\sum_{  \substack{j \in \Z \, :\\  |\alpha-j| \leq |\alpha-k|}  }  \!\!\!\!
(R_\ell|a_\ell|)^{2|\alpha-j|} |\beta_j^{a_\ell}|^2
\end{align*}
which, in view of \eqref{eq:12}, contradicts \eqref{eq:9-b} as $\ell\to\infty$.
\end{proof}

\begin{remark} \label{rem:prepre-lunette}
Arguing as in Lemma \ref{lemma:prepre-lunette}, we can also prove the
following similar estimate (possibly taking a smaller $\sigma$ and a
larger $C$ if necessary):  for all $R>1$ and $a\in\Omega$ such
that $0<|a|<\frac{\sigma}{R}$
\[
\frac1{|a|}\int_{D_{(R+1)|a|}(a) \setminus D_{R|a|}(a)}|\varphi_n^a|^2\,dx\leq C \int_{\partial
  D_{(R+1)|a|}(a)} |\varphi_n^a|^2 \,ds
\]
and
\[
\int_{D_{(R+1)|a|}(a) \setminus D_{R|a|}(a)}|(i\nabla+A_a)\varphi_n^a|^2\,dx\leq \frac{C}{R^2|a|} \int_{\partial
  D_{(R+1)|a|}(a)} |\varphi_n^a|^2 \,ds.
\]
\end{remark}

\begin{Lemma} \label{lemma:lunette}
For $n\geq1$ fixed, let $\varphi_n^a$ be a solution to 
\eqref{eq_eigenfunction_n}--\eqref{eq:23_n}. 
Let 
$\sigma>0$ and $C>0$ be as in Lemma
\ref{lemma:prepre-lunette} and Remark \ref{rem:prepre-lunette}. Then, for all $R>2$ and $a\in\Omega$ such
that $0<|a|<\frac{\sigma}{R}$,
\[
\left| \int_{\partial D_{R|a|}(0)}|\varphi_n^a|^2\,ds-
\int_{\partial D_{R|a|}(a)}|\varphi_n^a|^2\,ds \right|
\leq \frac{1+6C}{R-2}\int_{\partial D_{R|a|}(a)} |\varphi_n^a|^2 \,ds.
\]
\end{Lemma}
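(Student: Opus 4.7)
The plan is to parametrize $\partial D_{R|a|}(0)$ in polar coordinates centered at $a$. Since $R>1$, the ray from $a$ in direction $\hat\phi=(\cos\phi,\sin\phi)$ meets $\partial D_{R|a|}(0)$ at a unique point $a+\rho(\phi)\hat\phi$ with $\rho(\phi)=-(a\cdot\hat\phi)+\sqrt{(a\cdot\hat\phi)^{2}+(R^{2}-1)|a|^{2}}$, and one verifies $\rho(\phi)\in[(R-1)|a|,(R+1)|a|]$. A direct computation gives a closed form for the arclength element $\sqrt{\rho(\phi)^{2}+\rho'(\phi)^{2}}$, from which elementary optimization in $\phi$ yields the two geometric estimates $\bigl|\sqrt{\rho^{2}+\rho'^{2}}-R|a|\bigr|\leq|a|$ and $\sqrt{\rho^{2}+\rho'^{2}}/r\leq(R+1)/(R-1)$ for $r\geq(R-1)|a|$. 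In particular $\partial D_{R|a|}(0)$ lies in the annulus $E:=D_{(R+1)|a|}(a)\setminus D_{(R-1)|a|}(a)$.

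Using this parametrization I would write
\[
\int_{\partial D_{R|a|}(0)}|\varphi_{n}^{a}|^{2}\,ds-\int_{\partial D_{R|a|}(a)}|\varphi_{n}^{a}|^{2}\,ds=I_{1}+I_{2},
\]
where $I_{1}=\int_{0}^{2\pi}\bigl(|\varphi_{n}^{a}(a+\rho\hat\phi)|^{2}-|\varphi_{n}^{a}(a+R|a|\hat\phi)|^{2}\bigr)\sqrt{\rho^{2}+\rho'^{2}}\,d\phi$ is the \emph{integrand} term and $I_{2}=\int_{0}^{2\pi}|\varphi_{n}^{a}(a+R|a|\hat\phi)|^{2}\bigl(\sqrt{\rho^{2}+\rho'^{2}}-R|a|\bigr)\,d\phi$ is the \emph{arclength correction}. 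The latter is handled immediately by the first geometric estimate: $|I_{2}|\leq|a|\int_{0}^{2\pi}|\varphi_{n}^{a}(a+R|a|\hat\phi)|^{2}\,d\phi=\tfrac{1}{R}\int_{\partial D_{R|a|}(a)}|\varphi_{n}^{a}|^{2}\,ds\leq\tfrac{1}{R-2}\int_{\partial D_{R|a|}(a)}|\varphi_{n}^{a}|^{2}\,ds$, providing the $1$ in the constant $1+6C$.

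For $I_{1}$ I would apply the fundamental theorem of calculus in the radial variable between $R|a|$ and $\rho(\phi)$, together with the magnetic identity $\nabla|\varphi_{n}^{a}|^{2}=2\Im\bigl(\overline{\varphi_{n}^{a}}(i\nabla+A_{a})\varphi_{n}^{a}\bigr)$. This identity, which is the magnetic substitute for $\nabla|u|^{2}=2\Re(\bar u\nabla u)$, is precisely what allows the estimate to be expressed directly in terms of $(i\nabla+A_{a})\varphi_{n}^{a}$, for which the preceding lemmas provide the required bounds, rather than in terms of the bare gradient $\nabla\varphi_{n}^{a}$. Converting the iterated integral to a Cartesian area integral via $dx=r\,dr\,d\phi$ in polar coordinates centered at $a$, and using $\sqrt{\rho^{2}+\rho'^{2}}/r\leq(R+1)/(R-1)$ on $E$, Cauchy--Schwarz then yields
\[
|I_{1}|\leq\tfrac{2(R+1)}{R-1}\,\|\varphi_{n}^{a}\|_{L^{2}(E)}\,\|(i\nabla+A_{a})\varphi_{n}^{a}\|_{L^{2}(E)}.
\]

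To finish I would split $E=\bigl(D_{(R+1)|a|}(a)\setminus D_{R|a|}(a)\bigr)\cup\bigl(D_{R|a|}(a)\setminus D_{(R-1)|a|}(a)\bigr)$: Lemma~\ref{lemma:prepre-lunette} controls both $L^{2}$-norms on the outer half in terms of $\int_{\partial D_{R|a|}(a)}|\varphi_{n}^{a}|^{2}\,ds$, while Remark~\ref{rem:prepre-lunette} applied with $R$ replaced by $R-1$ does the same on the inner half; this second application requires $R-1>1$, which is exactly why the hypothesis $R>2$ appears. Summing the two halves produces $\|\varphi_{n}^{a}\|_{L^{2}(E)}^{2}\leq 2C|a|\int_{\partial D_{R|a|}(a)}|\varphi_{n}^{a}|^{2}\,ds$ and $\|(i\nabla+A_{a})\varphi_{n}^{a}\|_{L^{2}(E)}^{2}\leq\tfrac{2C}{(R-1)^{2}|a|}\int_{\partial D_{R|a|}(a)}|\varphi_{n}^{a}|^{2}\,ds$, whose geometric mean is $\tfrac{2C}{R-1}\int_{\partial D_{R|a|}(a)}|\varphi_{n}^{a}|^{2}\,ds$. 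Multiplying by $2(R+1)/(R-1)$ and using the elementary inequality $4(R+1)/(R-1)^{2}\leq 6/(R-2)$, valid for all $R>2$ since it reduces to $(R-2)^{2}+3\geq0$, gives $|I_{1}|\leq\tfrac{6C}{R-2}\int_{\partial D_{R|a|}(a)}|\varphi_{n}^{a}|^{2}\,ds$, and the statement follows by summing $|I_{1}|+|I_{2}|$. The main technical obstacle is the tight bookkeeping of constants needed to land on exactly $1+6C$: the crucial point is to employ the magnetic identity rather than the decomposition $\nabla\varphi_{n}^{a}=-i(i\nabla+A_{a})\varphi_{n}^{a}+iA_{a}\varphi_{n}^{a}$, which would introduce an extra $1+\alpha$ factor and spoil the sharp form of the bound.
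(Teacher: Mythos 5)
Your proof is correct, and every constant checks out: the arclength element of $\partial D_{R|a|}(0)$ in polar coordinates centred at $a$ is indeed $R|a|\big(1-\tfrac{a\cdot\hat\phi}{\sqrt{(a\cdot\hat\phi)^2+(R^2-1)|a|^2}}\big)$, which gives both of your geometric estimates, and the chain $\tfrac1R+\tfrac{4C(R+1)}{(R-1)^2}\le\tfrac{1+6C}{R-2}$ lands exactly on the stated bound. However, your route is genuinely different from the paper's. The authors do not parametrize the off-centred circle: they write the difference of the two boundary integrals as flux integrals over the two lunes $\mathcal L_{1,R}^a=D_{R|a|}(0)\setminus D_{R|a|}(a)$ and $\mathcal L_{2,R}^a=D_{R|a|}(a)\setminus D_{R|a|}(0)$, insert the smooth vector field $\nu_1=x/|x|$, and apply the divergence theorem. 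This produces a boundary term on $\partial D_{R|a|}(a)$ with the factor $1-\tfrac{x\cdot(x-a)}{|x||x-a|}$ (bounded by $\tfrac1{R-1}$, the analogue of your arclength-correction term $I_2$) plus two area integrals of $\operatorname{div}(|\varphi_n^a|^2\nu_1)=\tfrac{|\varphi_n^a|^2}{|x|}+2\Re\big(i\varphi_n^a\overline{(i\nabla+A_a)\varphi_n^a}\cdot\nu_1\big)$ over the lunes, each estimated by $\tfrac{3C}{R-1}$ resp.\ $\tfrac{3C}{R-2}$ via Lemma~\ref{lemma:prepre-lunette} and Remark~\ref{rem:prepre-lunette}. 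So both arguments rest on the same two inputs — the magnetic substitute for $\nabla|u|^2$ and the annular energy bounds — and both need $R>2$ for the inner annulus; what your version buys is the avoidance of the divergence theorem and of the extra $\tfrac{|\varphi_n^a|^2}{|x|}$ term (at the price of computing the arclength element of an off-centred circle), while the paper's version avoids any explicit parametrization and keeps all estimates on the geometrically natural lune regions.
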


\begin{proof}
We note that 
\begin{equation} \label{eq:difference-2-terms}
\int_{\partial D_{R|a|}(0)}|\varphi_n^a|^2\,ds-
\int_{\partial D_{R|a|}(a)}|\varphi_n^a|^2\,ds
=\int_{\partial \mathcal L_{1,R}^a}|\varphi_n^a|^2\tilde\nu\cdot\hat\nu ds-
\int_{\partial \mathcal L_{2,R}^a}|\varphi_n^a|^2\tilde\nu\cdot(-\hat\nu) ds.
\end{equation}
where 
\[
\mathcal L_{1,R}^a=D_{R|a|}(0)\setminus D_{R|a|}(a),\quad 
\mathcal L_{2,R}^a=D_{R|a|}(a)\setminus D_{R|a|}(0),
\]
and
\begin{align*}
&\hat\nu(x)=
\begin{cases}
  \frac{x}{|x|},&\text{on }\partial D_{R|a|}(0),\\
  -\frac{x-a}{|x-a|},&\text{on }\partial D_{R|a|}(a),
\end{cases}
\quad
\tilde\nu(x)=
\begin{cases}
  \frac{x}{|x|},&\text{on }\partial D_{R|a|}(0),\\
  \frac{x-a}{|x-a|},&\text{on }\partial D_{R|a|}(a).
\end{cases}
\end{align*}
We note that $\hat\nu$ is the outer unit normal vector on 
$\partial \mathcal L_{1,R}^a$ and $-\hat\nu$ is the outer 
unit normal vector on $\partial \mathcal L_{2,R}^a$.
By denoting $\nu_1(x)=x/|x|$, we can rewrite the right hand side of \eqref{eq:difference-2-terms} as
\begin{multline}\label{eq:15}
\int_{\partial \mathcal{L}_{1,R}^a} |\varphi_n^a|^2 (\tilde{\nu} - \nu_1) \cdot \hat\nu \, ds + 
\int_{\partial \mathcal{L}_{1,R}^a} |\varphi_n^a|^2 \nu_1 \cdot \hat\nu \, ds\\
+ \int_{\partial \mathcal{L}_{2,R}^a} |\varphi_n^a|^2 (\tilde{\nu} - \nu_1) \cdot \hat\nu \, ds 
- \int_{\partial \mathcal{L}_{2,R}^a} |\varphi_n^a|^2 \nu_1 \cdot
(-\hat\nu) \, ds\\
=\int_{\partial \mathcal{L}_{1,R}^a} |\varphi_n^a|^2 \nu_1 \cdot \hat\nu \, ds
- \int_{\partial \mathcal{L}_{2,R}^a} |\varphi_n^a|^2 \nu_1 \cdot
(-\hat\nu) \, ds
\\+\int_{\partial D_{R|a|}(0)} |\varphi_n^a|^2 (\tilde{\nu} - \nu_1)
\cdot \hat\nu \, ds +\int_{\partial D_{R|a|}(a)} |\varphi_n^a|^2
(\tilde{\nu} - \nu_1) \cdot \hat\nu \, ds .
\end{multline}
We observe that 
\begin{equation*}
(\tilde{\nu}(x) - \nu_1(x)) \cdot \hat\nu(x) 
= 
\begin{cases}
0, & \text{on } \partial D_{R|a|}(0), \\
- 1 + \frac{x \cdot (x-a)}{|x| |x-a|}, &\text{on } \partial D_{R|a|}(a).
\end{cases}
\end{equation*}
Moreover, since $\nu_1$ is smooth in $\overline{\mathcal{L}_{1,R}^a}\cup\overline{\mathcal{L}_{2,R}^a}$, 
we can apply the Divergence Theorem to the first two terms in the
right hand side of \eqref{eq:15}, thus rewriting the right hand side of \eqref{eq:difference-2-terms} as
\begin{equation}\label{eq:diff-2-terms2}
- \int_{\partial D_{R|a|}(a)} |\varphi_n^a|^2 
\left(1 - \frac{x \cdot (x-a)}{|x| |x-a|} \right) \,ds 
+ \int_{\mathcal{L}_{1,R}^a} \operatorname{div} ( |\varphi_n^a|^2 \nu_1 ) \,dx 
- \int_{\mathcal{L}_{2,R}^a} \operatorname{div} ( |\varphi_n^a|^2 \nu_1 ) \,dx.
\end{equation}

\noindent \textbf{Estimate of the first term in \eqref{eq:diff-2-terms2}.}  
Parametrizing $\partial D_{R|a|}(a)$ as $x= a +R|a| (\cos t, \sin t)$ 
and writing $a = |a| (\cos \theta_a, \sin \theta_a)$ for some angle 
$\theta_a \in [0,2\pi)$, we get
\[
\left| 1 - \frac{x \cdot (x-a)}{|x| |x-a|} \right| 
=\left|  1 - \frac{R + \cos(t-\theta_a)}
{\left( R^2+2R\cos(t-\theta_a)+1 \right)^{1/2}} \right| \leq \frac{1}{R-1}
\]
on $\partial D_{R|a|}(a)$. Therefore,
\begin{equation} \label{eq:1term}
\left| - \int_{\partial D_{R|a|}(a)} |\varphi_n^a|^2  
\left( 1 - \frac{x \cdot (x-a)}{|x| |x-a|} \right) \,ds \right| 
\leq\frac{1}{R-1} \int_{\partial D_{R|a|}(a)} |\varphi_n^a|^2 \,ds.
\end{equation}

\noindent \textbf{Estimate of the second term in \eqref{eq:diff-2-terms2}.}
The second term in \eqref{eq:diff-2-terms2} splits into two parts
\begin{equation*}
\int_{\mathcal{L}_{1,R}^a} \operatorname{div} \left( |\varphi_n^a|^2 \nu_1 \right) \, dx 
= \int_{\mathcal{L}_{1,R}^a} \frac{|\varphi_n^a|^2}{|x|} \, dx + 
\int_{\mathcal{L}_{1,R}^a} 2 \mathfrak{Re} 
(i \varphi_n^a  \overline{(i \nabla + A_a)\varphi_n^a} \cdot \nu_1) \, dx.
\end{equation*}
Since $D_{R|a|}(0) \subset
D_{(R+1)|a|}(a)$, we have that $\mathcal{L}_{1,R}^a\subseteq
D_{(R+1)|a|}(a) \setminus D_{R|a|}(a)$. 
Let $\sigma>0$ and $C>0$ be as in Lemma
\ref{lemma:prepre-lunette} and Remark \ref{rem:prepre-lunette}. Hence by Lemma
\ref{lemma:prepre-lunette} we have that, for all $R>1$ and $a\in\Omega$ such
that $0<|a|<\frac{\sigma}{R}$,
\begin{align*}
 \left| \int_{\mathcal{L}_{1,R}^a} \frac{|\varphi_n^a|^2}{|x|} \,dx \right| 
&\leq \int_{D_{(R+1)|a|}(a) \setminus D_{R|a|}(a)} 
\frac{|\varphi_n^a|^2}{|x|} \,dx\\
&\leq \frac{1}{(R-1)|a|} \int_{D_{(R+1)|a|}(a) \setminus D_{R|a|}(a)} 
|\varphi_n^a|^2 \,dx 
 \leq  \frac{C}{R-1} \int_{\partial
  D_{R|a|}(a)} |\varphi_n^a|^2 \,ds
\end{align*}
and 
\begin{align*}
&\left| \int_{\mathcal{L}_{1,R}^a} 2 \mathfrak{Re} 
  (i \varphi_n^a  \overline{(i \nabla + A_a)\varphi_n^a} \cdot \nu_1) \, dx \right| \\
&\quad\leq  2
\left(\int_{D_{(R+1)|a|}(a) \setminus D_{R|a|}(a)}|\varphi_n^a|^2\,dx\right)^{1/2}
\left(\int_{D_{(R+1)|a|}(a) \setminus D_{R|a|}(a)}|(i\nabla+A_a)\varphi_n^a|^2\,dx\right)^{1/2} \\
&\quad\leq  \frac{2C}{R} \int_{\partial
  D_{R|a|}(a)} |\varphi_n^a|^2 \,ds.
\end{align*}
Therefore,
\begin{equation} \label{eq:2term}
\left| \int_{\mathcal{L}_{1,R}^a} \operatorname{div} (|\varphi_n^a|^2 \nu_1) \,dx \right| 
\leq \frac{3C}{R-1} \int_{\partial D_{R|a|}(a)} |\varphi_n^a|^2 \,ds,
\end{equation}
for all $R>1$ and $a\in\Omega$ such
that $0<|a|<\frac{\sigma}{R}$.

\noindent\textbf{Estimate of the third term in \eqref{eq:diff-2-terms2}.}
The estimate of the third term can be derived in a similar way,
observing that, since $D_{R|a|}(0) \supset
D_{(R-1)|a|}(a)$, $\mathcal{L}_{2,R}^a\subseteq
D_{R|a|}(a) \setminus D_{(R-1)|a|}(a)$ and using Remark
\ref{rem:prepre-lunette} to obtain 
\begin{equation} \label{eq:3term}
\left| \int_{\mathcal{L}_{2,R}^a} \operatorname{div} (|\varphi_n^a|^2 \nu_1) \,dx \right| 
\leq  \frac{3C}{R-2} \int_{\partial D_{R|a|}(a)} |\varphi_n^a|^2 \,ds,
\end{equation}
 for all $R>2$ and $a\in\Omega$ such
that $0<|a|<\frac{\sigma}{R}$ (by possibly changing $C$ and $\sigma$).

Therefore combining \eqref{eq:1term}, \eqref{eq:2term} and 
\eqref{eq:3term} we complete the proof.
\end{proof}

\section{Monotonicity formula}\label{sec:monotonicity}

\subsection{Almgren type frequency function}

Arguing as in \cite[Lemma 3.1]{AbatangeloFelli2015-1}, 
one can easily prove the following \emph{Poincaré type inequality}
\begin{equation}\label{eq:poincare}
\dfrac{1}{r^2} \int_{D_r} \abs{u}^2\,dx \leq \dfrac1r 
\int_{\partial D_r} \abs{u}^2 \,ds + 
\int_{D_r} \abs{(i\nabla +A_{ a})u}^2 \, dx,
\end{equation} 
which holds for every $r>0$, $a\in D_r$, and $u \in H^{1,a}(D_r,
\C)$. 
Furthermore, defining, for every $b\in D_1$, 
\[
m_{b}: = \inf_{\stackrel{v\in H^{1,b}(D_1,\C)}{v\not\equiv 0}} 
\dfrac{\int_{D_1} \abs{(i\nabla +A_{b})v}^2\,dx}{\int_{\partial D_1} \abs{v}^2\,ds},
\] 
we have that the infimum $m_b$ is attained and $m_b>0$. 
Arguing as in \cite{AbatangeloFelli2015-1}, we can prove that 
$b\mapsto m_b$ is continuous in $D_1$ and that 
$m_0 = \sqrt{\mu_1}$ (with $\mu_1$ as in \eqref{eq:mu1_def}).
Therefore a standard dilation argument yields that,
for any $\delta \in(0,\sqrt{\mu_1})$, there exists some sufficiently 
large 
$\Upsilon_\delta>1$ such that, for every $r>0$ 
and $a\in D_r$ such that $\frac{|a|}r<\frac1{\Upsilon_\delta}$, 
\begin{equation}\label{eq:poincare2}
\frac{ \sqrt{\mu_1}-\delta}r\int_{\partial D_r} \abs{u}^2 \,ds 
\leq \int_{D_r} \abs{(i\nabla +A_{a})u}^2\,dx \quad
\text{ for all }u\in H^{1,a}(D_r,\C).
\end{equation}
For $\lambda \in \R$, $b \in \R^2$, $u \in H^{1,b}(D_r,\C)$ and 
$r>|b|$, we define the Almgren type frequency function as
\[
\mathcal{N}(u,r,\lambda,A_b) = \dfrac{E(u,r,\lambda,A_b)}{H(u,r)},
\]
where
\begin{align*}
E(u,r,\lambda,A_b) 
& = \int_{D_r} \Big[ \abs{ (i\nabla + A_b)u}^2 - \lambda  \abs{u}^2 \Big] \,dx, \\
H(u,r) 
& = \dfrac1r \int_{\partial D_r} \abs{u}^2\,ds .
\end{align*}
 For all $1\leq n\leq n_0$ and $a\in\Omega$, let $\varphi_n^a\in
H^{1,a}_{0}(\Omega,\C)\setminus\{0\}$ be an eigenfunction of problem
\eqref{eq:eige_equation_a} associated to the eigenvalue $\lambda_n^a$,
i.e. solving \eqref{eq_eigenfunction_n}, 
such that 
\begin{equation}\label{eq:23}
  \int_\Omega |\varphi_n^a(x)|^2\,dx=1\quad\text{and}\quad 
  \int_\Omega \varphi_n^a(x)\overline{\varphi_\ell^a(x)}\,dx=0\text{ if }n\neq\ell.
\end{equation}
For $n=n_0$, we choose 
\[
  \varphi_{n_0}^a=\varphi_a,
\]
with $\varphi_a$ as in
\eqref{eq:equation_a}--\eqref{eq:normalization}.  Let 
\begin{equation*}
  \Lambda=\sup_{\substack{a\in\Omega\\1\leq n\leq n_0}}\lambda_n^a\in(0,+\infty).
\end{equation*}
We recall that $\Lambda$ is finite in view of the continuity result of
the eigenvalue function $a\mapsto \lambda_n^a$ in $\overline{\Omega}$ proved in \cite[Theorem
1.1]{BonnaillieNorisNysTerracini2014}.

Arguing as in \cite[Lemma 5.2]{AbatangeloFelli2015-1}, we can prove that there exists
$0 < R_0 < \left( \Lambda \left(1 + \tfrac{2}{\sqrt{\mu_1}} \right) \right)^{-1/2}$ such
that $D_{R_0}\subset\Omega$ and, if $|a|<R_0$,
\begin{equation}\label{eq:70}
  H(\varphi_n^a,r)>0\quad\text{for all }r\in(|a|,R_0) \text{ and }1\leq
  n\leq n_0.
\end{equation}
Furthermore, for every $r\in(0,R_0]$ there exist $C_r>0$ and $\alpha_r\in(0,r)$ such that 
\begin{equation} \label{eq:lower-bound-H}
H(\varphi_n^a,r)\geq C_r\quad\text{for all $a$ with }|a|<\alpha_r \text{ and }1\leq
n\leq n_0.
\end{equation}
Thanks to \eqref{eq:70}, the function $r\mapsto
N(\varphi_n^a,r,\lambda_n^a,A_a)$ is well defined in~$(|a|,R_0)$.
By direct calculations (see
  \cite{NorisNysTerracini2015} for details), we can prove that
\begin{align}\label{eq:74}
& \dfrac{d}{dr} H(\varphi_n^a,r) 
= \dfrac2r E(\varphi_n^a,r,\lambda_n^a,A_a), \\ \label{eq:77}
& \dfrac{d}{dr} E(\varphi_n^a, r,\lambda_n^a,A_a) 
= 2\int_{\partial D_r} \abs{(i\nabla +A_a)\varphi_n^a \cdot\nu}^2\,ds
- \frac2r \left( M_n^a + \lambda_n^a \int_{D_r} \abs{\varphi_n^a}^2\,dx \right)
 \end{align}
where 
\begin{equation}\label{eq:Mja_def}
M_n^a =  \lim_{\eps \to 0^+}\int_{\partial D_\eps(a)}
\bigg( \Re \big( (i\nabla+A_a) \varphi_n^a \cdot \nu 
\overline{ (i\nabla+A_a) \varphi_n^a \cdot x} \big)
-\frac12 | (i\nabla+A_a) \varphi_n^a|^2 x \cdot \nu \bigg) \, ds.
\end{equation}
Lemma~\ref{lemma:lunette} together with Lemmas~\ref{lemma:stimbe} and \ref{lemma:estep} 
allow us to give an estimate of the quantity $M_n^a$ defined in \eqref{eq:Mja_def}.
We notice that the techniques used in \cite{AbatangeloFelli2015-1,NorisNysTerracini2015}
to estimate the term $M_n^a$ for $\alpha=1/2$ were based on the possibility of
rewriting the problem as a Laplace equation on the twofold covering;
hence it is not possibile here to
extend such proofs to the case $\alpha\not\in\frac{\Z}{2}$ and a new
strategy of proof is needed. 

\begin{Lemma} \label{lemma:estimate-Mja}
There exist $\sigma_0>0$ and $c_0>2$  such that, for every $1 \leq n \leq n_0$,
$R>c_0$ and $a\in \Omega$ such
 that $|a|<\frac{\sigma_0}{R}$,
\[
\frac{|M_n^a|}{H (\varphi_n^a, R|a|) } \leq \frac{2\alpha(1-\alpha)}{R-c_0}.
\]
\end{Lemma}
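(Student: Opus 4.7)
The plan is to bound the numerator $|M_n^a|$ from above via Lemma~\ref{lemma:estep}, which gives directly
\[
|M_n^a| \leq 2\alpha(1-\alpha)\,|a|\,|\beta_0^a|\,|\beta_1^a|,
\]
and to bound the denominator $H(\varphi_n^a,R|a|)$ from below by a matching quantity of the form $\gtrsim R|a|\,|\beta_0^a||\beta_1^a|$ using Lemma~\ref{lemma:lunette} together with the Fourier expansion of Lemma~\ref{lemma:stimbe}; dividing then produces a ratio of order $2\alpha(1-\alpha)/R \leq 2\alpha(1-\alpha)/(R-c_0)$.

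For the lower bound I would first move from the circle $\partial D_{R|a|}(0)$ to the more natural circle $\partial D_{R|a|}(a)$ via Lemma~\ref{lemma:lunette}, which gives
\[
\int_{\partial D_{R|a|}(0)}|\varphi_n^a|^2\,ds \geq \Big(1-\tfrac{1+6C}{R-2}\Big)\int_{\partial D_{R|a|}(a)}|\varphi_n^a|^2\,ds,
\]
and then apply Parseval's identity to the expansion given by Lemma~\ref{lemma:stimbe}(iv):
\[
\int_{\partial D_{R|a|}(a)}|\varphi_n^a|^2\,ds = R|a|\sum_{j\in\Z}(R|a|)^{2|\alpha-j|}|\beta_j^a|^2\,|1+R_{j,a}(R|a|)|^2.
\]
Since $\alpha\in(0,1)\setminus\{1/2\}$, the two smallest exponents in this sum are $2\alpha$ (at $j=0$) and $2(1-\alpha)$ (at $j=1$); keeping only these two terms and invoking the AM--GM inequality
\[
(R|a|)^{2\alpha}|\beta_0^a|^2 + (R|a|)^{2(1-\alpha)}|\beta_1^a|^2 \geq 2R|a|\,|\beta_0^a||\beta_1^a|
\]
is the crucial step: the algebraic identity $\alpha+(1-\alpha)=1$ forces the geometric mean to be exactly $R|a|$, reproducing precisely the linear $|a|$ factor on the right-hand side of the upper bound for $|M_n^a|$. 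Combined with the uniform estimate $|R_{j,a}(r)|\leq \mathrm{const}\cdot r^2$ from Lemma~\ref{lemma:stimbe}(iii), valid on $r\leq R|a|<\sigma_0$ with $\sigma_0$ small, this yields
\[
\int_{\partial D_{R|a|}(a)}|\varphi_n^a|^2\,ds \geq 2\bigl(1-\eta(\sigma_0)\bigr)(R|a|)^2\,|\beta_0^a||\beta_1^a|,
\]
with $\eta(\sigma_0)\to 0$ as $\sigma_0\to 0^+$.

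Putting everything together and dividing by $R|a|$ gives the sought lower bound $H(\varphi_n^a,R|a|)\geq 2\bigl(1-\tfrac{1+6C}{R-2}\bigr)\bigl(1-\eta(\sigma_0)\bigr)\,R|a|\,|\beta_0^a||\beta_1^a|$, whence
\[
\frac{|M_n^a|}{H(\varphi_n^a,R|a|)} \leq \frac{\alpha(1-\alpha)}{\bigl(1-\tfrac{1+6C}{R-2}\bigr)\bigl(1-\eta(\sigma_0)\bigr)\,R}.
\]
The final step is to fix $\sigma_0$ small and $c_0$ large enough that the right-hand side is dominated by $2\alpha(1-\alpha)/(R-c_0)$ for every $R>c_0$, which is a pure bookkeeping exercise. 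The main obstacle is exactly this calibration: the combined prefactor $\bigl(1-\tfrac{1+6C}{R-2}\bigr)\bigl(1-\eta(\sigma_0)\bigr)$ must be kept bounded below by, say, $\tfrac12$ uniformly in $R>c_0$ and $|a|<\sigma_0/R$, for which the uniformity across the Fourier index $j$ and the pole $a$ of the remainder estimates in Lemma~\ref{lemma:stimbe}(ii)--(iii) is essential.
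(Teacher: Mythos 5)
Your proposal is correct and follows essentially the same route as the paper: the upper bound $|M_n^a|\le 2\alpha(1-\alpha)|a|\,|\beta_0^a||\beta_1^a|$ from Lemma~\ref{lemma:estep}, the lower bound on the circle centered at $a$ obtained by keeping the $j=0$ and $j=1$ terms of the Parseval expansion and applying the elementary inequality $2ab\le a^2+b^2$ with the exponents $\alpha+(1-\alpha)=1$, and the transfer to the circle centered at $0$ via Lemma~\ref{lemma:lunette}, followed by the same calibration of $\sigma_0$ and $c_0$.
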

\begin{proof}
Let us fix $n\in \{1,2,\dots,n_0\}$ and define, for $|a|$ small and $r\in(0,1]$, 
\[
\widetilde{H}(\varphi_n^a, r) = \frac{1}{r} 
\int_{\partial D_{r}(a)} |\varphi_n^a|^2 \,ds.
\]
From the Parseval identity and Lemma \ref{lemma:stimbe} (iv) it follows that there exists
$\sigma_n>0$ such that, for every 
$R>2$ and $a\in \Omega$ such
 that $|a|<\frac{\sigma_n}{R}$,
 \begin{align}\label{eq:16}
   \widetilde{H}(\varphi_n^a, R|a|)&=\int_0^{2\pi}|\varphi_n^a( a + R|a|(\cos t,\sin t))|^2\,dt =\sum_{  j \in \Z}
(R|a|)^{2|\alpha-j|} |\beta_j^a|^2 \big| 1 + R_{j,a}(R|a|) \big|^2\\
\notag&\geq 
(R|a|)^{2\alpha} |\beta_0^a|^2 \big| 1 + R_{0,a}(R|a|)\big|^2+(R|a|)^{2(1-\alpha)} |\beta_1^a|^2 \big| 1 + R_{1,a}(R|a|)\big|^2\\
\notag&\geq \frac12\left(|\beta_0^a|^2 (R|a|)^{2\alpha}+|\beta_1^a|^2 (R|a|)^{2(1-\alpha)}\right)
 \end{align}
where the $\beta_j^a$'s are the coefficients defined in
\eqref{eq:beta-2} for the eigenfunction $\varphi_n^a$ (with $n$
fixed). From the elementary inequality $ab\leq \frac12(a^2+b^2)$, it
follows that 
\begin{equation}\label{eq:17}
  |\beta_0^a| |\beta_1^a| |a|=\frac1R |\beta_0^a|
  (R|a|)^{\alpha}|\beta_1^a| (R|a|)^{1-\alpha}
\leq \frac1{2R}\Big( |\beta_0^a|^2
  (R|a|)^{2\alpha}+|\beta_1^a|^2 (R|a|)^{2(1-\alpha)}\Big).
\end{equation}
Combining \eqref{eq:16} and \eqref{eq:17} we obtain that 
\begin{equation}\label{eq:MoverH1-1}
\frac{|\beta_0^a| |\beta_1^a| |a|}{\widetilde{H}(\varphi_n^a, R|a|)}\leq\frac1R.
\end{equation}
Moreover, Lemma \ref{lemma:estep} implies that
\begin{equation}\label{eq:MoverH1-2}
|M_n^a| \leq 2\alpha(1-\alpha) |\beta_0^a| |\beta_1^a| |a|. 
\end{equation}
 Lemma~\ref{lemma:lunette} provides some constant $c_n$ (independent of
$a$ and $R$) such that, for a possibly smaller $\sigma_n$ and for all
$R>2$ and $a\in\Omega$ such that $0<|a|<\frac{\sigma_n}R$,
\begin{equation}\label{eq:MoverH2}
\left| H(\varphi_n^a, R|a|) - \widetilde{H}(\varphi_n^a, R|a|) \right| 
\leq \frac{c_n}{R-2} \widetilde{H}(\varphi_n^a, R|a|) .
\end{equation}
Therefore, by combining \eqref{eq:MoverH1-1}, \eqref{eq:MoverH1-2}, and \eqref{eq:MoverH2}, we obtain
\[
\frac{|M_n^a|}{H(\varphi_n^a, R|a|)} \leq \frac{2\alpha(1-\alpha)}{R} 
\frac{1}{1 +  \frac{H(\varphi_n^a, R|a|) - \widetilde{H}(\varphi_n^a, R|a|)}
{\widetilde{H} (\varphi_n^a, R|a|)} } 
\leq 
\frac{2\alpha(1-\alpha)}{R}\frac{1}{1-\frac{c_n}{R-2}}
\leq \frac{2\alpha(1-\alpha)}{R-(2+c_n)}
\]
for all
$R>c_n+2$ and $a\in\Omega$ such that $0<|a|<\frac{\sigma_n}R$.

The conclusion then follows by repeating the argument for all
$n\in\{1,2,\dots,n_0\}$ and choosing $\sigma_0=\min\{\sigma_n:1\leq
n\leq n_0\}$ and $c_0=\max\{2+c_n:1\leq
n\leq n_0\}$.
\end{proof}

\begin{Lemma}\label{lemma:stima_H_sotto}
For $\delta \in (0,\frac12\sqrt{\mu_1})$, let $\Upsilon_\delta$ be such 
that \eqref{eq:poincare2} holds. Let $R_0$ be as above, 
$r_0 \leq R_0$ and $n \in \{1,\dots,n_0\}$. 
If $\Upsilon_\delta |a| \leq r_1 < r_2 \leq r_0$ and $\varphi_n^a$ is a 
solution to \eqref{eq_eigenfunction_n} satisfying \eqref{eq:23}, then
\begin{equation*}
\frac{H(\varphi_n^a,r_2)}{H(\varphi_n^a,r_1)} 
\geq e^{- \Lambda(2+\sqrt{\mu_1})   r_0^2} 
\left(\dfrac{r_2}{r_1} \right)^{2(\sqrt{\mu_1}-\delta)}.
\end{equation*}
\end{Lemma}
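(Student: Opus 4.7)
The plan is a standard Almgren-type computation, using \eqref{eq:74} to compute the logarithmic derivative of $H$ and then bounding the frequency function $\mathcal{N}(\varphi_n^a,r,\lambda_n^a,A_a)$ from below by means of the two Poincaré-type inequalities \eqref{eq:poincare} and \eqref{eq:poincare2}. First I would observe that, thanks to \eqref{eq:70}, the function $r\mapsto \log H(\varphi_n^a,r)$ is well defined and differentiable on $(|a|,R_0)$, and identity \eqref{eq:74} gives
\[
\frac{d}{dr}\log H(\varphi_n^a,r)=\frac{2}{r}\,\mathcal{N}(\varphi_n^a,r,\lambda_n^a,A_a)=\frac{2}{r}\,\frac{I(r)-\lambda_n^a\,J(r)}{H(\varphi_n^a,r)},
\]
where I abbreviate $I(r)=\int_{D_r}|(i\nabla+A_a)\varphi_n^a|^2\,dx$ and $J(r)=\int_{D_r}|\varphi_n^a|^2\,dx$.

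The core of the argument is then a pointwise lower bound for $\mathcal{N}$ valid on the interval $[r_1,r_2]$. Since $\Upsilon_\delta|a|\leq r_1\leq r$, inequality \eqref{eq:poincare2} applies and yields $I(r)\geq(\sqrt{\mu_1}-\delta)H(\varphi_n^a,r)$; combining this with the Poincaré inequality \eqref{eq:poincare} in the form $J(r)\leq r^2H(\varphi_n^a,r)+r^2I(r)$ one gets
\[
\mathcal{N}(\varphi_n^a,r,\lambda_n^a,A_a)\geq \frac{I(r)}{H(\varphi_n^a,r)}\bigl(1-\lambda_n^a r^2\bigr)-\lambda_n^a r^2.
\]
Since $r\leq r_0\leq R_0$ and $R_0^2\Lambda<1$ by the definition of $R_0$, the factor $1-\lambda_n^a r^2$ is nonnegative, so using once more $I/H\geq\sqrt{\mu_1}-\delta$ one obtains
\[
\mathcal{N}(\varphi_n^a,r,\lambda_n^a,A_a)\geq (\sqrt{\mu_1}-\delta)-\Lambda r^2(1+\sqrt{\mu_1}-\delta)\geq (\sqrt{\mu_1}-\delta)-\Lambda r^2(2+\sqrt{\mu_1}).
\]

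To conclude, I would integrate the resulting differential inequality
\[
\frac{d}{dr}\log H(\varphi_n^a,r)\geq \frac{2(\sqrt{\mu_1}-\delta)}{r}-2\Lambda(2+\sqrt{\mu_1})\,r
\]
between $r_1$ and $r_2$, obtaining
\[
\log\frac{H(\varphi_n^a,r_2)}{H(\varphi_n^a,r_1)}\geq 2(\sqrt{\mu_1}-\delta)\log\frac{r_2}{r_1}-\Lambda(2+\sqrt{\mu_1})(r_2^2-r_1^2),
\]
and the claim follows by exponentiating and bounding $r_2^2-r_1^2\leq r_0^2$. None of the steps seems delicate here: the only point that must be checked carefully is that the hypothesis $\Upsilon_\delta|a|\leq r_1$ keeps $|a|/r<1/\Upsilon_\delta$ on the entire interval of integration, which is exactly what is needed to invoke \eqref{eq:poincare2}, while the smallness $r_0\leq R_0$ is what makes the coefficient $1-\lambda_n^a r^2$ nonnegative and thus lets one absorb the potential term into the principal one.
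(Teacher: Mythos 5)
Your proposal is correct and follows essentially the same route as the paper: both use \eqref{eq:74} to write $\frac{d}{dr}\log H$ in terms of the frequency, bound the $L^2$ term via \eqref{eq:poincare} and \eqref{eq:poincare2} (using $\Lambda R_0^2(1+2/\sqrt{\mu_1})<1$ to keep the relevant coefficient nonnegative), and integrate the resulting differential inequality $\frac{d}{dr}\log H\geq \frac{2(\sqrt{\mu_1}-\delta)}{r}-2\Lambda(2+\sqrt{\mu_1})r$. The only difference is cosmetic bookkeeping in how the two Poincaré inequalities are combined before arriving at that same inequality.
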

\begin{proof}
Combining \eqref{eq:poincare} with \eqref{eq:poincare2} we obtain that, 
for every  $\Upsilon_\delta|a|<r<r_0$,
\begin{align*} 
\dfrac{1}{r^2} \int_{D_r} \abs{\varphi_n^a}^2 \,dx &\leq
\left(1+\frac{1}{\sqrt{\mu_1} - \delta}\right) 
\int_{D_r} \abs{(i\nabla + A_a)\varphi_n^a}^2\,dx \\
&\leq
\left(1+\frac{2}{\sqrt{\mu_1}}\right) 
\int_{D_r} \abs{(i\nabla + A_a)\varphi_n^a}^2\,dx.
\end{align*}
From above, \eqref{eq:74} and \eqref{eq:poincare2},
we have that for every $\Upsilon_\delta|a|<r<r_0$ 
\begin{align*}
\dfrac{d}{dr}H(\varphi_n^a,r) 
& \geq \frac2r \left( 1- \Lambda r^2\Big(1+\tfrac2{\sqrt{\mu_1}}\Big) \right)
\int_{D_r} \abs{(i\nabla + A_a)\varphi_n^a}^2\,dx \\
& \geq \frac2r \left( 1- \Lambda r^2\Big(1+\tfrac2{\sqrt{\mu_1}}\Big) \right)
(\sqrt{\mu_1}-\delta) H(\varphi_n^a,r),
\end{align*}
so that, in view of \eqref{eq:70}, 
\[ 
\dfrac{d}{dr} \log H(\varphi_n^a,r) \geq
\frac{2}r(\sqrt{\mu_1}-\delta)-2\Lambda r(2+\sqrt{\mu_1}).
\]
Integrating between $r_1$ and $r_2$ we obtain the desired inequality.
\end{proof}

\begin{Lemma}\label{l:3.3}
For $n\in\{1,\dots,n_0\}$ and $a\in\Omega$, let $\varphi_n^a$ 
be a solution of \eqref{eq_eigenfunction_n} satisfying \eqref{eq:23}. 
Let $R_0$ be as above, $\sigma_0$ and $c_0>0$
 be as in Lemma \ref{lemma:estimate-Mja} and let $r_0 \leq\min\{ R_0,\sigma_0\}$.
For $\delta \in (0, \frac{\sqrt{\mu_1}}{2})$, let $\Upsilon_\delta > 1$ be such that \eqref{eq:poincare2} holds.
Then, there exists $c_{r_0, \delta} > 0$ 
such that for all $R > \max\{\Upsilon_{\delta},c_0\}$, $|a| < r_0 / R$, 
$R |a| \leq r < r_0$ and $n\in\{1,\dots,n_0\}$,
\[
e^{\frac{\Lambda r^2}{1-\Lambda r_0^2}} 
\left( \mathcal{N}(\varphi_n^a, r, \lambda_n^a, A_a) + 1 \right)
\leq e^{\frac{\Lambda r_0^2}{1 - \Lambda r_0^2}}
\left( \mathcal{N}(\varphi_n^a, r_0, \lambda_n^a, A_a) + 1 \right) 
+ \frac{c_{r_0,\delta}}{R-c_0}.
\]
\end{Lemma}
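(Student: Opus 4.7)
The strategy is a perturbed Almgren monotonicity argument: I would show that the function $r\mapsto e^{\Lambda r^2/(1-\Lambda r_0^2)}(\mathcal{N}(\varphi_n^a,r,\lambda_n^a,A_a)+1)$ is almost monotone non-decreasing, modulo an error of order $1/(R-c_0)$, and then integrate from $r$ to $r_0$.

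First I would compute $\mathcal{N}'(r)$ directly from the derivative formulas \eqref{eq:74}--\eqref{eq:77}. The algebraic core is a Cauchy--Schwarz estimate: integration by parts combined with the eigenvalue equation \eqref{eq_eigenfunction_n}, together with the vanishing of the boundary term at the pole (which is justified by Proposition \ref{prop:fft} and Lemma \ref{lemma:stimbe}, since $\varphi_n^a$ vanishes at $a$ of order at least $\sqrt{\mu_1}$), identifies $E(\varphi_n^a,r,\lambda_n^a,A_a)$ with an imaginary boundary integral of $\overline{\varphi_n^a}\,(i\nabla+A_a)\varphi_n^a\cdot\nu$ on $\partial D_r$; Cauchy--Schwarz then gives $E^2\leq r H\cdot\int_{\partial D_r}|(i\nabla+A_a)\varphi_n^a\cdot\nu|^2\,ds$. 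Using this to absorb the $-2E^2/(rH^2)$ contribution coming from $H'=2E/r$, only the negative terms survive:
\[
\mathcal{N}'(r)\;\geq\;-\frac{2M_n^a}{rH(\varphi_n^a,r)}-\frac{2\lambda_n^a}{rH(\varphi_n^a,r)}\int_{D_r}|\varphi_n^a|^2\,dx.
\]

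Next I would handle the two error terms. The eigenvalue term I would bound via the Poincaré-type inequality \eqref{eq:poincare} applied to $\int_{D_r}|\varphi_n^a|^2$: solving for this integral (legitimate since $r\leq r_0$ and $\Lambda r_0^2<1$ by the choice of $R_0$) produces
\[
\frac{\int_{D_r}|\varphi_n^a|^2\,dx}{rH(\varphi_n^a,r)}\;\leq\;\frac{r}{1-\Lambda r_0^2}\bigl(1+\mathcal{N}(\varphi_n^a,r,\lambda_n^a,A_a)\bigr),
\]
which is exactly the linear-in-$(\mathcal{N}+1)$ term that the integrating factor $e^{\Lambda r^2/(1-\Lambda r_0^2)}$ is designed to absorb. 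For the $M_n^a$ term, the key trick is to apply Lemma \ref{lemma:estimate-Mja} with a \emph{running} radius: for each $s\in[R|a|,r_0]$, set $\widetilde{R}:=s/|a|$, which satisfies $\widetilde{R}>c_0$ and $|a|<\sigma_0/\widetilde{R}$ provided $r_0\leq\sigma_0$, giving
\[
\frac{|M_n^a|}{H(\varphi_n^a,s)}\;\leq\;\frac{2\alpha(1-\alpha)|a|}{s-c_0|a|}.
\]

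Putting the two bounds together yields
\[
\frac{d}{dr}\Bigl[e^{\frac{\Lambda r^2}{1-\Lambda r_0^2}}\bigl(\mathcal{N}(\varphi_n^a,r,\lambda_n^a,A_a)+1\bigr)\Bigr]\;\geq\;-\frac{2|M_n^a|}{rH(\varphi_n^a,r)}\,e^{\frac{\Lambda r^2}{1-\Lambda r_0^2}}.
\]
Integrating from $r$ to $r_0$ and bounding the exponential by $e^{\Lambda r_0^2/(1-\Lambda r_0^2)}$, the proof reduces to estimating $\int_r^{r_0}\frac{|a|}{s(s-c_0|a|)}\,ds$. Partial fractions turn this into $\frac{1}{c_0}\bigl[\log\frac{s-c_0|a|}{s}\bigr]_r^{r_0}$; using $-\log(1-x)\leq x/(1-x)$ at the lower endpoint and discarding the non-positive upper endpoint contribution gives the uniform bound $\frac{1}{R-c_0}$ since $r\geq R|a|$. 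This is precisely the form appearing in the statement, with $c_{r_0,\delta}$ absorbing the universal prefactors.

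The main obstacle is the subtle but crucial use of Lemma \ref{lemma:estimate-Mja}: if one applied it only at the fixed radius $R|a|$, the estimate $|M_n^a|/H(\varphi_n^a,s)$ would be a constant in $s$, producing a $\log(r_0/r)$ divergence upon integration that blows up as $|a|\to 0$. Letting the "$R$" of the lemma vary with $s$ provides exactly the extra $1/s$ decay needed to make the integral $\int_r^{r_0}\frac{|a|}{s(s-c_0|a|)}\,ds$ uniformly $O(1/(R-c_0))$, and this is what ultimately forces the lower bound $R>c_0$ in the hypotheses.
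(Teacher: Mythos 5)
Your proposal is correct, and its overall skeleton (differential inequality for $\mathcal{N}$ via \eqref{eq:74}--\eqref{eq:77} and Cauchy--Schwarz, absorption of the eigenvalue term through the Poincar\'e inequality \eqref{eq:poincare} into the integrating factor $e^{\Lambda r^2/(1-\Lambda r_0^2)}$, then integration from $r$ to $r_0$) coincides with the paper's. The one place where you genuinely diverge is the treatment of the term $-2|M_n^a|/(rH(\varphi_n^a,r))$. The paper applies Lemma \ref{lemma:estimate-Mja} only at the fixed radius $R|a|$ and then transfers the bound to the running radius $r$ by writing $\frac{|M_n^a|}{H(\varphi_n^a,r)}=\frac{|M_n^a|}{H(\varphi_n^a,R|a|)}\cdot\frac{H(\varphi_n^a,R|a|)}{H(\varphi_n^a,r)}$ and invoking the doubling-type lower bound of Lemma \ref{lemma:stima_H_sotto}; this produces the integrand $(R|a|)^{2(\sqrt{\mu_1}-\delta)}s^{-2(\sqrt{\mu_1}-\delta)-1}$, whose integral over $[r,r_0]$ is bounded by $\frac{1}{2(\sqrt{\mu_1}-\delta)}$ because $r\geq R|a|$. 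You instead apply Lemma \ref{lemma:estimate-Mja} with the running radius $\widetilde R=s/|a|$ (the hypotheses $\widetilde R>c_0$ and $\widetilde R|a|=s<r_0\leq\sigma_0$ are indeed satisfied on $[R|a|,r_0)$, since the lemma's $R$ is a free parameter), obtaining $\frac{|M_n^a|}{H(\varphi_n^a,s)}\leq\frac{2\alpha(1-\alpha)|a|}{s-c_0|a|}$, and then integrate by partial fractions; the bound $-\log(1-c_0|a|/r)\leq\frac{c_0|a|}{r-c_0|a|}\leq\frac{c_0}{R-c_0}$ closes the argument. Both routes are valid. Yours is arguably cleaner: it bypasses Lemma \ref{lemma:stima_H_sotto} entirely for this term, so the resulting constant ($4\alpha(1-\alpha)e^{\Lambda r_0^2/(1-\Lambda r_0^2)}$, up to the choice of normalization) does not depend on $\delta$, and the role of the hypothesis $R>c_0$ becomes transparent. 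What the paper's version buys is the reuse of machinery ($H$-monotonicity) that is needed elsewhere anyway, and an error term with explicit power-like decay $(R|a|/r)^{2(\sqrt{\mu_1}-\delta)}$ in the radius rather than merely a uniform bound. Your diagnosis of why a fixed-radius application of Lemma \ref{lemma:estimate-Mja} alone would fail (a logarithmic divergence $\log(r_0/r)$ blowing up as $|a|\to0$) is accurate and is precisely the difficulty that both remedies are designed to overcome.
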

\begin{proof}
By direct calculations, using the expressions for the derivatives of
the functions $H(\varphi_n^a, r)$ and 
$E(\varphi_n^a, r, \lambda_n^a, A_a)$ written in \eqref{eq:74} and \eqref{eq:77}
and the Cauchy-Schwarz inequality, we obtain that
\begin{equation}\label{eq:Nderivative}
\frac{d}{dr} \mathcal{N}(\varphi_n^a, r, \lambda_n^a, A_a)  \geq 
- \frac{2 |M_n^a|}{r H(\varphi_n^a, r)} - \frac{2 \lambda_n^a}
{r H(\varphi_n^a, r) } \int_{D_r} |\varphi_n^a|^2 \,dx.
\end{equation}
By Lemmas~\ref{lemma:stima_H_sotto} 
and \ref{lemma:estimate-Mja} the first term can be estimated as
\begin{align}\label{eq:Nderivative2}
- \frac{2 |M_n^a|}{r H(\varphi_n^a,r)} 
&= - \frac{2 |M_n^a|}{r H(\varphi_n^a, R|a|)} 
\frac{H(\varphi_n^a, R|a|)}{H(\varphi_n^a, r)} \\
\notag&\geq - \frac{4\alpha(1-\alpha)}{R-c_0}  e^{\Lambda (2 + \sqrt{\mu_1}) r_0^2} 
(R|a|)^{2(\sqrt{\mu_1} - \delta)} r^{- 2 (\sqrt{\mu_1}-\delta) -1},
\end{align}
for all $R > \max\{\Upsilon_{\delta},c_0\}$, $|a| < r_0 / R$, 
$R |a| \leq r < r_0$ and $n\in\{1,\dots,n_0\}$.

For the second term, the Poincaré inequality \eqref{eq:poincare} leads to
\[
\frac{1 - \Lambda r_0^2}{r^2} \int_{D_r} |\varphi_n^a|^2 \,dx 
\leq E(\varphi_n^a, r, \lambda_n^a, A_a) + H(\varphi_n^a, r),
\]
for $r<r_0$, which implies
\begin{equation}\label{eq:Nderivative3}
- \frac{2 r \lambda_n^a}
{r^2 H(\varphi_n^a, r)} \int_{D_r} |\varphi_n^a|^2 \,dx
\geq - \frac{2 \Lambda r}{1 - \Lambda r_0^2} 
(\mathcal{N}(\varphi_n^a, r, \lambda_n^a, A_a) + 1),
\end{equation}
for $r<r_0$.
Using \eqref{eq:Nderivative2} and \eqref{eq:Nderivative3} we can
estimate the right hand side of \eqref{eq:Nderivative} thus obtaining
\begin{multline*}
\frac{d}{dr} \left( e^{\frac{\Lambda r^2}{1 - \Lambda r_0^2}} 
(\mathcal{N}(\varphi_n^a, r, \lambda_n^a, A_a) + 1 ) \right) \\
\geq - 
\frac{4\alpha(1-\alpha)}{R-c_0}  e^{\frac{\Lambda r_0^2}{1 - \Lambda r_0^2}} 
e^{\Lambda (2 + \sqrt{\mu_1}) r_0^2} (R|a|)^{2 (\sqrt{\mu_1} - \delta)} 
r^{- 2 (\sqrt{\mu_1} - \delta) -1},
\end{multline*}
for all $R |a| \leq r < r_0$ with $R >
\max\{\Upsilon_{\delta},c_0\}$.
Integrating between $r$ and $r_0$ and using the fact that $R|a| \leq r < r_0$, 
we obtain the statement with
\[
c_{r_0, \delta} = \frac{2\alpha(1-\alpha)}{\sqrt{\mu_1} - \delta} e^{\Lambda (2 +  \sqrt{\mu_1}) r_0^2 + \frac{\Lambda r_0^2}{1 - \Lambda r_0^2}} . \qedhere
\]
\end{proof}

\begin{Lemma}  \label{lemma:estimate-N}
Let $\varphi_a$ be a solution 
of \eqref{eq:equation_a}--\eqref{eq:normalization} and let $k$ be as in \eqref{eq:37}. 
For every $\delta \in (0, \frac{\sqrt{\mu_1}}{2})$, 
there exist $r_\delta \in( 0,R_0)$ and $K_\delta >
  \Upsilon_\delta$
 such that, 
if $R > K_\delta$, $|a| < r_\delta/R$ and $R|a| \leq r < r_\delta$, then 
\[
\mathcal{N}(\varphi_a, r, \lambda_a, A_a) \leq |\alpha - k| + \delta.
\]
\end{Lemma}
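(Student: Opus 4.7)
The plan is to apply Lemma~\ref{l:3.3} with a small but fixed radius $r_0=r_\delta$ and to control the ``boundary'' quotient $\mathcal{N}(\varphi_a,r_\delta,\lambda_a,A_a)$ by passing to the limit $a\to 0$. Indeed, from Proposition~\ref{prop:fft} applied to $\varphi_0$ together with \eqref{eq:37}, the limit eigenfunction satisfies
\[
\lim_{r\to 0^+}\mathcal{N}(\varphi_0,r,\lambda_0,A_0)=|\alpha-k|.
\]
So first I fix $r_\delta\in(0,\min\{R_0,\sigma_0\})$ sufficiently small so that $\mathcal{N}(\varphi_0,r_\delta,\lambda_0,A_0)\le|\alpha-k|+\delta/4$, and simultaneously so small that the prefactor $\exp\!\big(\tfrac{\Lambda r_\delta^2}{1-\Lambda r_\delta^2}\big)$ appearing in Lemma~\ref{l:3.3} is close enough to $1$ to add at most $\delta/4$ when multiplying $|\alpha-k|+1+\delta/2$.

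Next I pass from $\varphi_0$ to $\varphi_a$ at the fixed scale $r_\delta$. Since $H(\varphi_0,r_\delta)>0$ (by \eqref{eq:lower-bound-H} or directly from $\varphi_0\not\equiv0$ near $\partial D_{r_\delta}$), and since the convergences \eqref{eq:convergence-varphi-a-1}--\eqref{eq:convergence-varphi-a-2} combined with the continuity of the trace $H^1(\Omega,\C)\to L^2(\partial D_{r_\delta},\C)$ and with $\lambda_a\to\lambda_0$ yield
\[
E(\varphi_a,r_\delta,\lambda_a,A_a)\to E(\varphi_0,r_\delta,\lambda_0,A_0),\qquad H(\varphi_a,r_\delta)\to H(\varphi_0,r_\delta),
\]
one has $\mathcal{N}(\varphi_a,r_\delta,\lambda_a,A_a)\to\mathcal{N}(\varphi_0,r_\delta,\lambda_0,A_0)$ as $a\to0$. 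Hence there exists $\alpha_\delta>0$ such that, for $|a|<\alpha_\delta$,
\[
\mathcal{N}(\varphi_a,r_\delta,\lambda_a,A_a)\le|\alpha-k|+\tfrac{\delta}{2}.
\]

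Finally, I invoke Lemma~\ref{l:3.3} with $r_0=r_\delta$: for every $R>\max\{\Upsilon_\delta,c_0,r_\delta/\alpha_\delta\}$, every $a$ with $|a|<r_\delta/R$ (so that also $|a|<\alpha_\delta$), and every $r\in[R|a|,r_\delta)$,
\[
\mathcal{N}(\varphi_a,r,\lambda_a,A_a)+1\le e^{\frac{\Lambda r_\delta^2-\Lambda r^2}{1-\Lambda r_\delta^2}}\bigl(|\alpha-k|+1+\tfrac{\delta}{2}\bigr)+\frac{c_{r_\delta,\delta}}{R-c_0}\,e^{-\frac{\Lambda r^2}{1-\Lambda r_\delta^2}}.
\]
Choosing $K_\delta$ larger still, so that $\tfrac{c_{r_\delta,\delta}}{K_\delta-c_0}\le\delta/4$, the right-hand side is bounded above by $|\alpha-k|+1+\delta$, which gives the claim with this $K_\delta$ and $r_\delta$.

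The main obstacle is the passage to the limit $\mathcal{N}(\varphi_a,r_\delta,\lambda_a,A_a)\to\mathcal{N}(\varphi_0,r_\delta,\lambda_0,A_0)$, since it requires convergence of the magnetic energies on $D_{r_\delta}$ rather than merely $L^2$-convergence of the eigenfunctions; the gauge/phase factor $e^{i\alpha(\theta_a^0-\theta_a)}$ appearing implicitly when comparing $A_a$ to $A_0$ has to be handled, but this is precisely what the normalization \eqref{eq:normalization} and the convergence \eqref{eq:convergence-varphi-a-2} provide. The rest is a bookkeeping of constants, made clean by first freezing $r_\delta$ and only then letting $R$ be large.
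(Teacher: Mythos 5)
Your proposal is correct and follows essentially the same route as the paper: convergence of the Almgren quotient at a fixed radius via \eqref{eq:convergence-varphi-a-1}--\eqref{eq:convergence-varphi-a-2}, the limit $\lim_{r\to0^+}\mathcal{N}(\varphi_0,r,\lambda_0,A_0)=|\alpha-k|$ from \cite{FelliFerreroTerracini2011}, and the almost-monotonicity of Lemma~\ref{l:3.3}. The paper states these three ingredients and delegates the bookkeeping to references; you have simply carried out that bookkeeping explicitly, and correctly.
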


\begin{proof}
 From \eqref{eq:convergence-varphi-a-1}--\eqref{eq:convergence-varphi-a-2} it follows that, for every $r<R_0$,
\[
\lim_{a\to 0} 
\mathcal{N}(\varphi_a, r, \lambda_a, A_a)= \mathcal{N}(\varphi_0, r,
\lambda_0, A_0) .
\]
Moreover, from \cite[Theorem 1.3]{FelliFerreroTerracini2011} we know
that, under assumption \eqref{eq:131}, 
\[
\lim_{r\to 0^+} \mathcal{N}(\varphi_0, r,
\lambda_0, A_0)=|\alpha-k|.
\] 
Then, the  proof is a direct consequence of Lemma
    \ref{l:3.3}, see
\cite[Lemma 7.2]{NorisNysTerracini2015}, 
\cite[Lemma 5.7]{AbatangeloFelli2015-1}, \cite[Lemma
5.7]{AbatangeloFelliNorisNys2016} for details.
\end{proof}

\subsection{Local energy estimates}

\begin{Corollary}\label{cor:Ha}
For $\delta \in (0, \frac12\sqrt{\mu_1})$ let $r_\delta,K_\delta$ be
as in Lemma \ref{lemma:estimate-N} and $\alpha_{r_\delta}$ be as in \eqref{eq:lower-bound-H}. Then 
there exists $C_\delta > 0$ such that
\begin{align}
\label{eq:20}& H(\varphi_a, R |a|) \leq H(\varphi_a, K_\delta |a|)\bigg(\frac{R}{K_\delta}\bigg)^{2(|\alpha-k|+\delta)}
\quad \text{ for all } R>K_\delta\text{ and }|a| < \frac{r_\delta}{R}, \\
\label{eq:21}& H(\varphi_a, K_\delta |a|) \geq C_\delta |a|^{2(|\alpha - k| + \delta)} 
\quad \text{ for all } |a| < \min \left\{ \frac{r_\delta}{K_\delta}, \alpha_{r_\delta} \right\}, \\
\label{eq:24}& H(\varphi_a, K_\delta |a|) =O\left(
  |a|^{2(\sqrt{\mu_1}-\delta)}\right)\quad\text{as }a\to0.
\end{align}
\end{Corollary}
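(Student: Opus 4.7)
The plan is to deduce all three estimates from the basic identity
\[
\frac{d}{dr}\log H(\varphi_a,r) = \frac{2}{r}\,\mathcal{N}(\varphi_a,r,\lambda_a,A_a),
\]
which follows directly from \eqref{eq:74} and the definition of the frequency, by coupling it on the one hand with the upper bound on $\mathcal{N}$ given by Lemma~\ref{lemma:estimate-N} and on the other with the one-sided comparison of $H$ provided by Lemma~\ref{lemma:stima_H_sotto}. The general scheme is: integrate the frequency bound to obtain \eqref{eq:20}, specialize \eqref{eq:20} near the boundary $r = r_\delta$ to derive \eqref{eq:21}, and invoke Lemma~\ref{lemma:stima_H_sotto} in the opposite direction to get \eqref{eq:24}.

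For \eqref{eq:20}, I would first observe that every $r \in (K_\delta|a|, r_\delta)$ satisfies the hypotheses of Lemma~\ref{lemma:estimate-N} with $\tilde R := r/|a|$ (since then $\tilde R > K_\delta$ and $|a| < r_\delta/\tilde R$), so $\mathcal{N}(\varphi_a,r,\lambda_a,A_a) \le |\alpha-k|+\delta$ on that whole interval. Integrating the identity above between $K_\delta|a|$ and $R|a|$, for $R > K_\delta$ and $|a| < r_\delta/R$ so that $R|a| < r_\delta$, and then exponentiating produces \eqref{eq:20}.

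To obtain \eqref{eq:21}, I would take $R = r_\delta/|a|$ in \eqref{eq:20}, which is admissible as soon as $|a| < r_\delta/K_\delta$, getting
\[
H(\varphi_a, r_\delta) \leq H(\varphi_a, K_\delta|a|) \left(\frac{r_\delta}{K_\delta|a|}\right)^{2(|\alpha-k|+\delta)}.
\]
Solving for $H(\varphi_a, K_\delta|a|)$ and plugging in the uniform positive lower bound $H(\varphi_a, r_\delta) \geq C_{r_\delta}$ from \eqref{eq:lower-bound-H} (valid for $|a| < \alpha_{r_\delta}$) then yields \eqref{eq:21} with the explicit constant $C_\delta = C_{r_\delta}(K_\delta/r_\delta)^{2(|\alpha-k|+\delta)}$.

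For \eqref{eq:24}, I would apply Lemma~\ref{lemma:stima_H_sotto} with $r_1 = K_\delta|a|$ (legal since $K_\delta > \Upsilon_\delta$) and $r_2 = r_\delta$ (taking $r_0 = r_\delta$). Rearranging the resulting inequality gives an upper bound $H(\varphi_a, K_\delta|a|) \leq c_1\,H(\varphi_a, r_\delta)\,|a|^{2(\sqrt{\mu_1}-\delta)}$ with $c_1$ independent of $a$. The only slightly delicate point---and essentially the sole non-mechanical step of the whole argument---is to check that $H(\varphi_a, r_\delta)$ remains bounded as $a \to 0$; I would obtain this from \eqref{eq:convergence-varphi-a-1}, either via the $C^2_{\rm loc}$ convergence of $\varphi_a$ to $\varphi_0$ on the compact set $\partial D_{r_\delta} \subset \Omega \setminus \{0\}$ or via the continuity of the trace on $H^1(\Omega,\C)$, so that $\sup_{|a| < \alpha_{r_\delta}} H(\varphi_a, r_\delta) < \infty$. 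Beyond this uniform-in-$a$ boundedness I do not foresee any serious obstacle; the remainder of the proof is bookkeeping around the two earlier lemmas.
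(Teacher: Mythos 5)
Your proposal is correct and follows exactly the paper's argument: \eqref{eq:20} by integrating $\frac{d}{dr}\log H=\frac2r\mathcal N\le\frac2r(|\alpha-k|+\delta)$ over $[K_\delta|a|,R|a|]$, \eqref{eq:21} by integrating up to $r_\delta$ and invoking \eqref{eq:lower-bound-H}, and \eqref{eq:24} directly from Lemma \ref{lemma:stima_H_sotto}. The uniform boundedness of $H(\varphi_a,r_\delta)$ that you flag is indeed the only detail the paper leaves implicit, and your justification via \eqref{eq:convergence-varphi-a-1} is the right one.
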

\begin{proof}
  From \eqref{eq:74}, the definition of $\mathcal N$, and Lemma
  \ref{lemma:estimate-N} we have that 
\begin{align*}
\frac1{ H(\varphi_a,r)}\frac{d}{dr} H(\varphi_a,r)&=\frac2r
\mathcal{N}(\varphi_a, r, \lambda_a, A_a)\\
&\leq \frac2r\left( |\alpha - k| +
\delta\right)\quad\text{for all }K_\delta|a|\leq r<r_\delta\text{ with
}|a|<\frac{r_\delta}{K_\delta}
\end{align*}
so that estimate \eqref{eq:20} follows by integration over
$[K_\delta|a|,R|a|]$ and estimate \eqref{eq:21} from integration over
$[K_\delta|a|,r_\delta]$ and \eqref{eq:lower-bound-H}.
Finally \eqref{eq:24} is a direct consequence of Lemma \ref{lemma:stima_H_sotto}.
\end{proof}

\begin{Lemma} \label{lemma:estimate-varphi-a}
For $n\in\{1,\dots,n_0\}$ and $a \in \Omega$, 
let  $\varphi_n^a$ be a 
solution to \eqref{eq_eigenfunction_n} satisfying \eqref{eq:23}. Let $R_0 > 0$ be as 
in \eqref{eq:70}. For every $\delta \in (0, \sqrt{\mu_1}/2)$, there exist 
$\tilde{K}_\delta > 1$ and $\tilde{C}_\delta > 0$ such that, for 
all $R > \tilde{K}_\delta$, $a \in \Omega$ with $R |a| < R_0$, 
and $n\in\{1,\dots,n_0\}$,
\begin{align} \label{eq:estimate-varphi-j-a}
& \int_{D_{R|a|}} | (i \nabla + A_a) \varphi_n^a|^2 \, dx
\leq \tilde{C}_\delta (R|a|)^{2(\sqrt{\mu_1} - \delta)} \\
\label{eq:estimate-varphi-j-a2}& \int_{\partial D_{R|a|}} |\varphi_n^a|^2 \,ds
\leq \tilde{C}_\delta (R|a|)^{2 (\sqrt{\mu_1} - \delta) + 1} \\
\label{eq:estimate-varphi-j-a3}& \int_{D_{R|a|}} |\varphi_n^a|^2 \,dx 
\leq \tilde{C}_\delta (R|a|)^{2 (\sqrt{\mu_1} - \delta) + 2}.
\end{align}
\end{Lemma}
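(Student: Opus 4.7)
The plan is to reduce all three estimates to a single upper bound on $H(\varphi_n^a, r)$ of the form $H(\varphi_n^a, r) \leq {\rm const}\,r^{2(\sqrt{\mu_1}-\delta)}$ valid on a range $\Upsilon_\delta|a| \leq r < r_0$ for a fixed small radius $r_0$, and then convert this into bounds on $\int_{D_r}|(i\nabla+A_a)\varphi_n^a|^2$ and $\int_{D_r}|\varphi_n^a|^2$ by combining the Poincaré inequality \eqref{eq:poincare} with the uniform control on the Almgren quotient provided by Lemma~\ref{l:3.3}.

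First I would fix $r_0 \leq \min\{R_0,\sigma_0\}$ small enough so that $\Lambda r_0^2 < 1/2$. The uniform $H^1$ bound on $\{\varphi_n^a\}$, which follows from \eqref{eq_eigenfunction_n}, \eqref{eq:23}, the finiteness of $\Lambda$ in \eqref{eq:Lambda_n}, and the magnetic Hardy inequality \eqref{eq:hardy}, combined with the trace theorem, yields a uniform upper bound $H(\varphi_n^a, r_0) \leq C$ in $a$ and $n$, while \eqref{eq:lower-bound-H} gives a uniform lower bound $H(\varphi_n^a, r_0) \geq C_{r_0} > 0$ for $|a|$ sufficiently small. Applying Lemma~\ref{lemma:stima_H_sotto} with $r_1 = R|a|$ and $r_2 = r_0$, which is legitimate as soon as $R \geq \Upsilon_\delta$ and $R|a| < r_0$, then gives
\[
H(\varphi_n^a, R|a|) \leq C_1 (R|a|)^{2(\sqrt{\mu_1}-\delta)},
\]
and multiplying by $R|a|$ proves \eqref{eq:estimate-varphi-j-a2}.

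For \eqref{eq:estimate-varphi-j-a}, the two-sided control on $H(\varphi_n^a, r_0)$ together with the uniform $H^1$ bound implies $|\mathcal{N}(\varphi_n^a, r_0, \lambda_n^a, A_a)| \leq C'$, and Lemma~\ref{l:3.3} propagates this to every $r \in [R|a|, r_0)$, provided $R > \max\{\Upsilon_\delta, c_0\}$ is chosen large enough to make the remainder $\frac{c_{r_0,\delta}}{R - c_0}$ harmless. From the resulting uniform bound $\mathcal{N}(\varphi_n^a, r, \lambda_n^a, A_a) \leq M$ one has $\int_{D_r}|(i\nabla+A_a)\varphi_n^a|^2 \leq M H(\varphi_n^a, r) + \lambda_n^a \int_{D_r}|\varphi_n^a|^2$; inserting the Poincaré inequality \eqref{eq:poincare} to handle the last term and using $\Lambda r_0^2 < 1/2$ to absorb the $\int|(i\nabla+A_a)\varphi_n^a|^2$ contribution yields
\[
\int_{D_r}|(i\nabla+A_a)\varphi_n^a|^2 \, dx \leq C_2 H(\varphi_n^a, r),
\]
and combining with the previous step proves \eqref{eq:estimate-varphi-j-a}. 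Finally, applying \eqref{eq:poincare} once more at $r = R|a|$ and inserting the bounds just obtained proves \eqref{eq:estimate-varphi-j-a3}.

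The main point is essentially bookkeeping, namely ensuring that the constants $C_1, M, C_2$ can be chosen uniformly in $n \in \{1,\ldots,n_0\}$ and in $a$. Uniformity in $n$ is automatic because the index set is finite; uniformity in $a$ follows from the continuity of $a \mapsto \lambda_n^a$, the uniform $H^1$ bound on $\varphi_n^a$, and \eqref{eq:lower-bound-H}, provided $|a|$ is restricted to a sufficiently small neighbourhood of the origin. The thresholds $\tilde{K}_\delta$ and $\tilde{C}_\delta$ in the statement are then obtained by collecting $\Upsilon_\delta$, $c_0$, and the various constants coming from Lemmas~\ref{lemma:stima_H_sotto}, \ref{lemma:estimate-Mja}, and \ref{l:3.3}.
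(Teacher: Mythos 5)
Your proposal is correct and follows essentially the same route as the paper's proof: \eqref{eq:estimate-varphi-j-a2} from Lemma~\ref{lemma:stima_H_sotto} (rewritten as an upper bound on $H(\varphi_n^a,R|a|)$ in terms of $H(\varphi_n^a,r_0)$), \eqref{eq:estimate-varphi-j-a} from the uniform boundedness of the Almgren quotient at $r=R|a|$ supplied by Lemma~\ref{l:3.3} together with the Poincaré inequalities \eqref{eq:poincare}--\eqref{eq:poincare2}, and \eqref{eq:estimate-varphi-j-a3} from one more application of \eqref{eq:poincare}. The only additions are explicit bookkeeping steps (the two-sided control of $H(\varphi_n^a,r_0)$ via the trace theorem and \eqref{eq:lower-bound-H}, and the absorption under $\Lambda r_0^2<1/2$) that the paper leaves implicit by referring to \cite[Lemma 5.8]{AbatangeloFelli2015-1}.
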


\begin{proof}
 Estimate \eqref{eq:estimate-varphi-j-a2} follows from Lemma
\ref{lemma:stima_H_sotto}. 
From Lemma \ref{l:3.3} it follows that the frequency $\mathcal N$ is bounded in
$r=R|a|$ provided $R$ is sufficiently large; hence $E(\varphi_n^a,
R|a|, \lambda_n^a, A_a)$ is uniformly estimated by $H(\varphi_n^a,
R|a|)$, so that \eqref{eq:estimate-varphi-j-a2} and
\eqref{eq:poincare}--\eqref{eq:poincare2} yield
\eqref{eq:estimate-varphi-j-a}. Estimate
\eqref{eq:estimate-varphi-j-a3} can be proved combining
\eqref{eq:estimate-varphi-j-a}, \eqref{eq:estimate-varphi-j-a2} with
the Poincaré inequality \eqref{eq:poincare}. We refer to \cite[Lemma
5.8]{AbatangeloFelli2015-1} for more details in a related problem.
\end{proof}

\begin{Lemma}\label{lem:phi_tild_bdd}
For $a \in \Omega$ let $\varphi_a \in H^{1,a}_0(\Omega, \C)$ be a solution 
of \eqref{eq:equation_a}--\eqref{eq:normalization}. For some fixed
$\delta \in (0, \sqrt{\mu_1}/2)$, let $K_\delta > \Upsilon_\delta$ 
be as in Lemma~\ref{lemma:estimate-N}. 
Then, for every $R>K_\delta$,
\begin{align} 
\label{eq:estimate-varphi-a1}& \int_{D_{R|a|}} | (i \nabla + A_a) \varphi_a|^2 \,dx 
= O(H(\varphi_a, K_\delta |a|)), \quad \text{ as } |a| \to 0^+, \\
\label{eq:estimate-varphi-a2}& \int_{\partial D_{R|a|}} |\varphi_a|^2 \, ds 
= O( |a| H(\varphi_a, K_\delta |a|)), \quad \text{ as } |a| \to 0^+, \\
\label{eq:estimate-varphi-a3}& \int_{D_{R|a|}} |\varphi_a|^2 \,dx 
= O(|a|^2 H(\varphi_a, K_\delta |a|)), \quad \text{ as } |a| \to 0^+.
\end{align}
\end{Lemma}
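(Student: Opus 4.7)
The plan is to derive all three estimates as consequences of the monotonicity machinery already established, in particular the Almgren frequency bound of Lemma~\ref{lemma:estimate-N} (which yields Corollary~\ref{cor:Ha}) together with the Poincaré-type inequality \eqref{eq:poincare}. The key observation is that $H(\varphi_a, r)$ is just a rescaled $\partial D_r$-integral, while Lemma~\ref{l:3.3} controls the ratio $E/H$ at $r = R|a|$.

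First I would prove \eqref{eq:estimate-varphi-a2}, which is essentially a restatement of \eqref{eq:20}. By the definition of $H$,
\[
\int_{\partial D_{R|a|}} |\varphi_a|^2\,ds = R|a|\cdot H(\varphi_a, R|a|),
\]
and \eqref{eq:20} gives $H(\varphi_a, R|a|) \leq (R/K_\delta)^{2(|\alpha-k|+\delta)} H(\varphi_a, K_\delta|a|)$, so for $R$ fixed the right-hand side is $O(|a|\,H(\varphi_a,K_\delta|a|))$ as $|a|\to 0^+$.

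Next I would prove \eqref{eq:estimate-varphi-a1}. By Lemma~\ref{l:3.3} applied to $\varphi_{n_0}^a=\varphi_a$, for all $R>\max\{\Upsilon_\delta,c_0\}$ and $|a|$ sufficiently small we have $\mathcal{N}(\varphi_a, R|a|, \lambda_a, A_a) \leq C$ for a constant $C$ independent of $a$. Therefore
\[
E(\varphi_a, R|a|, \lambda_a, A_a) \leq C\, H(\varphi_a, R|a|).
\]
Expanding the definition of $E$,
\[
\int_{D_{R|a|}} |(i\nabla+A_a)\varphi_a|^2\,dx = E(\varphi_a, R|a|, \lambda_a, A_a) + \lambda_a \int_{D_{R|a|}}|\varphi_a|^2\,dx,
\]
so it remains to absorb the $L^2$-term. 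The Poincaré inequality \eqref{eq:poincare} (applicable since $a\in D_{R|a|}$ for $R>1$) gives, for $R|a|$ sufficiently small so that $\lambda_a (R|a|)^2 \leq 1/2$,
\[
\tfrac{1}{2}\int_{D_{R|a|}} |(i\nabla+A_a)\varphi_a|^2\,dx \leq E(\varphi_a, R|a|, \lambda_a, A_a) + \Lambda\, R|a|\int_{\partial D_{R|a|}} |\varphi_a|^2\,ds,
\]
and both terms on the right are $O(H(\varphi_a,K_\delta|a|))$ by the frequency bound and the already-proved \eqref{eq:estimate-varphi-a2}, yielding \eqref{eq:estimate-varphi-a1}.

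Finally \eqref{eq:estimate-varphi-a3} follows from \eqref{eq:estimate-varphi-a1}, \eqref{eq:estimate-varphi-a2} and a second application of \eqref{eq:poincare}: multiplying \eqref{eq:poincare} by $(R|a|)^2$ gives
\[
\int_{D_{R|a|}}|\varphi_a|^2\,dx \leq R|a|\int_{\partial D_{R|a|}} |\varphi_a|^2\,ds + (R|a|)^2\int_{D_{R|a|}} |(i\nabla+A_a)\varphi_a|^2\,dx = O\!\big(|a|^2 H(\varphi_a,K_\delta|a|)\big).
\]
There is no substantial obstacle; the only point requiring care is the verification that the hypotheses of Lemma~\ref{l:3.3} and Corollary~\ref{cor:Ha} are satisfied at the scale $r=R|a|$ (which requires $R>\max\{\Upsilon_\delta,c_0,K_\delta\}$ and $|a|$ small, implicit in the statement $R>K_\delta$ and $R|a|<R_0$), so that the chain \eqref{eq:20} $\Rightarrow$ \eqref{eq:estimate-varphi-a2} $\Rightarrow$ \eqref{eq:estimate-varphi-a1} $\Rightarrow$ \eqref{eq:estimate-varphi-a3} closes uniformly in $a$.
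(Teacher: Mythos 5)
Your argument is correct and follows essentially the same route as the paper: the frequency bound at scale $R|a|$ (Lemma~\ref{lemma:estimate-N}, equivalently Lemma~\ref{l:3.3}), the Poincar\'e inequality \eqref{eq:poincare} to absorb the $\lambda_a\int|\varphi_a|^2$ term, and the doubling estimate \eqref{eq:20} to pass from $H(\varphi_a,R|a|)$ to $H(\varphi_a,K_\delta|a|)$. The only differences are cosmetic (you derive \eqref{eq:estimate-varphi-a2} first and feed it into the absorption step, whereas the paper absorbs via \eqref{eq:poincare2} and gets \eqref{eq:estimate-varphi-a1} first), and your closing remark about checking $R$ large enough for the frequency bound is exactly the right point of care.
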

\begin{proof}
The proof follows from the boundedness of the
  frequency $\mathcal{N}(\varphi_a, R|a|, \lambda_a, A_a)$ established
  in Lemma \ref{lemma:estimate-N} and by its scaling properties. 
 For 
$\delta \in (0, \sqrt{\mu_1}/2)$ fixed, let $K_\delta >
\Upsilon_\delta$ and $r_\delta$  
be as in Lemma~\ref{lemma:estimate-N}, so that
Lemma~\ref{lemma:estimate-N} yields that 
\begin{align*}
N(\varphi_a, R|a|,\lambda_a,A_a)&=
\dfrac{
    \int_{D_{R|a|}} \abs{(i\nabla + A_a)\varphi_a}^2dx -
      \lambda_a \int_{D_{R|a|}}
                                  \abs{\varphi_a}^2dx}{H(\varphi_a,R|a|)}\\
&\notag\leq|\alpha - k| + \delta,\quad \text{for all $R>
  K_\delta$ and $|a|<\frac{r_\delta}R$}. 
\end{align*}
Then, by~\eqref{eq:poincare} and \eqref{eq:poincare2} it follows that 
\begin{multline*}
 \left(1-\Lambda r_\delta^2\bigg(1+\frac2{\sqrt{\mu_1}}\bigg)\right)\int_{D_{R|a|}}|(i\nabla + A_{a})
      \varphi_a|^2dx\\\leq
  \int_{D_{R|a|}} \abs{(i\nabla + A_a)\varphi_a}^2dx -
      \lambda_a \int_{D_{R|a|}}
                                  \abs{\varphi_a}^2dx\leq
                                  H(\varphi_a,R|a|)
(|\alpha - k| + \delta).
\end{multline*}
Then \eqref{eq:estimate-varphi-a1} follows from
\eqref{eq:20}. Estimates \eqref{eq:estimate-varphi-a2} and
\eqref{eq:estimate-varphi-a3} follow from
\eqref{eq:estimate-varphi-a1} and  the Poincaré type inequalities
\eqref{eq:poincare} and \eqref{eq:poincare2}.
\end{proof}

  \begin{remark}\label{rem:3.7}
 Let us consider the blow-up family
\begin{equation} \label{eq:tilde-varphi-a}
\tilde{\varphi}_a (x) 
:= \frac{\varphi_a (|a| x)}{\sqrt{H(\varphi_a, K_\delta |a|)}},
\end{equation}
with $K_\delta > \Upsilon_\delta$  being as in Lemma~\ref{lemma:estimate-N} for some fixed
$\delta \in (0, \sqrt{\mu_1}/2)$.
  From Lemma \ref{lem:phi_tild_bdd} it follows that, for every
    $p\in{\mathbb S}^1$ fixed,  $r_\delta>0$ as in
    Lemma  \ref{lemma:estimate-N}, and $R > K_\delta$, the  blow-up family
    $\{ \tilde{\varphi}_a \, : \, a = |a| p, \, R |a| < r_\delta \}$
    is bounded in $H^{1,p}(D_R, \C)$.
  \end{remark}

\section{Estimate on \texorpdfstring{ $ \lambda_0- \lambda_a $ }{estimate}}\label{sec:estim-texorpdfstr-}

Respectively, the Courant-Fisher characterization for $\lambda_a$ 
and $\lambda_0$ gives
\begin{equation}\label{eq:18}
\lambda_a = \min \left\{ \max_{ u \in F \setminus \{0\} } 
\tfrac{\int_{\Omega} | (i\nabla + A_a) u|^2 \,dx }{\int_{\Omega} |u|^2 \,dx} 
 :  F \text{ is a linear subspace of } H^{1,a}_0(\Omega, \C), \, \text{dim} F = n_0 \right\}
\end{equation}
and
\begin{equation}\label{eq:19}
\lambda_0 = \min \left\{ \max_{u \in F \setminus \{0\} } 
\tfrac{\int_{\Omega} |(i\nabla + A_0) u|^2 \,dx}{\int_{\Omega} |u|^2 \,dx} \, : \, 
F \text{ is a linear subspace of }  H^{1,0}_0(\Omega,\C), \, \text{dim} F = n_0 \right\}.
\end{equation}
Before proceeding, we find useful to recall the following technical
result which is proved in \cite[Lemma 6.1]{AbatangeloFelli2015-1} and concerns the maximum of quadratic forms depending on the pole $a\to0$.

\begin{Lemma}\label{l:tech}
  For every  $a\in\Omega$, let us consider a quadratic form 
\[
Q_a:\C^{n_0}\to \R,\quad 
Q_a(z_1,z_2,\dots,z_{n_0})=\sum_{j,n=1}^{n_0}M_{j,n}(a)z_j
\overline{z_n},
\]
with $M_{j,n}(a)\in\C$ such that
$M_{j,n}(a)=\overline{M_{n,j}(a)}$. Let us assume that there exist
$\gamma\in(0,+\infty)$, $a\mapsto\sigma(a)\in\R$ 
with $\sigma(a)\geq0$ and $\sigma(a)=O(|a|^{2\gamma})$ as $|a|\to 0^+$, and
$a\mapsto\mu(a)\in\R$ 
with $\mu(a)=O(1)$ as $|a|\to 0^+$, such that the coefficients $M_{j,n}(a)$
satisfy the following conditions:
\begin{enumerate}[\rm (i)]
\item $M_{n_0,n_0}(a)=\sigma(a)\mu(a)$;
\item for all $j<n_0$, $M_{j,j}(a)\to M_{j}$ as $|a|\to 0^+$ for some
$M_j\in\R$,  $M_j<0$;
\item for all $j<n_0$,
  $M_{j,n_0}(a)=\overline{M_{n_0,j}(a)}=O\left(|a|^\gamma\sqrt{\sigma(a)}\right)$
  as $|a|\to 0^+$;
\item for all $j,n<n_0$ with $j\neq n$,   $M_{j,n}(a)=O(|a|^{2\gamma})$
 as $|a|\to 0^+$;
\item there exists $M\in\N$ such that
  $|a|^{(2+M)\gamma}=o(\sigma(a))$ as  $|a|\to 0^+$.  
\end{enumerate}
Then 
\[
\max_{\substack{z\in \C^{n_0}\\
  \|z\|=1}}Q_a(z)=\sigma(a)\big(\mu(a)+o(1)\big)\quad\text{as
  }|a|\to0^+,
\]
where $\|z\|=\|(z_1,z_2,\dots,z_{n_0})\|=\big(\sum_{j=1}^{n_0}|z_j|^2\big)^{1/2}$.
\end{Lemma}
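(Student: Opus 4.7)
The plan is to derive matching lower and upper estimates on $\max_{\|z\|=1}Q_a(z)$, both of the form $\sigma(a)\mu(a)+o(\sigma(a))$ as $|a|\to 0^+$, which is precisely the claimed asymptotics $\sigma(a)(\mu(a)+o(1))$. This is essentially an exercise in two-sided quadratic form bounds in which the strict negativity of $M_{j,j}(a)$ for $j<n_0$ (assumption (ii)) is the key mechanism, allowing me to absorb all remaining error terms into a negative multiple of $\|w\|^2$.

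First, for the lower bound, I would simply test $Q_a$ on the coordinate vector $e_{n_0}=(0,\dots,0,1)\in\C^{n_0}$, which is a unit vector. By assumption (i) this gives $Q_a(e_{n_0})=M_{n_0,n_0}(a)=\sigma(a)\mu(a)$, hence $\max_{\|z\|=1}Q_a(z)\geq \sigma(a)\mu(a)$.

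For the upper bound, I would decompose any unit vector $z=(z_1,\dots,z_{n_0})$ as $(w,z_{n_0})$ with $w=(z_1,\dots,z_{n_0-1})\in\C^{n_0-1}$, so that $|z_{n_0}|^2=1-\|w\|^2$, and split $Q_a(z)$ into four pieces: the interior diagonal $\sum_{j<n_0}M_{j,j}(a)|z_j|^2$, the interior off-diagonal $\sum_{j\neq n,\,j,n<n_0}M_{j,n}(a)z_j\overline{z_n}$, the mixed part $2\Re\sum_{j<n_0}M_{j,n_0}(a)z_j\overline{z_{n_0}}$, and the corner term $M_{n_0,n_0}(a)|z_{n_0}|^2=\sigma(a)\mu(a)-\sigma(a)\mu(a)\|w\|^2$. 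Setting $c_0=-\max_{j<n_0}M_j>0$, assumption (ii) gives that the interior diagonal is at most $-\tfrac{c_0}{2}\|w\|^2$ for $|a|$ small; by Cauchy-Schwarz and (iv) the interior off-diagonal is $O(|a|^{2\gamma})\|w\|^2$; by (iii) combined with Young's inequality the mixed part is bounded by $\tfrac{c_0}{8}\|w\|^2+C|a|^{2\gamma}\sigma(a)$; and the piece $-\sigma(a)\mu(a)\|w\|^2$ of the corner contributes at most $O(|a|^{2\gamma})\|w\|^2$ by (i) and boundedness of $\mu$.

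Summing these four bounds, for $|a|$ sufficiently small the total coefficient of $\|w\|^2$ is bounded by a strictly negative constant (say $-\tfrac{c_0}{4}$), so this contribution can be dropped from the upper bound. I would thus conclude $Q_a(z)\leq \sigma(a)\mu(a)+C|a|^{2\gamma}\sigma(a)$ uniformly in $\|z\|=1$; since $|a|^{2\gamma}\sigma(a)=o(\sigma(a))$ as $|a|\to 0^+$ this matches the lower bound and yields the claim. I do not anticipate any real obstacle: the argument is a careful two-sided estimate with Young's inequality. The one delicate point, which I would emphasize, is that the exponents in assumption (iii) are calibrated precisely so that the $\sqrt{\sigma(a)}$ factor allows Young to produce an error of size $|a|^{2\gamma}\sigma(a)$ rather than $|a|^{2\gamma}$, which is what gives a relative error $o(1)$ after dividing by $\sigma(a)$; condition (v) is used to guarantee that $\sigma(a)>0$ for $|a|$ small, so that the notation $o(\sigma(a))$ is meaningful.
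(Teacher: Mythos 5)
Your proof is correct. Note that the present paper does not prove this lemma at all: it is quoted from \cite{AbatangeloFelli2015-1} (Lemma 6.1 there), so the comparison is with that reference. The proof there takes a different route: it fixes a maximizer $\xi^a$ of $Q_a$ on the unit sphere, uses $Q_a(\xi^a)\geq Q_a(e_{n_0})=\sigma(a)\mu(a)$ together with the negativity of the interior diagonal to bound $\sum_{j<n_0}|\xi^a_j|^2$, and then runs an iterative bootstrap on this quantity; hypothesis (v) (the existence of $M$ with $|a|^{(2+M)\gamma}=o(\sigma(a))$) is what terminates the iteration after finitely many steps. Your argument — a direct two-sided bound in which the mixed terms $2\Re\sum_{j<n_0}M_{j,n_0}(a)z_j\overline{z_{n_0}}=O\big(|a|^{\gamma}\sqrt{\sigma(a)}\big)\|w\|$ are absorbed by Young's inequality into the strictly negative coefficient of $\|w\|^2$ coming from (ii), at the cost of an additive error $O(|a|^{2\gamma}\sigma(a))=o(\sigma(a))$ — avoids the iteration entirely and is shorter; as you correctly point out, it uses (v) only to guarantee $\sigma(a)>0$ for small $|a|\neq 0$ so that the factorization $\sigma(a)(\mu(a)+o(1))$ makes sense. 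You are also right that the calibration of (iii) (the factor $\sqrt{\sigma(a)}$ rather than a plain power of $|a|$) is exactly what makes the Young error relatively small after dividing by $\sigma(a)$. Both approaches are valid; yours is the more economical given the hypotheses as stated.
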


\subsection{Construction of the test functions using \texorpdfstring{$ \varphi_n^0 $}{v0} }

Let $R_0$ be as in \eqref{eq:70}. For every
$R>1$, $a\in\Omega$  with
$|a|<R_0/R$ and $1 \leq n \leq n_0$ we define
\begin{equation*}
w_{n,R,a} = 
\begin{cases}
w_{n,R,a}^{int}, &\text{in } D_{R|a|}, \\[3pt]
w_{n,R,a}^{ext}, &\text{in } \Omega \setminus D_{R|a|},
\end{cases}
\end{equation*}
where
\[
w_{n,R,a}^{ext} = e^{i \alpha (\theta_a - \theta_0^a)} \varphi_n^0 
\quad \text{ in } \Omega \setminus D_{R|a|},
\]
and $w_{n,R,a}^{int}$ is the unique solution to the minimization problem
\[
\min \bigg\{ \int_{D_{R|a|}} | (i \nabla + A_a) u|^2 \,dx: \, u \in H^{1,a}(D_{R|a|},\C), \, 
u = e^{i \alpha (\theta_a - \theta_0^a)} \varphi_n^0 
\text{ on } \partial D_{R|a|} \bigg\}.
\]
We notice that $w_{n,R,a}^{ext}$ and $w_{n,R,a}^{int}$ respectively solve
\[
\begin{cases}
(i \nabla + A_a)^2 w_{n,R,a}^{ext} = \lambda_n^0 w_{n,R,a}^{ext},
&\text{in } \Omega \setminus D_{R|a|}, \\[3pt]
w_{n,R,a}^{ext} = e^{i \alpha (\theta_a - \theta_0^a)} \varphi_n^0,
&\text{on } 
\partial \left( \Omega \setminus D_{R|a|} \right),
\end{cases}
\]
and
\[
\begin{cases}
( i \nabla + A_a)^2 w_{n,R,a}^{int} = 0, &\text{in } D_{R|a|}, \\[3pt]
     w_{n,R,a}^{int} = e^{i \alpha (\theta_a - \theta_0^a)} \varphi_n^0, &\text{on } \partial
     D_{R|a|}.
   \end{cases}
\]
As a consequence of Proposition \ref{prop:fft}
we have  that, for every $R > 1$, $a \in \Omega$ such that $R |a| < R_0$, 
and $1 \leq n \leq n_0$,
\begin{align} \label{eq:estimate-1}
&\int_{D_{R|a|}} | (i \nabla + A_0) \varphi_n^0 |^2 \,dx = O(|a|^{2 \sqrt{\mu_1}}),\quad
 \int_{\partial D_{R|a|}} |\varphi_n^0|^2 \, ds = O(|a|^{2
   \sqrt{\mu_1} + 1}),\\
&\notag \text{and} \quad 
 \int_{D_{R|a|}} |\varphi_n^0|^2 \,dx = O(|a|^{2 \sqrt{\mu_1} +
  2})\quad\text{as }|a|\to0^+.
\end{align}
Using the above estimates \eqref{eq:estimate-1} and the Dirichlet
principle (see the proof of \cite[Lemma 6.2]{AbatangeloFelli2015-1}
for details in the case of  half-integer circulation), we obtain that,
for every $R>2$ and $1 \leq n \leq n_0$, 
\begin{align} \label{eq:estimate-2}
& \int_{D_{R|a|}} | (i \nabla + A_a) w_{n,R,a}^{int} |^2 \,dx =
  O(|a|^{2 \sqrt{\mu_1}}), \quad \int_{\partial D_{R|a|}}
  |w_{n,R,a}^{int}|^2 \, ds = O(|a|^{2 \sqrt{\mu_1} + 1}),  \\
& \notag\text{and}\quad \int_{D_{R|a|}} |w_{n,R,a}^{int}|^2 \,dx = O(|a|^{2 \sqrt{\mu_1} + 2}) \quad\text{as }|a|\to0^+.
\end{align}
The above estimates can be made more precise in the
case $n=n_0$ in view of \eqref{eq:131}: for every $R>2$ and
$a\in\Omega$  with $R|a|<R_0$
\begin{align} \label{eq:estWa}
&\int_{D_{R|a|}} | (i \nabla + A_0) \varphi_0 |^2 \,dx = O(|a|^{2 |\alpha-k|}),\quad
 \int_{\partial D_{R|a|}} |\varphi_0|^2 \, ds = O(|a|^{2
   |\alpha-k|+ 1}),\\
&\notag \text{and} \quad 
 \int_{D_{R|a|}} |\varphi_0|^2 \,dx = O(|a|^{2 |\alpha-k|+
  2})\quad\text{as }|a|\to0^+,
\end{align}
and consequently, in view of the Dirichlet principle,
\begin{align} \label{eq:estimate-2-n0}
& \int_{D_{R|a|}} | (i \nabla + A_a) w_{n_0,R,a}^{int} |^2 \,dx =
  O(|a|^{2 |\alpha-k|}), \quad \int_{\partial D_{R|a|}}
  |w_{n_0,R,a}^{int}|^2 \, ds = O(|a|^{2 |\alpha-k|+ 1}),  \\
& \notag\text{and}\quad \int_{D_{R|a|}} |w_{n_0,R,a}^{int}|^2 \,dx = O(|a|^{2 |\alpha-k| + 2}) \quad\text{as }|a|\to0^+,
\end{align}
with $k$ as in \eqref{eq:37}.
Furthermore, defining  
\begin{equation} \label{eq:def-Wa}
W_a(x) := \frac{\varphi_0(|a|x)}{|a|^{|\alpha - k|}}
\end{equation}
for all $R > 2$ and $a \in \Omega$ such that $R |a| < R_0$, 
\eqref{eq:131} implies that
\begin{equation} \label{eq:conv-Wa}
W_a \to \beta \psi_k \quad\text{in } H^{1,0}(D_R,\C)
\quad\text{as } |a| \to 0,
\end{equation}
where $\psi_k$ is defined in \eqref{eq:psi_j_definition}.

\subsection{Estimate of the Rayleigh quotient for \texorpdfstring{$\lambda_a $}{lambda-a} }

\begin{Lemma} \label{lemma:lower-estimate}
There exists $\mathfrak
  c\in\R$ such that 
\[
\lambda_0 - \lambda_a\geq {\mathfrak c}|a|^{2 |\alpha - k|}
\quad\text{for all }a\in\Omega,
\]
where $k$ is as in \eqref{eq:37}.
\end{Lemma}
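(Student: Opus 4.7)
The plan is to apply the Courant--Fisher characterization \eqref{eq:18} of $\lambda_a$ to the $n_0$-dimensional test space
\[
F_a=\operatorname{span}\{w_{1,R,a},\dots,w_{n_0,R,a}\}\subset H^{1,a}_0(\Omega,\C),
\]
built from the competitors of the previous subsection (for some fixed $R>2$). Linear independence of $\{w_{n,R,a}\}_{n=1}^{n_0}$ for $|a|$ small would follow from the convergence $w_{n,R,a}\to \varphi_n^0$ in $L^2(\Omega,\C)$ (itself a consequence of \eqref{eq:estimate-1}--\eqref{eq:estimate-2} and the definition of $w_{n,R,a}^{ext}$) together with the orthonormality \eqref{eq:23}, so $\dim F_a=n_0$.

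For $u=\sum_{n=1}^{n_0}z_n w_{n,R,a}$ with $z\in\C^{n_0}$, I would split every integral into an interior $D_{R|a|}$ and an exterior $\Omega\setminus D_{R|a|}$ piece. On the exterior the gauge identity
\[
(i\nabla+A_a)\bigl(e^{i\alpha(\theta_a-\theta_0^a)}v\bigr)=e^{i\alpha(\theta_a-\theta_0^a)}(i\nabla+A_0)v,
\]
valid off the segment $\Gamma_a$, reduces every exterior contribution to integrals involving only $\varphi_n^0$ and $\varphi_m^0$. Integration by parts on $\Omega$ combined with $(i\nabla+A_0)^2\varphi_n^0=\lambda_n^0\varphi_n^0$ and the orthonormality \eqref{eq:23} would then reduce these to $\lambda_n^0\delta_{nm}$ and $\delta_{nm}$, modulo the integrals on the missing disk $D_{R|a|}$. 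Subtracting $\lambda_0$ times the Gram matrix would leave a Hermitian matrix
\[
M_{nm}(a)=(\lambda_n^0-\lambda_0)\delta_{nm}+r_{nm}(a),
\]
where $r_{nm}(a)$ gathers the inner--disk contributions of $w_{n,R,a}^{int}$ and the missing disk integrals of $\varphi_n^0,\varphi_m^0$.

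The central quantitative step is to estimate the remainders $r_{nm}(a)$: from \eqref{eq:estimate-1}--\eqref{eq:estimate-2} one expects $r_{nm}(a)=O(|a|^{2\sqrt{\mu_1}})$ for $n,m<n_0$, from the sharper bounds \eqref{eq:estWa}--\eqref{eq:estimate-2-n0} one expects $r_{n_0,n_0}(a)=O(|a|^{2|\alpha-k|})$, and Cauchy--Schwarz on the mixed entries should give $r_{j,n_0}(a)=O(|a|^{\sqrt{\mu_1}+|\alpha-k|})$ for $j<n_0$. Since $|\alpha-k|\geq\sqrt{\mu_1}$, the matrix $M(a)$ fits the hypotheses of Lemma \ref{l:tech} with $\gamma=\sqrt{\mu_1}$ and $\sigma(a)=|a|^{2|\alpha-k|}$: condition (ii) follows from the simplicity of $\lambda_0$, so that $\lambda_n^0<\lambda_0$ for $n<n_0$, and condition (v) is ensured by choosing $M$ large enough. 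Lemma \ref{l:tech} would then yield $\max_{\|z\|=1}\sum_{n,m}M_{nm}(a)z_n\bar z_m=O(|a|^{2|\alpha-k|})$; since the denominator Gram matrix converges to the identity and is uniformly positive, this translates into $\max_{u\in F_a}\mathcal{R}_a(u)\leq\lambda_0+C|a|^{2|\alpha-k|}$, and Courant--Fisher gives the desired bound with $\mathfrak{c}=-C$.

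The main obstacle will be the careful bookkeeping of the various interior remainders $r_{nm}(a)$, and especially the verification that the mixed off-diagonal entries, which couple the generic vanishing order $\sqrt{\mu_1}$ of $\varphi_n^0$ with the specific order $|\alpha-k|$ of $\varphi_0$, are small enough to satisfy condition (iii) of Lemma~\ref{l:tech}; the inequality $|\alpha-k|\geq\sqrt{\mu_1}$, coming from the fact that $\sqrt{\mu_1}=\min_{j\in\Z}|\alpha-j|$, is exactly what makes the matching of exponents possible.
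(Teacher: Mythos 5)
Your proposal is correct and follows essentially the same route as the paper: the same test space spanned by the $w_{n,R,a}$, the Courant--Fisher characterization \eqref{eq:18}, the same estimates on the matrix entries (diagonal $(n_0,n_0)$ entry of order $|a|^{2|\alpha-k|}$, mixed entries of order $|a|^{\sqrt{\mu_1}+|\alpha-k|}$, remaining off-diagonal entries of order $|a|^{2\sqrt{\mu_1}}$), and the same application of Lemma~\ref{l:tech} with $\gamma=\sqrt{\mu_1}$ and $\sigma(a)=|a|^{2|\alpha-k|}$. The only cosmetic difference is that the paper orthonormalizes the competitors explicitly via Gram--Schmidt before applying Lemma~\ref{l:tech}, whereas you keep the Gram matrix in the denominator and use its convergence to the identity; both are equivalent.
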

\begin{proof}
The proof follows the lines of \cite[Lemma 6.7]{AbatangeloFelli2015-1} 
and \cite[Lemma 7.2]{AbatangeloFelliNorisNys2016}. 
Let us fix $R>2$.
By proceeding with a Gram-Schmidt process 
we define
\[
\tilde{w}_{n,a} = \frac{\Hat{w}_{n,a}}{\| \Hat{w}_{n,a} \|_{L^2(\Omega,\C)} }, 
\quad 1 \leq n \leq n_0,
\]
where 
\begin{align*}
& \Hat{w}_{n_0,a} = w_{n_0,R,a} \\
& \Hat{w}_{n,a} = w_{n,R,a} - \sum_{\ell=n+1}^{n_0} c_{\ell,n}^{a} \Hat{w}_{\ell,a}, 
\quad 1 \leq n \leq n_0 - 1,
\end{align*}
and 
\[
c_{\ell,n}^{a} = \frac{\int_{\Omega} w_{n,R,a} \overline{ \Hat{w}_{\ell,a} } \,dx }
{\| \Hat{w}_{\ell,a} \|_{L^2(\Omega,\C)}^2 }, \quad 1\leq n\leq n_0-1,\ n+1\leq\ell\leq n_0.
\]
From \eqref{eq:estimate-1}, \eqref{eq:estimate-2} 
and an induction argument it follows that, for all $\ell,n$ such that $1\leq n\leq n_0-1$ and $n+1\leq\ell\leq n_0$,
\begin{equation} \label{eq:estimate-w-j}
\| \Hat{w}_{n,a} \|^2_{L^2(\Omega,\C)} = 1 + O( |a|^{2 \sqrt{\mu_1} + 2})
\quad \text{ and } \quad 
c_{\ell,n}^{a} = O(|a|^{2 \sqrt{\mu_1} + 2})
\end{equation}
as $|a| \to 0$. Morever, from \eqref{eq:estWa} and \eqref{eq:estimate-2-n0} we have that
\begin{equation} \label{eq:estimate-w-n0}
\| \Hat{w}_{n_0,a} \|_{L^2(\Omega,\C)}^2 = \| w_{n_0,R,a} \|_{L^2(\Omega,\C)}^2 
= 1 + O(|a|^{2|\alpha - k| + 2}) \quad \text{ as } |a| \to 0,
\end{equation}
and
\begin{equation} \label{eq:estimate-c-n0j}
c_{n_0, n}^{a} = O(|a|^{|\alpha - k| + \sqrt{\mu_1} + 2}) 
\quad \text{ as } |a| \to 0, \, \text{ for } 1\leq n \leq n_0-1.
\end{equation}
Since $\text{dim}(\text{span} \{ w_{1,R,a}, \ldots, w_{n_0,R,a} \}) =
n_0$, we have that also $\text{dim}(\text{span} \{ \tilde{w}_{1,a}, \ldots, \tilde{w}_{n_0,a} \}) =
n_0$, and hence from \eqref{eq:18} we deduce that
\begin{equation*}
\lambda_a \leq \max_{ \substack{ (\alpha_1, \ldots, \alpha_{n_0}) \in \C^{n_0} \\ \sum_{n=1}^{n_0} |\alpha_n|^2 = 1 } } 
\int_{\Omega} \bigg| (i\nabla + A_a) \bigg( \sum_{n=1}^{n_0} \alpha_n \tilde w_{n,a} \bigg) \bigg|^2 \,dx,
\end{equation*}
which leads to 
\begin{equation} \label{eq:diff-eig-a-0}
\lambda_a - \lambda_0 
\leq \max_{ \substack{ (\alpha_1, \ldots, \alpha_{n_0}) \in \C^{n_0} \\ \sum_{n=1}^{n_0} |\alpha_n|^2 = 1 } } 
\sum_{n,j=1}^{n_0} \alpha_n \overline{\alpha_j} p_{n,j}^{a},
\end{equation}
where $p_{n,j}^{a} = \int_{\Omega} (i \nabla + A_a) \tilde{w}_{n,a} 
\cdot \overline{(i \nabla + A_a) \tilde{w}_{j,a}} \,dx - \lambda_0 \delta_{nj}$, with $\delta_{nj}=1$ if $n=j$ and $\delta_{nj}=0$ otherwise.
Using the estimates above we can now estimate $p_{n,j}^{a}$. First, using \eqref{eq:estWa}, 
\eqref{eq:estimate-2-n0}, and \eqref{eq:estimate-w-n0}
\begin{align*}
p_{n_0 ,n_0}^{a} 
& = \frac{\lambda_0}{\int_{\Omega} |w_{n_0,R,a}|^2 \,dx} \left( 1 - \int_{\Omega} |w_{n_0,R,a}|^2\,dx \right) \\
& + \frac{1}{\int_{\Omega} |w_{n_0,R,a}|^2\,dx} 
\left( \int_{D_{R|a|}} |(i \nabla + A_{a}) w_{n_0,R,a}^{int}|^2 \, dx 
- \int_{D_{R|a|}} | (i \nabla + A_0) \varphi_0|^2 \,dx \right) \\
&= O(|a|^{2 |\alpha - k|+2})+O(|a|^{2 |\alpha - k|})=|a|^{2 |\alpha -
  k|} O(1),\quad\text{as }|a|\to0^+.
\end{align*}
Next \eqref{eq:estimate-1}, \eqref{eq:estimate-2} and \eqref{eq:estimate-w-j} provide for $n < n_0$
\begin{align*}
p_{n,n}^{a} 
& = - \lambda_0 + \frac{1}{\| \Hat{w}_{n,a} \|_{L^2(\Omega,\C)}^2} 
\left( \lambda_n^0 + \int_{D_{R|a|}} |(i\nabla + A_a) w_{n,R,a}^{int}|^2 \, dx 
- \int_{D_{R|a|}} |(i \nabla + A_0) \varphi_n^0|^2 \,dx \right) \\
& + \frac{1}{\| \Hat{w}_{n,a} \|_{L^2(\Omega,\C)}^2}
\int_{\Omega} \bigg| (i \nabla + A_a) \bigg( \sum_{\ell=n+1}^{n_0} c_{\ell,n}^{a} \Hat{w}_{\ell,a} \bigg) \bigg|^2 \,dx \\
& - \frac{2}{\| \Hat{w}_{n,a} \|_{L^2(\Omega,\C)}^2} 
\mathfrak{Re} \sum_{\ell=n+1}^{n_0} \left\{ \overline{c_{\ell,n}^{a}} 
\int_{\Omega} (i \nabla + A_a) w_{n,R,a} \cdot \overline{(i \nabla + A_a) \Hat{w}_{\ell,a}} \, dx \right\} \\
& = (\lambda_n^0 - \lambda_0) + o(1),
\end{align*}
as $|a| \to 0$. Using \eqref{eq:estimate-1},
\eqref{eq:estimate-2}, \eqref{eq:estWa}, \eqref{eq:estimate-2-n0}, \eqref{eq:estimate-w-j}   
and \eqref{eq:estimate-c-n0j}, we have that, for all $n < n_0$,
\[
p_{n,n_0}^{a} = \overline{p_{n_0,n}^{a}} = O(|a|^{\sqrt{\mu_1} + |\alpha - k|}), 
\quad \text{ as } |a| \to 0, 
\]
while the same estimates imply that, for all $n \neq \ell < n_0$,
\[
p_{n,\ell}^{a} = \overline{p_{\ell,n}^{a}} = O(|a|^{2\sqrt{\mu_1}}), 
\quad \text{ as } |a| \to 0.
\]
Therefore, the quadratic form in \eqref{eq:diff-eig-a-0} satisfies the hypothesis of 
Lemma \ref{l:tech} with $\sigma(a) = |a|^{2 |\alpha - k|}$, 
$\gamma= \sqrt{\mu_1}$, 
$M_j=\lambda_j^0-\lambda_0<0$ for $j<n_0$ and
$M\in\N$ such that $(2+M)\sqrt{\mu_1}>2|\alpha-k|$, so that
\[
\max_{ \substack{ (\alpha_1, \ldots, \alpha_{n_0}) \in \C^{n_0} \\ \sum_{n=1}^{n_0} |\alpha_n|^2 = 1 } } 
\sum_{n,j=1}^{n_0} \alpha_n \overline{\alpha_j} p_{n,j}^{a} 
=|a|^{2 |\alpha - k|}O(1), 
\quad \text{ as } |a| \to 0.
\]
The proof is thereby complete.
\end{proof}
We notice that Lemma \ref{lemma:lower-estimate} does not give any
information about the sign of the constant $\mathfrak c$.

\subsection{Construction of the test functions using \texorpdfstring{$ \varphi_n^a $}{va} }\label{subsec:vjRa}

Let $R_0$ be as in \eqref{eq:70}, $R > 1$ and $|a| <\frac{R_0}R$. For every $1 \leq n \leq n_0$ we define
\begin{equation*}
v_{n,R,a} =
\begin{cases}
v_{n,R,a}^{int}, & \text{in } D_{R|a|}, \\[3pt]
v_{n,R,a}^{ext}, &\text{in } \Omega \setminus D_{R|a|},
\end{cases}
\end{equation*}
where 
\[
v_{n,R,a}^{ext} = e^{i \alpha ( \theta_0^a-\theta_a)} \varphi_n^a 
\quad \text{ in } \Omega \setminus D_{R|a|},
\]
and $v_{n,R,a}^{int}$ is the unique solution to the minimization problem 
\begin{equation} \label{eq:vint}
\min \left\{ \int_{D_{R|a|}} |(i \nabla + A_0) u|^2 \,dx \, : \, u \in H^{1,0}(D_{R|a|},\C), \,
u = e^{i \alpha ( \theta_0^a-\theta_a)} \varphi_n^a \text{ on } \partial D_{R|a|} \right\}.
\end{equation}
We notice that $v_{n,R,a}^{ext}$ and $v_{n,R,a}^{int}$ respectively solve
\begin{equation*}
\begin{cases}
(i \nabla + A_0)^2 v_{n,R,a}^{ext} = \lambda_n^a v_{n,R,a}^{ext}, &\text{in } \Omega \setminus D_{R|a|}, \\[3pt]
     v_{n,R,a}^{ext} = e^{- i \alpha (\theta_a - \theta_0^a)}
     \varphi_n^a, &\text{on } \partial \left( \Omega \setminus
       D_{R|a|} \right),
   \end{cases}
 \end{equation*}
and 
\begin{equation}\label{eq:vextinteq}
\begin{cases}
( i \nabla + A_0)^2 v_{n,R,a}^{int} = 0, &\text{in } D_{R|a|}, \\[3pt]
 v_{n,R,a}^{int} = e^{- i \alpha (\theta_a - \theta_0^a)}
     \varphi_n^a, &\text{on } \partial D_{R|a|}.
\end{cases}
\end{equation}
The energy estimates obtained in Lemmas \ref{lemma:estimate-varphi-a}
and \ref{lem:phi_tild_bdd} imply the following estimates for the
functions $v_{n,R,a}^{int}$.

\begin{Lemma}
For $\delta \in (0, \sqrt{\mu_1}/2)$ fixed, let $\tilde{K}_\delta$ be as in Lemma~\ref{lemma:estimate-varphi-a} 
and $R_0$ be as in \eqref{eq:70}. Let $R >
\max\{2,\tilde{K}_\delta\}$ and $1 \leq n \leq n_0$ be fixed.
For every $a\in \Omega$ 
with $|a| < R_0/R$, let $v_{n,R,a}^{int}$ be defined as in \eqref{eq:vint}. Then
\begin{align} \label{eq:estimate-vint-j-1}
& \int_{D_{R|a|}} | (i \nabla + A_0) v_{n,R,a}^{int}|^2 \, dx=O(|a|^{2(\sqrt{\mu_1} - \delta)}) \\
& \notag\int_{D_{R|a|}} |v_{n,R,a}^{int}|^2
  \,dx=O(|a|^{2 (\sqrt{\mu_1} - \delta) + 2})\quad\text{and}
\quad \int_{\partial D_{R|a|}} |v_{n,R,a}^{int}|^2 \,ds
=O(|a|^{2 (\sqrt{\mu_1} - \delta) + 1}) 
\end{align}
as $|a|\to0^+$.
\end{Lemma}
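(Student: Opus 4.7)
The plan is to apply the Dirichlet principle characterizing $v_{n,R,a}^{int}$ and to exhibit a suitable competitor whose $(i\nabla+A_0)$-energy is controlled by quantities already estimated in Lemma~\ref{lemma:estimate-varphi-a}. The most tempting candidate is $g:=e^{-i\alpha(\theta_a-\theta_0^a)}\varphi_n^a$ itself: away from the segment $\Gamma_a$ the gauge identity
\[
|(i\nabla+A_0)g|^2=|(i\nabla+A_a)\varphi_n^a|^2
\]
holds pointwise, so Lemma~\ref{lemma:estimate-varphi-a} would immediately yield the desired bound. However $\theta_0^a$ jumps by $2\pi$ across $\Gamma_a$ while $\theta_a$ does not, so the phase $e^{-i\alpha(\theta_a-\theta_0^a)}$ has a non-trivial jump $e^{-2\pi i\alpha}$ across $\Gamma_a$, and $g$ fails to lie in $H^{1,0}(D_{R|a|},\C)$. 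Handling this jump is the main obstacle.

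My proposed fix is to introduce a smooth cutoff $\eta\in C^\infty(\R^2)$ vanishing in an $|a|$-tubular neighborhood of $\Gamma_a$, equal to $1$ outside a $2|a|$-neighborhood (which, for $R$ large enough, is strictly contained in $D_{R|a|}$), and satisfying $|\nabla\eta|\le C/|a|$. Then $\tilde u:=\eta g$ lies in $H^{1,0}(D_{R|a|},\C)$, since it vanishes in neighborhoods of both $0$ and $a$ and is smooth elsewhere, and it coincides with $g$ on $\partial D_{R|a|}$. Expanding the magnetic gradient and applying the elementary inequality $|A+B|^2\le 2|A|^2+2|B|^2$, the Dirichlet principle will give
\begin{equation*}
\int_{D_{R|a|}}|(i\nabla+A_0)v_{n,R,a}^{int}|^2\,dx
\leq 2\int_{D_{R|a|}}|(i\nabla+A_a)\varphi_n^a|^2\,dx+2\int_{D_{R|a|}}|\nabla\eta|^2|\varphi_n^a|^2\,dx.
\end{equation*}

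The first term on the right is $O(|a|^{2(\sqrt{\mu_1}-\delta)})$ directly by \eqref{eq:estimate-varphi-j-a}. For the second, the support of $\nabla\eta$ has area $O(|a|^2)$ and is contained in $\{x:|x-a|\le 3|a|\}$; on this region, the Fourier expansion in Lemma~\ref{lemma:stimbe}(iv), combined with the uniform bounds (ii)--(iii) on the coefficients $\beta_j^a$ and the remainders $R_{j,a}$, yields a pointwise bound $|\varphi_n^a(x)|\le C|a|^{\sqrt{\mu_1}}$ that is uniform in $a$. The cutoff error is therefore $O(|a|^{-2}\cdot|a|^2\cdot|a|^{2\sqrt{\mu_1}})=O(|a|^{2\sqrt{\mu_1}})$, which is absorbed by the main term and yields the first estimate in \eqref{eq:estimate-vint-j-1}.

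The remaining two estimates come almost for free. The trace bound on $\partial D_{R|a|}$ is immediate from $v_{n,R,a}^{int}=g$ on $\partial D_{R|a|}$ and $|g|=|\varphi_n^a|$, combined with \eqref{eq:estimate-varphi-j-a2}. The $L^2$ bound on $D_{R|a|}$ is then obtained by applying the magnetic Poincaré inequality \eqref{eq:poincare} (with pole $0$ and radius $R|a|$) to $v_{n,R,a}^{int}$ and inserting the first and third estimates.
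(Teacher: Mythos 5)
Your argument is correct and is exactly the route the paper takes (the paper only sketches it, deferring to the half-integer case): the Dirichlet principle applied to the gauge-transformed eigenfunction cut off near the segment $\Gamma_a$ where the phase $e^{i\alpha(\theta_0^a-\theta_a)}$ jumps, combined with the energy bounds of Lemma~\ref{lemma:estimate-varphi-a}, the pointwise expansion of Lemma~\ref{lemma:stimbe} to control the cutoff error, and the Poincar\'e inequality \eqref{eq:poincare} for the $L^2$ bound. The only cosmetic point is that for $R$ close to $2$ a $2|a|$-tubular neighborhood of $\Gamma_a$ need not be contained in $D_{R|a|}$; shrinking the tube width to, say, $|a|/2$ while keeping $|\nabla\eta|\leq C/|a|$ fixes this without altering any estimate.
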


\begin{proof}
The proof follows by
combining the Dirichlet principle, a suitable cutting-off procedure,
 and
Lemma~\ref{lemma:estimate-varphi-a} (see the proof of \cite[Lemma 6.2]{AbatangeloFelli2015-1}
for details in the case of  half-integer circulation).
\end{proof}

\begin{Lemma}\label{lem:ZRa_bdd}
For  $R > \max\{2, K_\delta\}$ fixed, with $K_\delta$ being as in 
Lemma~\ref{lemma:estimate-N}, let $v_{n_0,R,a}^{int}$ be defined 
as in \eqref{eq:vint}. Then
\begin{align}
 \label{eq:estimate-vint-n0-1}
& \int_{D_{R|a|}} |(i \nabla + A_0) v_{n_0,R,a}^{int}|^2\,dx = O(H(\varphi_a, K_\delta |a|)), \\
&\label{eq:estimate-vint-n0-2} 
\int_{D_{R|a|}} |v_{n_0,R,a}^{int}|^2 \,dx = O( |a|^2 H(\varphi_a,
  K_\delta |a|)) , \quad 
\int_{\partial D_{R|a|}}
  |v_{n_0,R,a}^{int}|^2 \,ds  = O( |a| H(\varphi_a, K_\delta |a|)),
\end{align}
as  $|a| \to 0^+$.
\end{Lemma}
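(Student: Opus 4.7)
The plan is to imitate the proof of \eqref{eq:estimate-vint-j-1} from the previous lemma, simply substituting Lemma~\ref{lem:phi_tild_bdd} for Lemma~\ref{lemma:estimate-varphi-a}. The boundary estimate in \eqref{eq:estimate-vint-n0-2} is essentially free: since by construction $v_{n_0,R,a}^{int} = e^{-i\alpha(\theta_a-\theta_0^a)}\varphi_a$ on $\partial D_{R|a|}$ (see \eqref{eq:vextinteq}), and the multiplicative phase has unit modulus, one has $|v_{n_0,R,a}^{int}|=|\varphi_a|$ on $\partial D_{R|a|}$, and \eqref{eq:estimate-varphi-a2} of Lemma~\ref{lem:phi_tild_bdd} delivers the desired $O(|a|H(\varphi_a,K_\delta|a|))$ bound.

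For the energy estimate \eqref{eq:estimate-vint-n0-1}, the strategy is to invoke the Dirichlet principle and exhibit a suitable competitor in the minimization \eqref{eq:vint}. I would choose a smooth radial cutoff $\eta:\R^2\to[0,1]$ with $\eta\equiv 0$ on $D_{|a|/2}$, $\eta\equiv 1$ outside $D_{|a|}$, and $|\nabla\eta|\leq C/|a|$, and then set
\[
w(x) := \eta(x)\,e^{-i\alpha(\theta_a-\theta_0^a)(x)}\,\varphi_a(x),\qquad x\in D_{R|a|}.
\]
Since $R>2$, one has $\eta\equiv 1$ on $\partial D_{R|a|}$, so $w$ matches the prescribed boundary datum, while $w\equiv 0$ in a neighborhood of the origin guarantees $w\in H^{1,0}(D_{R|a|},\C)$, making $w$ admissible. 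The key gauge identity
\[
(i\nabla+A_0)\bigl(e^{-i\alpha(\theta_a-\theta_0^a)}\varphi_a\bigr)=e^{-i\alpha(\theta_a-\theta_0^a)}(i\nabla+A_a)\varphi_a
\]
(which holds off the segment $\Gamma_a$ and extends distributionally thanks to the matching phase jump) produces the pointwise bound
\[
|(i\nabla+A_0)w|^2\leq 2|\nabla\eta|^2|\varphi_a|^2 + 2\,\eta^2|(i\nabla+A_a)\varphi_a|^2.
\]
Integrating over $D_{R|a|}$, the first term is controlled by $C|a|^{-2}\int_{D_{|a|}}|\varphi_a|^2\,dx$, which by \eqref{eq:estimate-varphi-a3} is $O(H(\varphi_a,K_\delta|a|))$, while the second is at most $2\int_{D_{R|a|}}|(i\nabla+A_a)\varphi_a|^2\,dx=O(H(\varphi_a,K_\delta|a|))$ by \eqref{eq:estimate-varphi-a1}. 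The Dirichlet principle then transfers the bound from $w$ to $v_{n_0,R,a}^{int}$.

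Finally, for the interior $L^2$ estimate in \eqref{eq:estimate-vint-n0-2} I would apply the Poincaré-type inequality \eqref{eq:poincare} (with pole $a$ replaced by $0$, so that the magnetic potential in \eqref{eq:poincare} is $A_0$ and \eqref{eq:vextinteq} is the relevant equation) on the disk $D_{R|a|}$, which yields
\[
\frac{1}{(R|a|)^2}\int_{D_{R|a|}}|v_{n_0,R,a}^{int}|^2\,dx\leq \frac{1}{R|a|}\int_{\partial D_{R|a|}}|v_{n_0,R,a}^{int}|^2\,ds+\int_{D_{R|a|}}|(i\nabla+A_0)v_{n_0,R,a}^{int}|^2\,dx;
\]
combining the already-obtained boundary bound $O(|a|H(\varphi_a,K_\delta|a|))$ with \eqref{eq:estimate-vint-n0-1} and multiplying by $(R|a|)^2$ gives the claimed $O(|a|^2 H(\varphi_a,K_\delta|a|))$ control. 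The only conceptual subtlety is ensuring that the multi-valued factor $e^{-i\alpha(\theta_a-\theta_0^a)}$ produces a well-posed competitor in $H^{1,0}(D_{R|a|},\C)$, but this is precisely the framework of \cite{AbatangeloFelli2015-1,AbatangeloFelliNorisNys2016} where the phase jump across $\Gamma_a$ is compensated by the gauge shift between $A_a$ and $A_0$, and the cutoff removes the origin from play.
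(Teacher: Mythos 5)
Your overall strategy --- Dirichlet principle with the cut-off, gauge-transformed eigenfunction as competitor, then the boundary identification and the Poincar\'e inequality --- is exactly the route the paper intends, and the boundary estimate and the interior $L^2$ step are fine. There is, however, a genuine gap in the choice of cut-off. The phase factor $e^{i\alpha(\theta_0^a-\theta_a)}$ jumps by the factor $e^{2\pi i\alpha}\neq 1$ across the \emph{entire} segment $\Gamma_a=\{ta:\ t\in[0,1]\}$ of \eqref{eq:gamma-b}, not merely at the origin: $\theta_0^a$ jumps by $2\pi$ across the full ray $\{ta:\ t>0\}$, while $\theta_a$ only jumps across $\{ta:\ t>1\}$, so the difference is discontinuous precisely on $\Gamma_a$. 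With your cut-off ($\eta\equiv0$ on $D_{|a|/2}$, $\eta\equiv1$ outside $D_{|a|}$) the competitor $w=\eta\, e^{i\alpha(\theta_0^a-\theta_a)}\varphi_a$ still has a genuine jump across $\{ta:\ 1/2<t\le 1\}$, where $\eta\neq0$ and $\varphi_a$ is generically nonzero; hence $w\notin H^{1}(D_{R|a|},\C)$ and is not admissible in \eqref{eq:vint}. The parenthetical claim that the gauge identity ``extends distributionally thanks to the matching phase jump'' is not correct inside $D_{R|a|}$: that compensation is what makes $v_{n_0,R,a}^{ext}$ well defined on $\Omega\setminus D_{R|a|}$, where $\Gamma_a$ is absent, but in the interior the discontinuity is real and contributes a singular measure to the distributional gradient.

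The fix is small and is precisely what the hypothesis $R>2$ is for: take $\eta\equiv0$ on $D_{3|a|/2}$ and $\eta\equiv1$ outside $D_{2|a|}$, still with $|\nabla\eta|\le C/|a|$, so that $w$ vanishes on a neighbourhood of \emph{all} of $\Gamma_a$ (and of both singular points $0$ and $a$) and still matches the boundary datum on $\partial D_{R|a|}$ because $R|a|>2|a|$. The rest of your computation then goes through verbatim: the gradient-of-cut-off term is bounded by $C|a|^{-2}\int_{D_{2|a|}}|\varphi_a|^2\,dx\le C|a|^{-2}\int_{D_{R|a|}}|\varphi_a|^2\,dx=O(H(\varphi_a,K_\delta|a|))$ by \eqref{eq:estimate-varphi-a3}, and the remaining term is controlled by \eqref{eq:estimate-varphi-a1}; the boundary and Poincar\'e arguments are unaffected.
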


\begin{proof}
The proof follows from the estimates of Lemma
  \ref{lem:phi_tild_bdd},  a suitable cutting-off procedure,
and the Dirichlet principle (see \eqref{eq:vint}).
\end{proof}

  \begin{remark}\label{rem:4.5}
For all $R > 2$ and $a\in\Omega$ with $|a| < R_0/R$ we consider the blow-up family
\begin{equation} \label{eq:ZaR}
Z_a^R(x) := \frac{ v_{n_0,R,a}^{int} (|a|x) }{ \sqrt{ H( \varphi_a, K_\delta |a|) }},
\end{equation}
with $K_\delta$ as in Lemma \ref{lemma:estimate-N}  for some fixed
$\delta \in (0, \sqrt{\mu_1}/2)$. From Lemma \ref{lem:ZRa_bdd} it
follows that, for every
    $p\in{\mathbb S}^1$ fixed,  $r_\delta>0$ as in
    Lemma  \ref{lemma:estimate-N}, and $R > \max\{K_\delta,2\}$, the
    family of functions 
$\{ Z_a^R \, : \, a = |a| p \in \Omega, \, |a| < r_\delta/R \}$  is bounded in $H^{1,0}(D_R,\C)$.
\end{remark}

\subsection{Estimate of the Rayleigh quotient for \texorpdfstring{$ \lambda_0 $}{lambda-0}}

An estimate from above for the limit eigenvalue $\lambda_0$ in terms
of the approximating eigenvalue $\lambda_a$ can be obtained by
choosing as test functions in \eqref{eq:19} an orthonormal family
constructed starting from the functions
$\{v_{n,R,a}\}_{n=1,\dots,n_0}$, as done in the following.

\begin{Lemma} \label{lemma:upper-estimate}
For $\delta \in (0, \frac12\sqrt{\mu_1})$ fixed,  let $r_\delta,K_\delta$ be
as in Lemma \ref{lemma:estimate-N} and $\alpha_{r_\delta}$ be as in \eqref{eq:lower-bound-H}. Then 
there exists ${\mathfrak d}_\delta > 0$ such that
\[
\lambda_0 - \lambda_a \leq {\mathfrak d}_\delta H(\varphi_a, K_\delta |a|),
\]
for all $a\in\Omega$ such that $ |a| < \min \big\{
  \frac{r_\delta}{K_\delta}, \alpha_{r_\delta} \big\}$.
\end{Lemma}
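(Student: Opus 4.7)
The plan is to mirror the argument of Lemma \ref{lemma:lower-estimate}, interchanging the roles of $A_0$ and $A_a$, using the family $\{v_{n,R,a}\}_{n=1}^{n_0}\subset H^{1,0}_0(\Omega,\C)$ constructed in Subsection~\ref{subsec:vjRa} as test functions in the Courant-Fisher characterization \eqref{eq:19} of $\lambda_0$. Fix $R>\max\{2,K_\delta\}$. First I would orthonormalize this family by a Gram-Schmidt procedure started from $v_{n_0,R,a}$: set $\Hat v_{n_0,a}=v_{n_0,R,a}$, and downward inductively
\[
\Hat v_{n,a} = v_{n,R,a} - \sum_{\ell=n+1}^{n_0} d_{\ell,n}^a\, \Hat v_{\ell,a}, \qquad d_{\ell,n}^a = \frac{\int_\Omega v_{n,R,a}\overline{\Hat v_{\ell,a}}\,dx}{\|\Hat v_{\ell,a}\|_{L^2(\Omega,\C)}^2},
\]
and $\tilde v_{n,a}=\Hat v_{n,a}/\|\Hat v_{n,a}\|_{L^2(\Omega,\C)}$. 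The gauge identity $(i\nabla+A_0)(e^{i\alpha(\theta_0^a-\theta_a)}\varphi_n^a)=e^{i\alpha(\theta_0^a-\theta_a)}(i\nabla+A_a)\varphi_n^a$ on $\Omega\setminus D_{R|a|}$ (valid since $\Gamma_a\subset D_{R|a|}$) reduces the exterior $L^2$ and energy integrals of $v_{n,R,a}^{ext}$ to those of $\varphi_n^a$; combined with the $L^2$-orthonormality \eqref{eq:23} of $\{\varphi_n^a\}$ and the interior estimates \eqref{eq:estimate-vint-j-1}--\eqref{eq:estimate-vint-n0-2}, this yields $\|\Hat v_{n,a}\|_{L^2(\Omega,\C)}^2=1+o(1)$ and smallness of each $d_{\ell,n}^a$ as $|a|\to 0$.

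Applying \eqref{eq:19} to $\mathrm{span}\{\tilde v_{1,a},\dots,\tilde v_{n_0,a}\}$ then gives
\[
\lambda_0-\lambda_a \leq \max_{\substack{(\alpha_1,\dots,\alpha_{n_0})\in\C^{n_0}\\ \sum_n|\alpha_n|^2=1}} \sum_{n,j=1}^{n_0}\alpha_n\overline{\alpha_j}\,q_{n,j}^a,
\]
with $q_{n,j}^a=\int_\Omega(i\nabla+A_0)\tilde v_{n,a}\cdot\overline{(i\nabla+A_0)\tilde v_{j,a}}\,dx-\lambda_a\delta_{nj}$. Splitting each integral between $D_{R|a|}$ and its complement, using the gauge identity and the eigenvalue equation \eqref{eq_eigenfunction_n} on the exterior, I would verify: $q_{n_0,n_0}^a=O(H(\varphi_a,K_\delta|a|))$ via \eqref{eq:estimate-vint-n0-1} and \eqref{eq:estimate-varphi-a1}; $q_{n,n}^a\to\lambda_n^0-\lambda_0<0$ for $n<n_0$ by the continuity of eigenvalues and the smallness of interior integrals \eqref{eq:estimate-vint-j-1}; $q_{n,n_0}^a=O(|a|^{\sqrt{\mu_1}-\delta}\sqrt{H(\varphi_a,K_\delta|a|)})$ via Cauchy--Schwarz combined with the interior estimates; and $q_{n,\ell}^a=O(|a|^{2(\sqrt{\mu_1}-\delta)})$ for $n\neq\ell<n_0$.

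The proof concludes by invoking Lemma~\ref{l:tech} with $\sigma(a)=H(\varphi_a,K_\delta|a|)$, $\gamma=\sqrt{\mu_1}-\delta$, $M_j=\lambda_j^0-\lambda_0<0$ for $j<n_0$, and $M\in\N$ chosen so that $(2+M)(\sqrt{\mu_1}-\delta)>2(|\alpha-k|+\delta)$; the required bounds $\sigma(a)=O(|a|^{2(\sqrt{\mu_1}-\delta)})$ and $\sigma(a)\geq C_\delta|a|^{2(|\alpha-k|+\delta)}$ are exactly \eqref{eq:24} and \eqref{eq:21}. The lemma delivers the maximum of the quadratic form as $H(\varphi_a,K_\delta|a|)\,O(1)$, establishing the claim. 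The main obstacle I expect lies in the sharp diagonal bound for $q_{n_0,n_0}^a$: unlike Lemma~\ref{lemma:lower-estimate}, where a fixed power of $|a|$ suffices for the corresponding term, one must here control the interior energies of both $v_{n_0,R,a}^{int}$ and $\varphi_a$ by the circular energy $H(\varphi_a,K_\delta|a|)$ at radius $K_\delta|a|$, which is precisely what the refined blow-up estimates of Lemmas~\ref{lem:phi_tild_bdd} and \ref{lem:ZRa_bdd} are designed to supply.
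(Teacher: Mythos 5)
Your proposal is correct and follows essentially the same route as the paper: Gram--Schmidt orthonormalization of the $v_{n,R,a}$ starting from $v_{n_0,R,a}$, the Courant--Fisher bound \eqref{eq:19}, the same estimates on the matrix entries $q_{n,j}^a$, and Lemma~\ref{l:tech} with $\gamma=\sqrt{\mu_1}-\delta$ and $\sigma(a)=H(\varphi_a,K_\delta|a|)$ (your condition on $M$ is algebraically identical to the paper's). The only cosmetic difference is that the radius should be taken with $R>\max\{2,K_\delta,\tilde K_\delta\}$ so that Lemma~\ref{lemma:estimate-varphi-a} applies to the indices $n<n_0$.
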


\begin{proof}
In view of \eqref{eq:21} it is enough to prove that $\lambda_0 -
\lambda_a\leq O (H(\varphi_a, K_\delta |a|))$ as $|a|\to0^+$.

Let us fix $R> \max\{2, K_\delta, \tilde{K}_\delta\}$, with $\tilde{K}_\delta$ as in Lemma~\ref{lemma:estimate-varphi-a}. 
As in the proof of Lemma~\ref{lemma:lower-estimate}, we use a Gram-Schmidt 
process, that is we define
\[
\tilde{v}_{n,a} = \frac{\Hat{v}_{n,a}}{\| \Hat{v}_{n,a} \|_{L^2(\Omega,\C)}}
\quad 1 \leq n \leq n_0,
\]
where
\begin{align*}
& \Hat{v}_{n_0,a} = v_{n_0,R,a}, \\
& \Hat{v}_{n,a} = v_{n,R,a} - \sum_{\ell=n+1}^{n_0} d_{\ell,n}^{a} \Hat{v}_{\ell,a},
\quad 1 \leq n \leq n_0 - 1,
\end{align*}
and 
\[
d_{\ell,n}^{a} = \frac{ \int_{\Omega} v_{n,R,a} \overline{\Hat{v}_{\ell,a}} \,dx }
{\| \Hat{v}_{\ell,a} \|_{L^2(\Omega,\C)}^2 }, \quad 1\leq n\leq n_0-1,\ n+1\leq\ell\leq n_0.
\]
From \eqref{eq:estimate-varphi-j-a3}, \eqref{eq:estimate-vint-j-1} and an induction argument 
it follows that, for every $1\leq n\leq n_0-1$ and $n+1\leq\ell\leq n_0$,
\begin{equation} \label{eq:estimate-v-j}
\| \Hat{v}_{n,a} \|_{L^2(\Omega,\C)}^2 = 1 + O(|a|^{2( \sqrt{\mu_1}- \delta) + 2 })
\quad \text{ and } \quad d_{\ell,n}^{a} = O(|a|^{2( \sqrt{\mu_1}- \delta) + 2}), 
\end{equation}
as $|a| \to 0$. Moreover, from \eqref{eq:estimate-varphi-a3} and \eqref{eq:estimate-vint-n0-2}, 
we have that
\begin{equation} \label{eq:estimate-v-n0}
\| \Hat{v}_{n_0,a} \|_{L^2(\Omega,\C)}^2 = 1 + O(|a|^2 H(\varphi_a, K_\delta |a|)) 
\quad \text{ as } |a| \to 0,
\end{equation}
and, for $1\leq n\leq n_0-1$,
\begin{equation} \label{eq:estimate-d-n0j}
d_{n_0, n}^{a} = O(|a|^{\sqrt{\mu_1} - \delta + 2} \sqrt{H(\varphi_a, K_\delta |a|)}) 
\quad \text{ as } |a| \to 0.
\end{equation}
Since $\text{dim}(\text{span} \{ v_{1,R,a}, \ldots, v_{n_0,R,a} \}) =
n_0$, we have that also $\text{dim}(\text{span} \{ \tilde{v}_{1,a}, \ldots, \tilde{v}_{n_0,a} \}) =
n_0$, and hence from \eqref{eq:19} we deduce that
\begin{equation*}
\lambda_0 \leq \max_{ \substack{ (\alpha_1, \ldots, \alpha_{n_0}) \in \C^{n_0} \\ \sum_{n=1}^{n_0} |\alpha_n|^2 = 1 } } 
\int_{\Omega} \bigg| (i\nabla + A_0) \bigg( \sum_{n=1}^{n_0} \alpha_n  \tilde v_{n,a} \bigg) \bigg|^2 \,dx,
\end{equation*}
which leads to 
\begin{equation} \label{eq:diff-eig-0-a}
\lambda_0 - \lambda_a 
\leq \max_{ \substack{ (\alpha_1, \ldots, \alpha_{n_0}) \in \C^{n_0} \\ \sum_{n=1}^{n_0} |\alpha_n|^2 = 1 } } 
\sum_{n,j=1}^{n_0} \alpha_n \overline{\alpha_j} q_{n,j}^{a},
\end{equation}
where $q_{n,j}^{a} = \int_{\Omega} (i \nabla + A_0) \tilde{v}_{n,a} 
\cdot \overline{(i \nabla + A_0) \tilde{v}_{j,a}} \,dx - \lambda_a \delta_{nj}$.
Using the results above we can now estimate $q_{n,j}^{a}$. 
First, using 
\eqref{eq:estimate-vint-n0-1}, \eqref{eq:estimate-varphi-a1}, and \eqref{eq:estimate-v-n0}
\begin{align*}
q_{n_0 n_0}^{a} 
& = \frac{\lambda_a}{\int_{\Omega} |v_{n_0,R,a}|^2 \,dx} \left( 1 - \int_{\Omega} |v_{n_0,R,a}|^2\,dx \right) \\
& + \frac{1}{\int_{\Omega} |v_{n_0,R,a}|^2\,dx} 
\left( \int_{D_{R|a|}} |(i \nabla + A_0) v_{n_0,R,a}^{int}|^2 \, dx 
- \int_{D_{R|a|}} | (i \nabla + A_{a}) \varphi_a|^2 \,dx \right) \\
& = H(\varphi_a, K_\delta |a|)O(1),
\end{align*}
as $|a| \to 0^+$.
Next 
\eqref{eq:estimate-vint-j-1}, \eqref{eq:estimate-varphi-j-a},
\eqref{eq:estimate-v-j}, and the fact that $\lambda_n^a\to\lambda_n^0$ as $|a|\to0$, provide, for $n < n_0$,
\begin{align*}
q_{n,n}^{a} 
& = - \lambda_a + \frac{1}{\| \Hat{v}_{n,a} \|_{L^2(\Omega,\C)}^2} 
\left( \lambda_n^a + \int_{D_{R|a|}} |(i\nabla + A_0) v_{n,R,a}^{int}|^2 \, dx 
- \int_{D_{R|a|}} |(i \nabla + A_a) \varphi_n^a|^2 \,dx \right) \\
& + \frac{1}{\| \Hat{v}_{n,a} \|_{L^2(\Omega,\C)}^2}
\int_{\Omega} \bigg| (i \nabla + A_0) \bigg( \sum_{\ell=n+1}^{n_0} d_{\ell,n}^{a} \Hat{v}_{\ell,a} \bigg) \bigg|^2 \,dx \\
& - \frac{2}{\| \Hat{v}_{n,a} \|_{L^2(\Omega,\C)}^2} 
\mathfrak{Re} \sum_{\ell=n+1}^{n_0} \left\{ \overline{d_{\ell,n}^{a}} 
\int_{\Omega} (i \nabla + A_0) v_{n,R,a} \cdot \overline{(i \nabla + A_0) \Hat{v}_{\ell,a}} \, dx \right\}\\
& = \lambda_n^0 - \lambda_0 + o(1),
\end{align*}
as $|a| \to 0$. Now, using \eqref{eq:estimate-varphi-j-a}, \eqref{eq:estimate-varphi-a1}, 
\eqref{eq:estimate-vint-j-1}, \eqref{eq:estimate-vint-n0-1},
\eqref{eq:estimate-v-j}, \eqref{eq:estimate-v-n0},   
and \eqref{eq:estimate-d-n0j}, we prove that, for all $n < n_0$,
\[
q_{n,n_0}^{a} = \overline{q_{n_0,n}^{a}} = O\left(|a|^{\sqrt{\mu_1} - \delta} 
\sqrt{H(\varphi_a, K_\delta |a|)}\right), 
\quad \text{ as } |a| \to 0^+, 
\]
while the same estimates imply that, for all $n \neq \ell < n_0$,
\[
q_{n,\ell}^{a} = \overline{q_{\ell,n}^{a}} = O(|a|^{2(\sqrt{\mu_1} - \delta)}), 
\quad \text{ as } |a| \to 0^+.
\]
Therefore, the quadratic form in \eqref{eq:diff-eig-0-a} satisfies the hypothesis of 
Lemma \ref{l:tech} with $\gamma = \sqrt{\mu_1} - \delta$,  
$\sigma(a) = H(\varphi_a, K_\delta |a|)=O(|a|^{2\gamma})$ (by \eqref{eq:24}), 
$M_j=\lambda_j^0-\lambda_0<0$ and $M$ any natural number such that $M > 2 (|\alpha -k| - \sqrt{\mu_1} + 2\delta)/(\sqrt{\mu_1}-\delta)$ 
by Corollary \ref{cor:Ha}.
Therefore the right hand side in \eqref{eq:diff-eig-0-a} satisfies
\[
\max_{ \substack{ (\alpha_1, \ldots, \alpha_{n_0}) \in \C^{n_0} \\ \sum_{n=1}^{n_0} |\alpha_n|^2 = 1 } } 
\sum_{n,j=1}^{n_0} \alpha_n \overline{\alpha_j} q_{n,j}^{a} 
= H(\varphi_a, K_\delta |a|) O(1),
\]
as $|a| \to 0^+$. Then the conclusion follows from \eqref{eq:diff-eig-0-a}.
\end{proof}

\subsection{Energy estimates} \label{subsec:energy}

\begin{Corollary}\label{cor:l0-la_preliminary}
For $\delta \in (0, \frac12\sqrt{\mu_1})$ fixed,  let $K_\delta$ be
as in Lemma \ref{lemma:estimate-N}. Then
\begin{enumerate}[\rm (i)]
\item $|\lambda_0-\lambda_a|=O(1)\max\{H(\varphi_a, K_\delta |a|) ,|a|^{2|\alpha-k|}\}$ as
$a\to0$;
\item $|\lambda_0-\lambda_a|=O\Big((H(\varphi_a, K_\delta |a|))^{\frac{|\alpha-k|}{|\alpha-k|+\delta}}\Big)$ as $a \to0$.
\end{enumerate}
\end{Corollary}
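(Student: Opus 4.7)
The plan is to derive both estimates by sandwiching $\lambda_0-\lambda_a$ between the lower bound of Lemma \ref{lemma:lower-estimate} and the upper bound of Lemma \ref{lemma:upper-estimate}, and then, for part (ii), to convert the power of $|a|$ appearing in that sandwich into a power of $H(\varphi_a,K_\delta|a|)$ via the lower bound \eqref{eq:21} in Corollary \ref{cor:Ha}.

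First I would combine the two one-sided inequalities. Lemma \ref{lemma:upper-estimate} yields the upper bound $\lambda_0-\lambda_a\le {\mathfrak d}_\delta H(\varphi_a,K_\delta|a|)$ for all $a$ with $|a|$ sufficiently small (say $|a|<\min\{r_\delta/K_\delta,\alpha_{r_\delta}\}$), while Lemma \ref{lemma:lower-estimate} provides a real constant $\mathfrak c\in\R$ such that $\lambda_0-\lambda_a\ge {\mathfrak c}|a|^{2|\alpha-k|}$. Since $H(\varphi_a,K_\delta|a|)>0$ and $|a|^{2|\alpha-k|}>0$, these two bounds together imply
\[
|\lambda_0-\lambda_a|\le \max\big\{{\mathfrak d}_\delta H(\varphi_a,K_\delta|a|),\;|{\mathfrak c}|\,|a|^{2|\alpha-k|}\big\}
\]
for all $a$ in a neighborhood of $0$, which is precisely statement (i).

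For (ii) the remaining task is to estimate the second term in the maximum above by a power of $H(\varphi_a,K_\delta|a|)$. Estimate \eqref{eq:21} in Corollary \ref{cor:Ha} gives $H(\varphi_a,K_\delta|a|)\ge C_\delta |a|^{2(|\alpha-k|+\delta)}$ for $|a|$ small, hence
\[
|a|^{2|\alpha-k|}\le \Big(\tfrac{1}{C_\delta}H(\varphi_a,K_\delta|a|)\Big)^{\frac{|\alpha-k|}{|\alpha-k|+\delta}}.
\]
Moreover, since $H(\varphi_a,K_\delta|a|)\to 0$ as $a\to 0$ (see \eqref{eq:24}) and the exponent $\frac{|\alpha-k|}{|\alpha-k|+\delta}\in(0,1)$, one has $H(\varphi_a,K_\delta|a|)\le (H(\varphi_a,K_\delta|a|))^{|\alpha-k|/(|\alpha-k|+\delta)}$ for $|a|$ small enough. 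Plugging these two bounds into the maximum in (i) gives the estimate of (ii).

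The proof is essentially a bookkeeping of the preceding lemmas, so there is no genuine obstacle; the only point requiring a bit of care is that Lemma \ref{lemma:lower-estimate} does not specify the sign of ${\mathfrak c}$, which is why one needs the bound from Lemma \ref{lemma:upper-estimate} to control $\lambda_0-\lambda_a$ from above and then convert the unsigned ${\mathfrak c}|a|^{2|\alpha-k|}$ term via \eqref{eq:21} in the final step.
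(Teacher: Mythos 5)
Your proposal is correct and follows the same route as the paper: part (i) is obtained by sandwiching $\lambda_0-\lambda_a$ between the bounds of Lemmas \ref{lemma:lower-estimate} and \ref{lemma:upper-estimate}, and part (ii) follows by converting $|a|^{2|\alpha-k|}$ into a power of $H(\varphi_a,K_\delta|a|)$ via estimate \eqref{eq:21} of Corollary \ref{cor:Ha}. Your explicit case analysis on the sign of $\lambda_0-\lambda_a$ and the remark that $H\le H^{|\alpha-k|/(|\alpha-k|+\delta)}$ for $H$ small merely spell out details the paper leaves implicit.
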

\begin{proof}
Estimate (i) is a direct consequence of Lemmas
\ref{lemma:lower-estimate} and \ref{lemma:upper-estimate}.
Corollary \ref{cor:Ha} implies that 
$|a|^{2|\alpha-k|}=O\big((H(\varphi_a, K_\delta |a|))^{\frac{|\alpha-k|}{|\alpha-k|+\delta}}\big)$ 
as $a\to0$, so that (ii) follows from (i).
\end{proof}

\section{Blow-up analysis}\label{sec:blow-up-analysis}
In order to obtain a more precise estimate of the order of vanishing
of the eigenvalue variation $|\lambda_0-\lambda_a|$ than Corollary
\ref{cor:l0-la_preliminary}, we have now to compare the order of
$H(\varphi_a, K_\delta |a|)$ with $|a|^{2|\alpha-k|}$. We observe that
the estimates obtained so far (in particular Corollary \ref{cor:Ha})
are not enough to decide what is the dominant term among
$H(\varphi_a, K_\delta |a|)$ and $|a|^{2|\alpha-k|}$. To this aim, our
next step is a blow-up analysis for scaled eigenfunctions
\eqref{eq:tilde-varphi-a} along a fixed direction $p\in{\mathbb S}^1$. In order to identify the limit profile of
the blow-up family \eqref{eq:tilde-varphi-a}, the following energy estimate of the difference 
between approximating and limit scaled eigenfunctions plays a
crucial role.

Let ${\mathcal
  D}^{1,2}_{ 0}(\R^2,\C)$ be the completion of $C^\infty_{\rm
  c}(\R^2\setminus\{0\},\C)$ with respect to the magnetic Dirichlet
norm 
\begin{equation*}
  \|u\|_{{\mathcal D}^{1,2}_{0}(\R^2,\C)}:=
\bigg(\int_{\R^2}\big|(i\nabla +A_{0})u(x)\big|^2\,dx\bigg)^{\!\!1/2}.
\end{equation*}

\begin{Theorem}[Energy estimates for eigenfunction variation]\label{thm:energy_estimates}
Let $p\in{\mathbb S}^1$ be fixed. For some fixed 
$\delta \in (0, \sqrt{\mu_1}/2)$, let $K_\delta > \Upsilon_\delta$ 
be as in Lemma~\ref{lemma:estimate-N}. 
For every $R>\max\{2,K_\delta\}$ and $a=|a|p\in\Omega$ such that
$|a|<R_0/R$, 
let $v_{n_0,R,a}$ be as in $\S$ \ref{subsec:vjRa}. 
Then 
\[
\| v_{n_0,R,a} - \varphi_0\|_{H^{1,0}_0(\Omega,\C)} \leq C\Big(h(p,a,R)+g(p,a,R)\Big)\sqrt{H(\varphi_a, K_\delta |a|)}
\]
where $C>0$ is independent of $a,R,p$, 
\begin{equation*}
h(p,a,R)
=\sup_{\substack{\varphi\in \mathcal D^{1,2}_0(\R^2,\C)\\
      \|\varphi\|_{\mathcal D^{1,2}_0(\R^2,\C)}=1}} \bigg|\int_{\partial D_{R}}
\left(e^{i\alpha (\theta_0^p-\theta_p)} (i\nabla+A_p)\tilde\varphi_a-(i\nabla+A_0) Z_a^R\right)\cdot
  \nu\,\overline{\varphi}\,d\sigma\bigg|,
\end{equation*}
and, 
for $p$ and $R$ fixed, 
\[
h(p,a,R)=O(1)\quad\text{and}\quad g(p,a,R)=o(1)
\]
as $|a|\to0^+$.
\end{Theorem}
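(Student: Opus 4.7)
The plan is to estimate $\psi := v_{n_0,R,a}-\varphi_0 \in H^{1,0}_0(\Omega,\C)$ by deriving an energy identity and isolating the jump of the magnetic normal derivative of $v_{n_0,R,a}$ across $\partial D_{R|a|}$, which after rescaling becomes exactly the boundary expression appearing in the definition of $h(p,a,R)$. Testing the weak formulations of the equations satisfied by $v_{n_0,R,a}^{int}$ in $D_{R|a|}$ and $v_{n_0,R,a}^{ext}=e^{i\alpha(\theta_0^a-\theta_a)}\varphi_a$ in $\Omega\setminus D_{R|a|}$ against $\psi$, subtracting the corresponding weak formulation for $\varphi_0$, and integrating by parts on either side of $\partial D_{R|a|}$ (where $v_{n_0,R,a}$ is only continuous), I would obtain
\begin{multline*}
\|(i\nabla+A_0)\psi\|_{L^2(\Omega)}^2
= \lambda_a\|\psi\|_{L^2(\Omega)}^2 + (\lambda_a-\lambda_0)\int_\Omega \varphi_0\,\overline{\psi}\,dx
- \lambda_a\int_{D_{R|a|}} v_{n_0,R,a}^{int}\,\overline{\psi}\,dx \\
+ i\int_{\partial D_{R|a|}}\Big[(i\nabla+A_0)v_{n_0,R,a}^{ext}-(i\nabla+A_0)v_{n_0,R,a}^{int}\Big]\cdot\nu\,\overline{\psi}\,ds.
\end{multline*}

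Next I would rescale the boundary term. Using the gauge identity $(i\nabla+A_0)[e^{i\alpha(\theta_0^a-\theta_a)}\varphi_a]=e^{i\alpha(\theta_0^a-\theta_a)}(i\nabla+A_a)\varphi_a$ valid off the cut $\Gamma_a$, together with the homogeneity $A_a(|a|y)=|a|^{-1}A_p(y)$, $\theta_0^a(|a|y)=\theta_0^p(y)$, $\theta_a(|a|y)=\theta_p(y)$, and the definitions \eqref{eq:tilde-varphi-a} and \eqref{eq:ZaR} of $\tilde\varphi_a$ and $Z_a^R$, the jump integral transforms into
\[
i\sqrt{H(\varphi_a,K_\delta|a|)}\int_{\partial D_R}\Big[e^{i\alpha(\theta_0^p-\theta_p)}(i\nabla+A_p)\tilde\varphi_a-(i\nabla+A_0)Z_a^R\Big]\cdot\nu\,\overline{\Psi_a(y)}\,d\sigma,
\]
where $\Psi_a(y):=\psi(|a|y)$ extended by zero to $\R^2$. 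By the scale-invariance of the magnetic Dirichlet form, $\Psi_a\in\mathcal{D}^{1,2}_0(\R^2,\C)$ with $\|\Psi_a\|_{\mathcal{D}^{1,2}_0(\R^2,\C)}=\|(i\nabla+A_0)\psi\|_{L^2(\Omega)}$, and testing with $\Psi_a/\|\Psi_a\|_{\mathcal{D}^{1,2}_0(\R^2,\C)}$ in the supremum defining $h$ bounds the jump term by $h(p,a,R)\sqrt{H(\varphi_a,K_\delta|a|)}\,\|(i\nabla+A_0)\psi\|_{L^2(\Omega)}$. The two bulk terms $(\lambda_a-\lambda_0)\int_\Omega\varphi_0\,\overline{\psi}\,dx$ and $\lambda_a\int_{D_{R|a|}}v_{n_0,R,a}^{int}\,\overline{\psi}\,dx$ are handled by Cauchy--Schwarz together with Corollary \ref{cor:l0-la_preliminary}(ii), the energy estimates \eqref{eq:estWa} and \eqref{eq:estimate-vint-n0-2}, and the Poincar\'e inequality; each is $o(1)\cdot\sqrt{H(\varphi_a,K_\delta|a|)}\,\|(i\nabla+A_0)\psi\|_{L^2(\Omega)}$, contributing to $g(p,a,R)$.

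The hard part is the non-coercive term $\lambda_a\|\psi\|_{L^2}^2$: since $\lambda_0=\lambda_{n_0}^0$ is not in general the smallest eigenvalue of $(i\nabla+A_0)^2$, it cannot be absorbed into $\|(i\nabla+A_0)\psi\|_{L^2}^2$. I would expand $\psi=\sum_{j\geq 1}\tau_j(a)\,\phi_j^0$ in the orthonormal $L^2$-basis $\{\phi_j^0\}$ of eigenfunctions of $(i\nabla+A_0)^2$, rewrite $\|(i\nabla+A_0)\psi\|^2-\lambda_a\|\psi\|^2=\sum_j(\lambda_j^0-\lambda_a)|\tau_j(a)|^2$, and exploit the simplicity of $\lambda_0$ to produce the spectral gap $\lambda_{n_0+1}^0>\lambda_0>\lambda_{n_0-1}^0$. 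Only the finitely many indices $j\leq n_0$ contribute non-positively, and for these the projection is quantitatively small: $\tau_{n_0}(a)=\int_\Omega e^{i\alpha(\theta_0^a-\theta_a)}\varphi_a\,\overline{\varphi_0}\,dx-1$ plus error terms of order $\int_{D_{R|a|}}|\varphi_0\overline\psi|$, and the inequality $\|e^{i\alpha(\theta_0^a-\theta_a)}\varphi_a-\varphi_0\|_{L^2}^2=2\bigl(1-\mathop{\mathfrak{Re}}\int_\Omega e^{i\alpha(\theta_0^a-\theta_a)}\varphi_a\,\overline{\varphi_0}\,dx\bigr)$ combined with the $L^2$-convergence $\varphi_a\to\varphi_0$ from \eqref{eq:convergence-varphi-a-1} yields a rate for $\tau_{n_0}(a)\to 0$. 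For $j<n_0$, the orthogonality conditions \eqref{eq:23} for $\varphi_a$ against lower eigenfunctions, transported through the gauge factor $e^{i\alpha(\theta_0^a-\theta_a)}$, give $\tau_j(a)=o(1)$ as well. All these contributions feed into $g(p,a,R)=o(1)$.

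Combining everything I get $\|(i\nabla+A_0)\psi\|_{L^2(\Omega)}^2\leq C(h+g)\sqrt{H(\varphi_a,K_\delta|a|)}\,\|(i\nabla+A_0)\psi\|_{L^2(\Omega)}$, from which the claimed inequality follows by division and by the Hardy--Poincar\'e equivalence of norms on $H^{1,0}_0(\Omega,\C)$. Finally, $h(p,a,R)=O(1)$ follows from the trace inequality $\|\varphi|_{\partial D_R}\|_{L^2(\partial D_R)}\leq C\|\varphi\|_{\mathcal{D}^{1,2}_0(\R^2,\C)}$ combined with uniform $L^2(\partial D_R)$-bounds on the two flux terms: these are obtained by applying classical elliptic regularity (in suitable gauges that remove the Aharonov--Bohm poles on neighbourhoods of $\partial D_R$) to $\tilde\varphi_a$ and $Z_a^R$, which are uniformly bounded in $H^{1,p}(D_R,\C)$ and $H^{1,0}(D_R,\C)$ by Remarks \ref{rem:3.7} and \ref{rem:4.5} and satisfy $(i\nabla+A_p)^2\tilde\varphi_a=|a|^2\lambda_a\tilde\varphi_a$ and $(i\nabla+A_0)^2 Z_a^R=0$ respectively.
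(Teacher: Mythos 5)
Your route is genuinely different from the paper's: you work directly with $\psi=v_{n_0,R,a}-\varphi_0$ through an energy identity and a spectral decomposition, whereas the paper encodes the whole estimate in the invertibility of the differential of the map $F(\lambda,\varphi)=\big(\|\varphi\|^2_{H^{1,0}_0}-\lambda_0,\ \Im\int_\Omega\varphi\overline{\varphi_0},\ (i\nabla+A_0)^2\varphi-\lambda\varphi\big)$ at $(\lambda_0,\varphi_0)$, reducing everything to an estimate of the residual $w_a=(i\nabla+A_0)^2v_{n_0,R,a}-\lambda_a v_{n_0,R,a}$ in the dual norm. The parts you share with the paper are sound: the energy identity, the rescaling of the flux jump across $\partial D_{R|a|}$ into $\sqrt{H(\varphi_a,K_\delta|a|)}\,h(p,a,R)$, the smallness of the bulk terms via \eqref{eq:estimate-vint-n0-2}, Corollary \ref{cor:l0-la_preliminary} and Poincar\'e, the $O(1)$ bound on $h$ from the uniform bounds of Remarks \ref{rem:3.7} and \ref{rem:4.5}, and the treatment of the mode $j=n_0$, where the normalization \eqref{eq:normalization} makes $\tau_{n_0}$ quadratic in $\|\psi\|_{L^2}$ and hence absorbable.

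The genuine gap is in the modes $j<n_0$. Your coercivity argument requires $\sum_{j< n_0}|\tau_j(a)|^2\lesssim (h+g)^2\,H(\varphi_a,K_\delta|a|)$, i.e.\ $|\tau_j(a)|=O\big((h+g)\sqrt{H}\big)$; since $\sqrt{H}\to 0$ (it is $O(|a|^{\sqrt{\mu_1}-\delta})$ by \eqref{eq:24}), the bound $\tau_j(a)=o(1)$ you claim is quantitatively useless — the negative contribution $(\lambda_a-\lambda_j^0)|\tau_j|^2=o(1)$ would then dominate the target $O(H)$ and destroy the estimate. Moreover, the mechanism you propose for it (the orthogonality \eqref{eq:23} of $\varphi_a$ against $\varphi_j^a$, transported through the gauge factor) would reduce the problem to estimating $\|e^{-i\alpha(\theta_0^a-\theta_a)}\phi_j^0-\varphi_j^a\|_{L^2}$, i.e.\ a convergence \emph{rate} of order $o(|a|^{|\alpha-k|+\delta})$ for the \emph{lower} eigenfunctions; no such rate is available at this stage of the paper, and it can in fact fail when a lower eigenfunction vanishes at $0$ to an order smaller than $|\alpha-k|$. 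The correct repair inside your framework is to pair the residual with $\phi_j^0$: since $\int_\Omega\varphi_0\overline{\phi_j^0}=0$, one gets $(\lambda_j^0-\lambda_a)\tau_j(a)=\langle w_a,\phi_j^0\rangle$, and the same flux-jump computation that produced $h$ bounds the right-hand side by $C(h+g)\sqrt{H}$, while the spectral gap $|\lambda_j^0-\lambda_a|\geq c>0$ (from simplicity and $\lambda_a\to\lambda_0$) gives the needed bound on $\tau_j$. Once this is added, your argument is complete — and is essentially a hands-on unpacking of the paper's inverse-differential argument, which handles all the low modes simultaneously through the invertibility of $dF(\lambda_0,\varphi_0)$.
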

\begin{proof}
The proof exploits the invertibility 
of the differential of the function $F$ defined below, in the spirit
of \cite[Theorem 8.2]{AbatangeloFelliNorisNys2016} and \cite[Theorem
7.2]{AbatangeloFelli2015-1}.
Let 
\begin{align*}
& F \, : \, \C \times H^{1,0}_{0}(\Omega,\C) 
\longrightarrow \R \times \R \times (H^{1,0}_{0,\R}(\Omega,\C))^\star \\
& (\lambda,\varphi) \longmapsto 
\Big( { \textstyle{ \|\varphi\|_{ H^{1,0}_0(\Omega,\C) }^2 -\lambda_0, \,
\mathfrak{Im} \big(\int_{\Omega} \varphi \overline{\varphi_0} \, dx \big), \, 
(i\nabla +A_0)^2 \varphi - \lambda \varphi } } \Big).
\end{align*}
In the above definition, $(H^{1,0}_{0,\R}(\Omega,\C))^\star$ is the real dual space of 
$H^{1,0}_{0,\R}(\Omega,\C)=H^{1,0}_{0}(\Omega,\C)$, which is here meant as a vector 
space over $\R$ endowed with the norm
\[
\| u \|_{ H^{1,0}_0(\Omega,\C) } = \bigg( \int_{\Omega} \big| (i\nabla +A_0)u \big|^2 \, dx \bigg)^{1/2},
\]
and $(i\nabla +A_0)^2  \varphi-\lambda \varphi\in (H^{1,0}_{0,\R}(\Omega,\C))^\star$ acts as 
\[
\phantom{a}_{ ( H^{1,0}_{0,\R}(\Omega,\C) )^\star } \, 
\Big\langle (i\nabla + A_0)^2 \varphi - \lambda \varphi , u \Big\rangle_{ \, H^{1,0}_{0,\R}(\Omega,\C) } 
= \mathfrak{Re} \left( { \textstyle{ \int_{\Omega} (i\nabla+A_0) \varphi \cdot \overline{ (i\nabla+A_0)u } \, dx 
- \lambda \int_{\Omega} \varphi \overline{u} \, dx } } \right)
\]
for all $\varphi\in H^{1,0}_{0}(\Omega,\C)$.
It is easy to prove that the function $F$ is Fréchet-differentiable at $(\lambda_0,\varphi_0)$, 
with differential
$dF(\lambda_0,\varphi_0)\in \mathcal{L}(\C\times H^{1,0}_{0}(\Omega,\C),\R\times\R\times(H^{1,0}_{0,\R}(\Omega,\C))^*)$ given by
\begin{multline*}
dF(\lambda_0,\varphi_0)(\lambda,\varphi)=\left(
2\mathfrak{Re}\left(\int_\Omega(i\nabla+A_0)\varphi_0\cdot\overline{(i\nabla+A_0)\varphi}\,dx\right), \right. \\
\left. \mathfrak{Im}\left(\int_\Omega\varphi\overline{\varphi_0}\,dx\right),
(i\nabla+A_0)^2\varphi-\lambda_0\varphi-\lambda\varphi_0
\right),
\end{multline*}
for every $(\lambda,\varphi)\in \C\times H^{1,0}_{0}(\Omega,\C)$.
From the simplicity assumption \eqref{eq:1} it follows that $dF(\lambda_0,\varphi_0)$ is invertible, see \cite[Lemma 7.1]{AbatangeloFelli2015-1} for details. 

From the definition of $v_{n_0,R,a}$, \eqref{eq:convergence-varphi-a-2}, \eqref{eq:24}, \eqref{eq:estimate-varphi-a1},
\eqref{eq:estWa}, and \eqref{eq:estimate-vint-n0-1}
it follows that 
\begin{align*}
   \int_{\Omega}\big|(i\nabla+A_0)(v_{n_0,R,a}-\varphi_0)\big|^2\,dx=&
  \int_{\Omega}| e^{i\alpha(\theta_0^{{a}} - \theta_a)}
(i\nabla+A_a)\varphi_a-
  (i\nabla+A_0)\varphi_0|^2\,dx \\
  &\notag -
\int_{D_{R|a|}}| e^{i\alpha(\theta_0^{{a}} - \theta_a)}
(i\nabla+A_a)\varphi_a-
  (i\nabla+A_0)\varphi_0|^2\,dx \\
  &\notag +
\int_{D_{R|a|}}
\big|(i\nabla+A_0)(v_{n_0,R,a}^{int}-\varphi_0)\big|^2\,dx=o(1)
\end{align*} 
as $|a|\to0$, so that $v_{n_0,R,a}\to \varphi_0$ in
$H^{1}_0(\Omega,\C)$ as $|a|\to 0^+$.
Then, from the invertibility of 
$dF(\lambda_0,\varphi_0)$ we have that 
\begin{align}\label{eq:25}
&| \lambda_{a} - \lambda_0 | + \| v_{n_0,R,a} - \varphi_0 \|_{H^{1,0}_0(\Omega,\C)} \\
\notag\quad \leq \| ( &dF(\lambda_0,\varphi_0))^{-1} \|_{ \mathcal L(\R \times \R \times ( H^{1,0}_{0,\R}(\Omega,\C) )^\star 
, \C \times H^{1,0}_{0}(\Omega,\C) ) } 
\| F( \lambda_a , v_{n_0,R,a} ) \|_{\R \times \R \times ( H^{1,0}_{0,\R}(\Omega,\C) )^\star } (1+o(1))
\end{align}
as $|a|\to 0^+$. We denote
\[
F( \lambda_a , v_{n_0,R,a} ) = \left( \alpha_a, \beta_a, w_a \right)
\]
where 
\begin{align*}
\left\{ \begin{aligned}
\alpha_a & = \| v_{n_0,R,a} \|_{ H^{1,0}_0(\Omega,\C) }^2 - \lambda_0 \in \R , \\
\beta_a & = \mathfrak{Im} \left( { \textstyle{ \int_{\Omega} v_{n_0,R,a} \overline{\varphi_0} \, dx } } \right) \in \R , \\
w_a & = (i\nabla+A_0)^2 v_{n_0,R,a} - \lambda_a v_{n_0,R,a} \in (H^{1,0}_{0,\R}(\Omega,\C))^\star.
\end{aligned} \right.
\end{align*}
In view of \eqref{eq:estimate-vint-n0-1}, \eqref{eq:estimate-varphi-a1}, and
Corollary \ref{cor:l0-la_preliminary}
we have that 
\begin{align}\label{eq:26}
\alpha_a 
& = \left( \int_{ D_{R|a|} } | (i\nabla+A_0) v_{n_0,R,a}^{int} |^2 \, dx - 
\int_{ D_{R|a|} } | (i\nabla+A_a) \varphi_a |^2 \, dx \right) + ( \lambda_a - \lambda_0 ) \\
\notag& = O(H(\varphi_a, K_\delta |a|))+O\Big((H(\varphi_a, K_\delta
  |a|))^{\frac{|\alpha-k|}{|\alpha-k|+\delta}}\Big)
=o(\sqrt{H(\varphi_a, K_\delta |a|)})
\end{align}
as $|a|\to0^+$.
The normalization condition for the phase in \eqref{eq:normalization}
together with \eqref{eq:estimate-vint-n0-2}, \eqref{eq:estWa}, and \eqref{eq:estimate-varphi-a3} yield
\begin{align}\label{eq:27}
\beta_a 
& = \Im \left( \int_{ D_{R|a|} } v_{n_0,R,a}^{int} \overline{\varphi_0} \, dx 
- \int_{D_{R|a|}} e^{ i \alpha (\theta_0^a-\theta_a) } \varphi_a \overline{\varphi_0} \, dx
+ \int_{\Omega} e^{ i \alpha (\theta_0^a-\theta_a) } \varphi_a
  \overline{\varphi_0} \, dx \right) \\
\notag& = \Im \left( \int_{ D_{R|a|} } v_{n_0,R,a}^{int} \overline{\varphi_0} \, dx 
- \int_{D_{R|a|}} e^{ i \alpha (\theta_0^a-\theta_a) } \varphi_a \overline{\varphi_0} \, dx
\right) \\
\notag& = O(|a|^{2+|\alpha-k|}\sqrt{H(\varphi_a, K_\delta |a|)}) =o(\sqrt{H(\varphi_a, K_\delta |a|)})
\end{align}
as $|a|\to0^+$.

For every $a\in\Omega$, the map
\[
\mathcal T_a:\mathcal D^{1,2}_0(\R^2,\C)\to \mathcal
D^{1,2}_0(\R^2,\C),\quad  \mathcal T_a\varphi(x)=\varphi(|a|x).
\]
is an isometry of $\mathcal
D^{1,2}_0(\R^2,\C)$. 

Since $H^{1,0}_{0}(\Omega,\C)$ is continuously
embedded into $\mathcal D^{1,2}_0(\R^2,\C)$ by trivial extension
outside $\Omega$ and $\|u\|_{\mathcal D^{1,2}_0(\R^2,\C)}=
\|u\|_{H^{1,0}_{0}(\Omega,\C)}$ for every $u\in H^{1,0}_{0}(\Omega,\C)$, we have that 
\begin{align}\label{eq:111}
 &\|w_a\|_{(H^{1,0}_{0,\R}(\Omega,\C))^\star} \\
 \notag & = \sup_{\substack{\varphi\in H^{1,0}_{0}(\Omega,\C) \\
      \|\varphi\|_{H^{1,0}_0(\Omega,\C)}=1}} \bigg|\Re
  \left(\int_{\Omega}(i\nabla+A_0)v_{n_0,R,a}\cdot\overline{(i\nabla+A_0)\varphi}\,dx
    -\lambda_a \int_{\Omega} v_{n_0,R,a}\overline{\varphi}
    \,dx\right)\bigg| \\
 \notag & \leq \sup_{\substack{\varphi\in \mathcal D^{1,2}_0(\R^2,\C)\\
      \|\varphi\|_{\mathcal D^{1,2}_0(\R^2,\C)}=1}} \bigg|\Re
  \left(\int_{\Omega}(i\nabla+A_0)v_{n_0,R,a}\cdot\overline{(i\nabla+A_0)\varphi}\,dx
    -\lambda_a \int_{\Omega} v_{n_0,R,a}\overline{\varphi}
    \,dx\right)\bigg|.
\end{align}
For every $\varphi \in  \mathcal D^{1,2}_0(\R^2,\C) $ we have that 
\begin{align}\label{eq:112}
  &\int_{\Omega}(i\nabla+A_0)v_{n_0,R,a}\cdot\overline{(i\nabla+A_0)\varphi}\,dx
    -\lambda_a \int_{\Omega} v_{n_0,R,a}\overline{\varphi}
    \,dx\\
\notag&=\int_{\Omega\setminus D_{R|a|}}
 e^{i\alpha (\theta_0^a-\theta_a)}  (i\nabla+A_a)\varphi_a\cdot\overline{(i\nabla+A_0)\varphi}\,dx
    -\lambda_a \int_{\Omega\setminus D_{R|a|}} e^{i\alpha (\theta_0^a-\theta_a)} \varphi_a\overline{\varphi}
    \,dx\\
&\notag\qquad+
\int_{D_{R|a|}}(i\nabla+A_0)v_{n_0,R,a}\cdot\overline{(i\nabla+A_0)\varphi}\,dx
    -\lambda_a \int_{D_{R|a|}} v_{n_0,R,a}\overline{\varphi}
    \,dx.
\end{align}
From scaling and integration by parts we have that, letting
$\tilde\varphi_a$ be defined in \eqref{eq:tilde-varphi-a},
\begin{multline}\label{eq:113}
 \int_{\Omega\setminus D_{R|a|}}
 e^{i\alpha (\theta_0^a-\theta_a)}  (i\nabla+A_a)\varphi_a\cdot\overline{(i\nabla+A_0)\varphi}\,dx
    -\lambda_a \int_{\Omega\setminus D_{R|a|}} e^{i\alpha(\theta_0^a-\theta_a)} \varphi_a\overline{\varphi}
    \,dx\\
=
i\sqrt{H(\varphi_a, K_\delta |a|)}\int_{\partial D_{R}}\overline{\mathcal T_a\varphi}\,
e^{i\alpha (\theta_0^p-\theta_p)} (i\nabla+A_p)\tilde\varphi_a\cdot
  \nu\,d\sigma
\end{multline}
being $\nu=\frac x{|x|}$ the outer unit normal vector. 
In a similar way we have that, defining $Z_a^R$ as in \eqref{eq:ZaR} and using \eqref{eq:vextinteq}, 
\begin{multline}\label{eq:114}
  \int_{D_{R|a|}}(i\nabla+A_0)v_{n_0,R,a}\cdot\overline{(i\nabla+A_0)\varphi}\,dx
    -\lambda_a \int_{D_{R|a|}} v_{n_0,R,a}\overline{\varphi}
    \,dx\\
=\sqrt{H(\varphi_a, K_\delta |a|)}\left(-i\int_{\partial D_{R}}
   (i\nabla+A_0)Z_a^R\cdot\nu \overline{\mathcal T_a \varphi}\,d\sigma
    -\lambda_a |a|^2\int_{D_{R}}Z_a^R\overline{\mathcal T_a\varphi}
    \,dx\right).
\end{multline}
Combining \eqref{eq:111}, \eqref{eq:112}, \eqref{eq:113}, \eqref{eq:114}, and
recalling that $\mathcal T_a$ is an isometry of $\mathcal D^{1,2}_0(\R^2,\C)$, we obtain
that
\begin{equation}\label{eq:28}
(H(\varphi_a, K_\delta
|a|))^{-1/2}\|w_a\|_{(H^{1,0}_{0,\R}(\Omega,\C))^\star}\leq h(p,a,R)+\lambda_a |a|^2\sup_{\substack{\varphi\in \mathcal D^{1,2}_0(\R^2,\C)\\
      \|\varphi\|_{\mathcal D^{1,2}_0(\R^2,\C)}=1}}\bigg|
\int_{D_{R}}Z_a^R\overline{\varphi}
    \,dx\bigg|
  \end{equation}
where 
\[
h(p,a,R)=\sup_{\substack{\varphi\in \mathcal D^{1,2}_0(\R^2,\C)\\
      \|\varphi\|_{\mathcal D^{1,2}_0(\R^2,\C)}=1}} \bigg|\int_{\partial D_{R}}
\left(e^{i\alpha (\theta_0^p-\theta_p)} (i\nabla+A_p)\tilde\varphi_a-(i\nabla+A_0) Z_a^R\right)\cdot
  \nu\,\overline{\varphi}\,d\sigma\bigg|.
\]
From Remarks \ref{rem:3.7} and \ref{rem:4.5} it follows that, for $R>\max\{2,K_\delta\}$
and $p\in{\mathbb S}^1$ fixed, 
\begin{equation*}
\left\{\left(e^{i\alpha (\theta_0^p-\theta_p)} (i\nabla+A_p)\tilde\varphi_a-(i\nabla+A_0) Z_a^R\right)\cdot
  \nu\right\}_{|a|<r_\delta/R}\quad\text{is bounded in }H^{-1/2}(\partial D_R)
\end{equation*}
so that, for $p$ and $R$ fixed, $h(p,a,R)=O(1)$ as $a\to0$.
Moreover Remark \ref{rem:4.5} implies that,  for $R>\max\{2,K_\delta\}$
and $p\in{\mathbb S}^1$ fixed, 
\[
\sup_{\substack{\varphi\in \mathcal D^{1,2}_0(\R^2,\C)\\
      \|\varphi\|_{\mathcal D^{1,2}_0(\R^2,\C)}=1}}\bigg|
\int_{D_{R}}Z_a^R\overline{\varphi}
    \,dx\bigg|=O(1)\quad\text{as }|a|\to0^.
\]
 Hence
the conclusion follows from \eqref{eq:25}, \eqref{eq:26},
\eqref{eq:27}, and \eqref{eq:28}.
\end{proof}

The previous theorem allows estimating the energy variation of scaled
eigenfunctions and improving the results of Corollary \ref{cor:Ha} as follows.

\begin{Corollary}\label{cor:improved}
Let $p\in {\mathbb S}^1$ be fixed. Then 
\begin{itemize}
\item[$(i)$] $|a|^{2|\alpha-k|}=O(H(\varphi_a, K_\delta |a|))$ as $a=|a|p\to0$;
\item[$(ii)$] letting $\tilde{\varphi}_a$ and $W_a$ be as in
  \eqref{eq:tilde-varphi-a} and \eqref{eq:def-Wa}, for every $R>\max\{2,K_\delta\}$ there holds
\begin{equation}\label{eq:tilde_phi-W}
\int_{\frac{\Omega}{|a|} \setminus D_R} 
\left| (i\nabla + A_{p}) 
\left( \tilde{\varphi}_a - e^{i\alpha(\theta_{p} - \theta_0^{p})} W_a \tfrac{|a|^{|\alpha-k|}}{\sqrt{H(\varphi_a,K_\delta |a|})} 
\right) \right|^2 \, dx  = O(1),
\quad \text{ as } a=|a|p \to 0.
\end{equation}
\end{itemize}
\end{Corollary}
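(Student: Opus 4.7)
The plan is to derive (ii) directly from Theorem~\ref{thm:energy_estimates} via a scaling argument, and then to obtain (i) from (ii) by a blow-up contradiction exploiting the nondegeneracy of $\psi_k$ on annuli around the origin. The key identity throughout is the gauge relation
\[
(i\nabla+A_0)(e^{i\alpha(\theta_0^a-\theta_a)}u) = e^{i\alpha(\theta_0^a-\theta_a)}(i\nabla+A_a)u,
\]
valid on $\Omega\setminus\Gamma_a$, together with its analogue with $(p,0)$ in place of $(0,a)$ on $\R^2\setminus\Gamma_p$; in each case the cut lies inside the smaller disk of the argument, so the identity applies freely on the relevant exterior domain.

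To establish (ii), I would start from the decomposition
\[
\|v_{n_0,R,a}-\varphi_0\|_{H^{1,0}_0(\Omega,\C)}^2
= \int_{D_{R|a|}}\bigl|(i\nabla+A_0)(v_{n_0,R,a}^{int}-\varphi_0)\bigr|^2\,dx + \int_{\Omega\setminus D_{R|a|}}\bigl|(i\nabla+A_0)(e^{i\alpha(\theta_0^a-\theta_a)}\varphi_a-\varphi_0)\bigr|^2\,dx,
\]
discard the nonnegative inner piece, and observe that since $\Gamma_a\subset\overline{D_{|a|}}\subset D_{R|a|}$ for $R>1$, the gauge identity plus factoring out the unit-modulus phase $e^{i\alpha(\theta_0^a-\theta_a)}$ rewrites the outer piece as
\[
\int_{\Omega\setminus D_{R|a|}} \bigl|(i\nabla+A_a)\varphi_a - e^{i\alpha(\theta_a-\theta_0^a)}(i\nabla+A_0)\varphi_0\bigr|^2\,dx \leq C\,H(\varphi_a, K_\delta |a|),
\]
with the bound coming from Theorem~\ref{thm:energy_estimates}. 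The change of variables $x=|a|y$, combined with the scaling relations $\varphi_a(|a|y)=\sqrt{H}\,\tilde\varphi_a(y)$, $\varphi_0(|a|y)=|a|^{|\alpha-k|}W_a(y)$, $A_a(|a|y)=|a|^{-1}A_p(y)$, $A_0(|a|y)=|a|^{-1}A_0(y)$, and $(\theta_a-\theta_0^a)(|a|y)=(\theta_p-\theta_0^p)(y)$ (writing $H:=H(\varphi_a,K_\delta|a|)$), then produces an overall factor $H$ in front of the integral and the integrand $\bigl|(i\nabla+A_p)\tilde\varphi_a - \mu_a e^{i\alpha(\theta_p-\theta_0^p)}(i\nabla+A_0)W_a\bigr|^2$, where $\mu_a:=|a|^{|\alpha-k|}/\sqrt{H}$. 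A final application of the gauge identity on $\Omega/|a|\setminus D_R$ (valid since $\Gamma_p\subset\overline{D_1}\subset D_R$) combines the two magnetic gradients into $(i\nabla+A_p)$ acting on the difference $\tilde\varphi_a-\mu_a e^{i\alpha(\theta_p-\theta_0^p)}W_a$; dividing by $H$ yields exactly \eqref{eq:tilde_phi-W}.

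For (i), I would argue by contradiction: suppose $\mu_n:=\mu_{a_n}\to\infty$ along some sequence $a_n=|a_n|p\to 0$. Restricting \eqref{eq:tilde_phi-W} to the annulus $D_{2R}\setminus D_R$ (on which $\Gamma_p$ is absent, so the gauge identity applies to transfer the magnetic gradient from $A_p$ to $A_0$ on the $W_{a_n}$ term) and using the triangle inequality in $L^2(D_{2R}\setminus D_R)$ yields
\[
\mu_n\bigl\|(i\nabla+A_0)W_{a_n}\bigr\|_{L^2(D_{2R}\setminus D_R)} \leq \bigl\|(i\nabla+A_p)\tilde\varphi_{a_n}\bigr\|_{L^2(D_{2R}\setminus D_R)} + O(1).
\]
The right-hand side is uniformly bounded by Remark~\ref{rem:3.7}, whereas \eqref{eq:conv-Wa} implies $\|(i\nabla+A_0)W_{a_n}\|_{L^2(D_{2R}\setminus D_R)} \to |\beta|\,\|(i\nabla+A_0)\psi_k\|_{L^2(D_{2R}\setminus D_R)}$, a strictly positive quantity since the function $\psi_k$ defined in \eqref{eq:psi_j_definition} is a nontrivial homogeneous solution whose magnetic gradient is a nonzero homogeneous function of degree $|\alpha-k|-1$ on $\R^2\setminus\{0\}$. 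This contradicts $\mu_n\to\infty$, so $\mu_a$ is uniformly bounded and (i) follows. The main technical point is the careful bookkeeping of the gauge phases and scaling factors across the change of variables; once these are in place the argument reduces to a clean compactness-and-nondegeneracy statement on annuli, relying only on Theorem~\ref{thm:energy_estimates}, Remark~\ref{rem:3.7}, and the $H^{1,0}$-convergence $W_a\to\beta\psi_k$.
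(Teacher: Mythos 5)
Your proof is correct and follows essentially the same route as the paper: part (ii) is obtained by scaling the bound of Theorem \ref{thm:energy_estimates} restricted to $\Omega\setminus D_{R|a|}$ via the gauge identity, and part (i) by comparing on the annulus $D_{2R}\setminus D_R$ against the nondegenerate limit $\beta\psi_k$ using Remark \ref{rem:3.7} and \eqref{eq:conv-Wa}. The only cosmetic difference is that you phrase (i) as a contradiction with $\mu_n\to\infty$, whereas the paper states the same triangle-inequality bound directly to conclude that $|a|^{|\alpha-k|}/\sqrt{H(\varphi_a,K_\delta|a|)}$ stays bounded.
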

\begin{proof}
 Estimate \eqref{eq:tilde_phi-W} follows from  scaling  and Theorem
 \ref{thm:energy_estimates}.
From \eqref{eq:tilde_phi-W} it follows that
\begin{align*}
\tfrac{|a|^{|\alpha-k|}}{\sqrt{H(\varphi_a,K_\delta |a|})} &\left(\int_{D_{2R}\setminus D_{R}}\big|(i\nabla+A_{0})
 W_a\big|^2dx\right)^{1/2} \\
&=\tfrac{|a|^{|\alpha-k|}}{\sqrt{H(\varphi_a,K_\delta |a|})} \left(\int_{D_{2R}\setminus D_{R}}\Big|(i\nabla+A_{p})
 \Big(e^{i\alpha(\theta_{p}-\theta_0^p)}W_a\Big)\Big|^2dx\right)^{1/2}
  \\
&
 \leq O(1) + \left(\int_{D_{2R}\setminus D_{R}}\bigg|(i\nabla+A_{p})\tilde \varphi_a(x)\bigg|^2dx\right)^{1/2} 
\end{align*}
as $a=|a|p\to0$. From Remark \ref{rem:3.7} and \eqref{eq:conv-Wa}, the
above estimate implies (i).
\end{proof}

In the following lemma we prove the existence and uniqueness of the
function $\Psi_p$ satisfying \eqref{eq:Psip1} and \eqref{eq:Psip2}, which will
turn out to be the limit of the blowed-up family
\eqref{eq:tilde-varphi-a} as $a\to0$ along the fixed direction $p\in{\mathbb S}^1$.

\begin{Lemma}\label{lem:uniqueness}
Let $p\in{\mathbb S}^1$. There exists a unique 
$\Psi_{p} \in H^{1,p}_{\rm loc}(\R^2,\C)$
satisfying \eqref{eq:Psip1} and \eqref{eq:Psip2}.
\end{Lemma}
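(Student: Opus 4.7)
The plan is to prove uniqueness by an energy identity at infinity and existence by cutting off the formal asymptotic profile $v_0:=e^{i\alpha(\theta_p - \theta_0^p)}\psi_k$ and correcting it in $\mathcal D^{1,2}_{p}(\R^2,\C)$ via Lax--Milgram. For uniqueness, given two candidates $\Psi_p^{(1)},\Psi_p^{(2)}$, the difference $\Phi$ lies in $H^{1,p}_{\rm loc}(\R^2,\C)$, is weakly $(i\nabla+A_p)^2$-harmonic, and---combining \eqref{eq:Psip2} for both functions with $\Phi\in H^{1,p}(D_1,\C)$---satisfies $(i\nabla+A_p)\Phi\in L^2(\R^2,\C)$. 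I would test the weak equation against $\chi_R^2\Phi$, with $\chi_R\equiv 1$ on $D_R$, $\chi_R\equiv 0$ outside $D_{2R}$, $|\nabla\chi_R|\leq 2/R$; Cauchy--Schwarz combined with the annular magnetic Hardy inequality \eqref{eq:anello} applied to $D_{3R}(p)\setminus D_{R/2}(p)\supset D_{2R}\setminus D_R$ (for $R\geq 2$) bounds the cross term by a constant multiple of $\|(i\nabla+A_p)\Phi\|_{L^2(D_{3R}(p)\setminus D_{R/2}(p))}\cdot \|\chi_R(i\nabla+A_p)\Phi\|_{L^2(\R^2)}$, forcing $(i\nabla+A_p)\Phi\equiv 0$ in the limit $R\to\infty$. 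Since every local solution of $(i\nabla+A_p)\Phi=0$ has the form $Ce^{i\alpha\theta_p}$ on simply connected subsets of $\R^2\setminus\{p\}$, and such a function is not single-valued when $\alpha\notin\Z$ because $e^{2\pi i\alpha}\neq 1$, one concludes $\Phi\equiv 0$.

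For existence, fix $R_0>1$ and a smooth radial cutoff $\eta:\R^2\to[0,1]$ with $\eta\equiv 0$ on $D_{R_0}$ and $\eta\equiv 1$ outside $D_{R_0+1}$, and set $v:=\eta\,v_0$. Since $\overline{\Gamma_p}\cup\{0,p\}\subset\overline{D_1}\subset\{\eta=0\}$, the function $v$ is smooth on all of $\R^2$, so in particular $v\in H^{1,p}_{\rm loc}(\R^2,\C)$. On $\{\eta\neq 0\}$ the identity $A_p-\alpha\nabla(\theta_p-\theta_0^p)=A_0$ (valid off the branch segment) together with $(i\nabla+A_0)^2\psi_k=0$ on $\R^2\setminus\{0\}$ give
\[
(i\nabla+A_p)^2 v_0 = e^{i\alpha(\theta_p-\theta_0^p)}(i\nabla+A_0)^2\psi_k = 0
\]
pointwise, so a Leibniz expansion yields $(i\nabla+A_p)^2 v = -f$ for some $f\in L^2(\R^2,\C)$ compactly supported in $D_{R_0+1}\setminus D_{R_0}$, hence bounded away from $p$.

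Next I would apply the Lax--Milgram theorem on $\mathcal D^{1,2}_{p}(\R^2,\C)$, defined as the completion of $C^\infty_{\rm c}(\R^2\setminus\{p\},\C)$ under the norm $\|(i\nabla+A_p)\,\cdot\,\|_{L^2(\R^2,\C)}$: the associated bilinear form is the defining inner product, hence coercive, and the functional $\varphi\mapsto\int_{\R^2}f\bar\varphi\,dx$ is continuous because $K:=\mathrm{supp}\,f$ lies at positive distance from $p$, so \eqref{eq:hardy} yields $\|\varphi\|_{L^2(K)}\leq C_K\|(i\nabla+A_p)\varphi\|_{L^2(\R^2)}$. This produces $w\in\mathcal D^{1,2}_{p}(\R^2,\C)$ with $(i\nabla+A_p)^2 w=f$ weakly in $\R^2$. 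Setting $\Psi_p:=v+w$, property \eqref{eq:Psip1} holds by construction; for \eqref{eq:Psip2} I decompose $\Psi_p-v_0=(\eta-1)v_0+w$: outside $D_{R_0+1}$ only $w$ remains and $(i\nabla+A_p)w\in L^2(\R^2)$, while on the bounded annulus $D_{R_0+1}\setminus D_1$ the first summand's covariant gradient is bounded even near $p$, since $\eta\equiv 0$ there and $(i\nabla+A_p)v_0=e^{i\alpha(\theta_p-\theta_0^p)}(i\nabla+A_0)\psi_k$ is smooth at $p\neq 0$.

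The main technical subtlety is that $v_0$ itself is \emph{not} in $H^{1,p}_{\rm loc}$ on any neighborhood of $\overline{\Gamma_p}$: it jumps by the phase $e^{2\pi i\alpha}\neq 1$ across $\Gamma_p$, and the factor $e^{i\alpha\theta_p}$ is non-$H^1$ near $p$. The role of the cutoff $\eta$ is precisely to annihilate the profile on a neighborhood of the entire singular set $\overline{\Gamma_p}\supset\{0,p\}$, thereby reducing the problem to an inhomogeneous equation with smooth compactly supported source $f$, which is solvable in the good Hilbert space $\mathcal D^{1,2}_{p}(\R^2,\C)$.
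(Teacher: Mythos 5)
Your proposal is correct, and the existence part is essentially identical to the paper's proof: cut off $e^{i\alpha(\theta_p-\theta_0^p)}\psi_k$ away from $\overline{\Gamma_p}$, observe that the resulting source term is supported in a fixed annulus away from $p$ and defines a bounded functional on $\mathcal D^{1,2}_{p}(\R^2,\C)$, solve for the correction by Lax--Milgram, and check \eqref{eq:Psip2} by splitting off the corrector (you are in fact more explicit than the paper in verifying \eqref{eq:Psip2} and the admissibility of $\eta v_0$).

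The uniqueness step is where you genuinely deviate, though the outcome is the same. The paper uses the Hardy inequality \eqref{eq:hardy} to place the difference $\Phi$ directly in $\mathcal D^{1,2}_{p}(\R^2,\C)$ and then tests the homogeneous equation with $\Phi$ itself, concluding $\Phi=0$ from the vanishing of its $\mathcal D^{1,2}_{p}$-norm. You instead run a Caccioppoli-type argument with cutoffs $\chi_R$, controlling the cross term via the annular Hardy inequality \eqref{eq:anello} and the vanishing of the energy tail, and then kill the resulting first-order solution of $(i\nabla+A_p)\Phi=0$ by the monodromy $e^{2\pi i\alpha}\neq1$. Your route avoids having to justify that a function with finite magnetic energy and finite Hardy integral actually belongs to the completion $\mathcal D^{1,2}_{p}(\R^2,\C)$ and that the weak formulation extends to such non-compactly-supported test functions --- two density facts the paper uses implicitly --- at the cost of the extra (but easy) monodromy observation; one could also replace that last step by applying \eqref{eq:hardy} on a disk around $p$ once $(i\nabla+A_p)\Phi\equiv 0$ is known. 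Both arguments are sound.
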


\begin{proof}
Let $\eta$ be a smooth cut-off function such that $\eta \equiv 0$ in $D_1$ 
and $\eta \equiv 1$ in $\R^2 \setminus D_R$ for some $R > 1$. Recalling the definition of $\psi_k$ \eqref{eq:psi_j_definition}, we have
\[
\begin{split}
F & = (i\nabla + A_{p})^2 (\eta e^{ i \alpha ( \theta_{p}-\theta_0^{p})} \psi_k) \\
& = - \Delta \eta e^{ i \alpha ( \theta_{p} - \theta_0^{p} ) } \psi_k 
+ 2 i \nabla \eta \cdot (i\nabla+A_{p})( e^{ i \alpha ( \theta_{p} - \theta_0^{p} ) } \psi_k) 
+ \eta (i\nabla+A_{p})^2 ( e^{ i \alpha ( \theta_{p} - \theta_0^{p} ) } \psi_k) \\
& = - \Delta \eta e^{ i \alpha ( \theta_{p} - \theta_0^{p} ) } \psi_k 
+ 2 i \nabla \eta \cdot (i\nabla+A_{p})( e^{ i \alpha ( \theta_{p} - \theta_0^{p} ) } \psi_k) 
\in (\mathcal{D}^{1,2}_{p}(\R^2,\C))^*.
\end{split}
\]
Here $\mathcal{D}^{1,2}_{p}(\R^2,\C)$ is the completion of 
$C^\infty_c(\R^2\setminus\{0\},\C)$ with respect to 
\[
\| u \|_{\mathcal{D}^{1,2}_{p}(\R^2,\C)}
= \left( \int_{\R^2} |(i\nabla +A_{p}) u(x) |^2 \, dx \right)^{\!\!1/2}.
\]
By the Lax-Milgram's Theorem, there exists a unique 
$g \in \mathcal{D}^{1,2}_{p}(\R^2,\C)$ which solves
\[
(i\nabla+A_{p})^2 g = - F, \quad \text{in }
(\mathcal{D}^{1,2}_{p}(\R^2,\C))^*.
\]
Then, $\Psi_{p} = g + \eta e^{ i \alpha ( \theta_{p} - \theta_0^{p} ) } \psi_k$ 
satisfies \eqref{eq:Psip1} and \eqref{eq:Psip2}, so that the existence is proved.

The uniqueness follows from the fact that, if $\Psi_{p}^1$, 
$\Psi_{p}^2 \in H^{1,p}_{\rm loc}(\R^2,\C)$ satisfy 
\eqref{eq:Psip1} and \eqref{eq:Psip2}, then
\begin{equation}\label{eq:30}
(i\nabla+A_{p})^2 ( \Psi_{p}^1 - \Psi_{p}^2 ) = 0, 
\quad \text{in } (\mathcal{D}^{1,2}_{p}(\R^2,\C))^*,
\end{equation}
and
\[
\int_{\R^2} |(i\nabla +A_{p}) (\Psi_{p}^1 - \Psi_{p}^2) |^2 \, dx <+\infty,
\]
which, in view of the Hardy inequality \eqref{eq:hardy}, implies that 
\[
\int_{\R^2} \frac{|\Psi_{p}^1 - \Psi_{p}^2 |^2}{|x-p|^2} \, dx <+\infty,
\]
and hence that $\Psi_{p}^1 - \Psi_{p}^2\in
\mathcal{D}^{1,2}_{p}(\R^2,\C)$. Therefore we can test equation
\eqref{eq:30} with $\Psi_{p}^1 - \Psi_{p}^2$ thus concluding that
\[
\int_{\R^2} |(i\nabla +A_{p}) (\Psi_{p}^1 - \Psi_{p}^2) |^2 \, dx = 0,
\]
which implies that $\Psi_{p}^1 \equiv \Psi_{p}^2$.
\end{proof}

We are now in position to prove that the scaled eigenfunctions
\eqref{eq:tilde-varphi-a} converge to a multiple of $\Psi_p$ as $a=|a|p\to0$.

\begin{Lemma} \label{lemma:convergence-blow-up}
Let $p\in{\mathbb S}^1$ and  
$\delta \in (0, \sqrt{\mu_1}/2)$ be fixed and let $K_\delta > \Upsilon_\delta$ 
be as in Lemma~\ref{lemma:estimate-N}. For $a = |a| p \in \Omega$ 
let $\tilde\varphi_a$ be as in \eqref{eq:tilde-varphi-a}. Then
\begin{equation*}
\tilde\varphi_a \to \frac{\beta}{|\beta|} 
\left( \frac{K_\delta}{\int_{\partial D_{K_\delta}} |\Psi_{p}|^2 ds } \right)^{\!\!1/2} 
\Psi_{p}  \quad \text{ as } a=|a|p \to 0,
\end{equation*}
in $H^{1,p}(D_R,\C)$ for every $R > 1$ and in $C^2_{\rm loc}(\R^2\setminus\{p\},\C)$, 
where $\Psi_{p} $ is the function defined in Lemma \ref{lem:uniqueness}. 
Moreover, 
\begin{equation}\label{eq:HabigO}
\lim_{a=|a|p\to0} \frac{ |a|^{|\alpha-k|} }{ \sqrt{H( \varphi_a,K_\delta|a|) } } 
= \frac{1}{|\beta|} \left( \frac{K_\delta}{\int_{\partial D_{K_\delta}} |\Psi_{p}|^2 ds } \right)^{\!\!1/2} .
\end{equation}
\end{Lemma}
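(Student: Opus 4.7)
The plan is to extract a blow-up limit of $\tilde\varphi_a$ as $a=|a|p\to 0$, identify it with an explicit multiple of $\Psi_p$ via the uniqueness Lemma~\ref{lem:uniqueness}, and pin down the multiplicative constant using the normalization $\tfrac{1}{K_\delta}\int_{\partial D_{K_\delta}}|\tilde\varphi_a|^2\,ds=1$ that $\tilde\varphi_a$ inherits from its definition.

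Step one: by Remark~\ref{rem:3.7}, $\{\tilde\varphi_a:a=|a|p,\ R|a|<r_\delta\}$ is bounded in $H^{1,p}(D_R,\C)$ for every $R>K_\delta$. A diagonal extraction then produces a subsequence $a_j=|a_j|p\to 0$ along which $\tilde\varphi_{a_j}\rightharpoonup\tilde\Psi$ weakly in $H^{1,p}(D_R,\C)$ for every $R>K_\delta$, strongly in $L^2_{\mathrm{loc}}(\R^2,\C)$, and a.e. By scaling, $\tilde\varphi_a$ satisfies $(i\nabla+A_p)^2\tilde\varphi_a=\lambda_a|a|^2\tilde\varphi_a$; passing to the limit (with $\lambda_a\to\lambda_0$ and $|a|^2\to 0$) yields $(i\nabla+A_p)^2\tilde\Psi=0$ weakly in $\R^2$. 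Corollary~\ref{cor:improved}(i) gives boundedness of $\ell_a:=|a|^{|\alpha-k|}/\sqrt{H(\varphi_a,K_\delta|a|)}$, so after a further extraction $\ell_{a_j}\to\ell\in[0,+\infty)$.

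Step two: identification of $\tilde\Psi$ with $\ell\beta\,\Psi_p$. The convergence $W_a\to\beta\,\psi_k$ in \eqref{eq:conv-Wa} combined with the gauge identity $(i\nabla+A_p)(e^{i\alpha(\theta_p-\theta_0^p)}u)=e^{i\alpha(\theta_p-\theta_0^p)}(i\nabla+A_0)u$, valid a.e., yields
\begin{equation*}
(i\nabla+A_p)\bigl(\ell_a\,e^{i\alpha(\theta_p-\theta_0^p)}W_a\bigr)\to \ell\beta\,(i\nabla+A_p)\bigl(e^{i\alpha(\theta_p-\theta_0^p)}\psi_k\bigr) \quad \text{in } L^2_{\mathrm{loc}}(\R^2,\C).
\end{equation*}
Weak lower semicontinuity applied to Corollary~\ref{cor:improved}(ii) then gives
\begin{equation*}
\int_{\R^2\setminus D_R}\bigl|(i\nabla+A_p)\bigl(\tilde\Psi-\ell\beta\,e^{i\alpha(\theta_p-\theta_0^p)}\psi_k\bigr)\bigr|^2\,dx<+\infty,
\end{equation*}
so $\tilde\Psi-\ell\beta\,\Psi_p$ satisfies the homogeneous magnetic equation on $\R^2$ and has finite Dirichlet energy outside a disk containing $p$. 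The magnetic Hardy inequality \eqref{eq:hardy}, exactly as in the proof of Lemma~\ref{lem:uniqueness}, places this difference in $\mathcal D^{1,2}_p(\R^2,\C)$, and testing the equation against it forces $\tilde\Psi=\ell\beta\,\Psi_p$.

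Step three: determination of $\ell$ and strengthening of the convergence. A change of variable gives $\tfrac{1}{K_\delta}\int_{\partial D_{K_\delta}}|\tilde\varphi_a|^2\,ds=1$ for every admissible $a$, and compactness of the trace operator together with Step~1 yields $\tfrac{1}{K_\delta}\int_{\partial D_{K_\delta}}|\tilde\Psi|^2\,ds=1$. Inserting $\tilde\Psi=\ell\beta\,\Psi_p$ gives $|\ell\beta|^2=K_\delta\big/\!\int_{\partial D_{K_\delta}}|\Psi_p|^2\,ds$, which fixes $\ell\geq 0$ uniquely (in particular $\ell>0$) and proves \eqref{eq:HabigO} along the chosen subsequence; independence of the limit from the subsequence then upgrades this to convergence along the whole family $a=|a|p\to 0$. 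Strong $H^{1,p}(D_R,\C)$ convergence follows by testing the scaled equation with $\tilde\varphi_a-\tilde\Psi$, using the strong $L^2_{\mathrm{loc}}$ convergence, and verifying convergence of the boundary integral on $\partial D_R$; the $C^2_{\mathrm{loc}}(\R^2\setminus\{p\},\C)$ convergence is then a standard Schauder bootstrap on compacts disjoint from $p$. The main obstacle is Step~2: the gauge-twisted energy bound of Corollary~\ref{cor:improved}(ii) must be passed to the limit carefully because the phase $e^{i\alpha(\theta_p-\theta_0^p)}$ is singular across $\Gamma_p$ and must be handled via the exact gauge identity, and the subsequent identification with $\ell\beta\,\Psi_p$ rests delicately on the Hardy-based uniqueness argument underlying Lemma~\ref{lem:uniqueness}.
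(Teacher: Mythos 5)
Your proposal is correct and follows essentially the same route as the paper: compactness from Remark~\ref{rem:3.7} and Corollary~\ref{cor:improved}, passage to the limit in the scaled equation and in the energy bound \eqref{eq:tilde_phi-W}, the trace normalization $\tfrac{1}{K_\delta}\int_{\partial D_{K_\delta}}|\tilde\varphi_a|^2\,ds=1$ to rule out a trivial limit, the Hardy-based uniqueness argument of Lemma~\ref{lem:uniqueness} to identify the limit as $\ell\beta\Psi_p$, and subsequence independence. The only (harmless) differences are cosmetic: you identify the limit before showing $\ell>0$ and use weak lower semicontinuity where the paper invokes strong convergence, both of which work.
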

\begin{proof}
From Remark \ref{rem:3.7} and Corollary
  \ref{cor:improved} it follows that,
for every 
sequence $a_n = |a_n|p$ with $|a_n| \to 0$, there exist a subsequence 
$a_{n_\ell}$, $c \in [0,+\infty)$ and $\tilde\Phi \in H^{1,p}_{\rm loc}(\R^2,\C)$ 
such that
\begin{equation*}
\tilde\varphi_{a_{n_\ell}} \rightharpoonup \tilde\Phi
\text{ weakly in } H^{1,p}(D_R,\C) \, \text{ as } \ell \to + \infty
\quad \text{ and } \quad
\lim_{\ell \to + \infty} \frac{ |a_{n_\ell}|^{|\alpha-k|} }{ \sqrt{H(\varphi_{a_{n_\ell}},K_\delta|a_{n_\ell}|)} } = c
\end{equation*}
for every $R>1$. Passing to the limit in the
equation satisfied by $\tilde\varphi_{a}$, i.e. $(i\nabla + A_p)^2
\tilde\varphi_a = \lambda_a|a|^2 \tilde\varphi_a$ in $\frac1{|a|}\Omega$, we obtain that
$\tilde\Phi$ satisfies
\begin{equation}\label{eq:tildePhieq}
(i\nabla+A_{p})^2 \tilde\Phi = 0 \quad\text{in } \mathbb{R}^2.
\end{equation}
Moreover, by compact trace embeddings, 
\begin{equation}\label{eq:tildePhinorm}
\frac{1}{K_\delta} \int_{\partial D_{K_\delta}} |\tilde{\Phi}|^2 \, ds = 1,
\end{equation}
so that $\tilde{\Phi}$ is not identically zero.
Testing the equation for $\tilde\varphi_{a}$ with
  $\tilde\varphi_{a}$ itself, integrating by parts and exploiting the
  $C^2_{\rm loc}$-convergence of $\tilde\varphi_{a}$ in $\R^2\setminus\{p\}$
  (which follows from classic  elliptic estimates) we obtain that
  $\int_{D_R}|(i\nabla+A_p) \tilde\varphi_{a_{n_\ell}}|^2\,dx\to
  \int_{D_R}|(i\nabla+A_p) \tilde\Phi|^2\,dx$ as $\ell\to\infty$ for
  every $R>1$. Hence we conclude that, for all $R>1$,
  $\tilde\varphi_{a_{n_\ell}}\to \tilde\Phi$ strongly in 
 $H^{1,p}(D_R,\C)$ as $\ell \to + \infty$.

By the strong $H^{1,p}_{\rm loc}(\R^2,\C)$-convergence and recalling \eqref{eq:conv-Wa}, 
we can pass to the limit along $a_{n_\ell}$ in \eqref{eq:tilde_phi-W}, to obtain
\begin{equation*}
\int_{\R^2\setminus D_R} |(i\nabla + A_{p}) 
(\tilde\Phi - c \beta e^{ i \alpha ( \theta_{p} - \theta_0^{p} ) } \psi_k)|^2 \, dx < + \infty.
\end{equation*}
This implies $c \neq 0$ (and hence $c>0$), otherwise we would have $\int_{\R^2\setminus D_R} 
|(i\nabla +A_{p}) \tilde\Phi |^2 \, dx < + \infty$, which together with 
\eqref{eq:tildePhieq} implies $\tilde\Phi\equiv0$, thus contradicting \eqref{eq:tildePhinorm}.

Then Lemma~\ref{lem:uniqueness} and \eqref{eq:tildePhinorm} provide
\begin{equation*}
\tilde \Phi = c \beta \Psi_{p} 
\quad \text{ and } \quad
c = \frac{1}{|\beta|} \left( \frac{K_\delta}{\int_{\partial D_{K_\delta}}|\Psi_{p}|^2} \right)^{\!\!1/2}.
\end{equation*}
Since these limits depend neither on the sequence, nor on the subsequence, the proof is complete.~\end{proof}

\begin{proof}[Proof of Theorem \ref{thm:eigenvalues}]
Let $p \in \mathbb{S}^1$. 
From Corollary \ref{cor:l0-la_preliminary} part (i) and
\eqref{eq:HabigO} we conclude that $\lambda_0 - \lambda_a = O(|a|^{2 |\alpha - k|})$ as $a=|a|p \to 0$.
Since the function $a \mapsto \lambda_a$ is analytic in a neighborhood of $0$, 
being $\lambda_0$ simple (see \cite[Theorem 1.3]{Lena2015}),
and since  $2 |\alpha - k|$ 
is non-integer, we have that the Taylor polynomials of the function $\lambda_0 - \lambda_a$ 
with center $0$ and  degree less or equal than $\lfloor 2|\alpha-k|\rfloor$ vanish, 
thus yielding the conclusion.
\end{proof}

\begin{proof}[Proof of Theorem \ref{t:bu}]
It is a direct consequence of Lemma \ref{lemma:convergence-blow-up}.
\end{proof}

\section{Rate of convergence for eigenfunctions}\label{sec:rate-conv-eigenf}

Taking inspiration from \cite{AbatangeloFelli2017}, we now estimate the rate of convergence of the eigenfunctions. We then take into 
account the quantity
\[
\left\| (i\nabla+A_a) \varphi_a - e^{i\alpha (\theta_a-\theta_0^a)}
(i\nabla + A_0)\varphi_0 \right\|_{L^2(\Omega,\C)}
\]
and we split the argument in two different steps, the first considering the 
energy variation inside small disks of radius $R|a|$, the second considering 
the energy variation outside these disks.

\begin{Lemma}\label{l:energy_inside}
Under the same assumptions as in Theorems \ref{thm:eigenvalues} and \ref{t:bu}, we have that, for every $p \in {\mathbb S}^1$ and $R > 1$,
\begin{equation}\label{eq:31}
\lim_{ a = |a| p \to 0} \frac{1}{|a|^{2|\alpha-k|}} \int_{D_{R|a|}} 
\left| (i\nabla+A_a) \varphi_a - e^{ i \alpha (\theta_a -  \theta_0^a)} 
(i\nabla+A_0)\varphi_0 \right|^2 \, dx = |\beta|^2 \, \mathcal{F}_p(R)
\end{equation}
where
\[
\mathcal{F}_p(R) = \int_{D_{R}} \left| (i\nabla+A_{p}) \Psi_{p}
- e^{ i \alpha ( \theta_{p} - \theta_0^{p} ) } 
(i\nabla+A_0) \psi_k  \right|^2 \, dx.
\]
Moreover 
\[
\mathfrak{L}_{p}:=\lim_{R \to + \infty} \mathcal{F}_p(R) \in (0,+\infty).
\]
\end{Lemma}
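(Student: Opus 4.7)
The plan is to rescale by $|a|$ and apply Theorem~\ref{t:bu} to identify the limit. Setting $y=x/|a|$, and using the scaling relations $A_a(|a|y)=|a|^{-1}A_p(y)$, $A_0(|a|y)=|a|^{-1}A_0(y)$, together with the angular identities $\theta_a(|a|y)=\theta_p(y)$ and $\theta_0^a(|a|y)=\theta_0^p(y)$ (direct from \eqref{eq:theta-b}--\eqref{eq:theta_0^b} and $a=|a|p$), a chain-rule computation gives
\[
(i\nabla+A_a)\varphi_a(|a|y)
=|a|^{|\alpha-k|-1}(i\nabla+A_p)\Phi_a(y),\qquad
(i\nabla+A_0)\varphi_0(|a|y)=|a|^{|\alpha-k|-1}(i\nabla+A_0)W_a(y),
\]
where $\Phi_a(y):=|a|^{-|\alpha-k|}\varphi_a(|a|y)$ and $W_a$ is defined in~\eqref{eq:def-Wa}. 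Changing variable in the integral and pulling the factor $|a|^{2|\alpha-k|}$ to the left, one obtains
\[
\frac{1}{|a|^{2|\alpha-k|}}\int_{D_{R|a|}}\!\Big|(i\nabla+A_a)\varphi_a-e^{i\alpha(\theta_a-\theta_0^a)}(i\nabla+A_0)\varphi_0\Big|^2\!dx
=\int_{D_R}\!\Big|(i\nabla+A_p)\Phi_a-e^{i\alpha(\theta_p-\theta_0^p)}(i\nabla+A_0)W_a\Big|^2\!dy.
\]
Theorem~\ref{t:bu} yields $\Phi_a\to\beta\Psi_p$ in $H^{1,p}(D_R,\C)$ and \eqref{eq:conv-Wa} yields $W_a\to\beta\psi_k$ in $H^{1,0}(D_R,\C)$; since $|e^{i\alpha(\theta_p-\theta_0^p)}|=1$ a.e., multiplication by this bounded factor preserves $L^2$-convergence. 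Passing to the limit gives $|\beta|^2\mathcal{F}_p(R)$, proving~\eqref{eq:31}.

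For the limit as $R\to+\infty$, observe that $R\mapsto\mathcal{F}_p(R)$ is nondecreasing, so $\mathfrak{L}_p=\sup_{R>1}\mathcal{F}_p(R)\in[0,+\infty]$ exists. On $\R^2\setminus\Gamma_p$ one has $\alpha\nabla(\theta_p-\theta_0^p)=A_p-A_0$, so the gauge identity
\[
(i\nabla+A_p)\bigl(e^{i\alpha(\theta_p-\theta_0^p)}\psi_k\bigr)=e^{i\alpha(\theta_p-\theta_0^p)}(i\nabla+A_0)\psi_k
\]
holds, and therefore
\[
\mathcal{F}_p(R)=\int_{D_R}\Big|(i\nabla+A_p)\bigl(\Psi_p-e^{i\alpha(\theta_p-\theta_0^p)}\psi_k\bigr)\Big|^2dx.
\]
The tail $\int_{D_R\setminus D_1}|\cdot|^2dx$ is uniformly bounded in $R$ by \eqref{eq:Psip2}, while on $D_1$ the integrand is controlled by $|(i\nabla+A_p)\Psi_p|^2\in L^1_{\rm loc}(\R^2)$ together with $|(i\nabla+A_0)\psi_k(x)|=O(|x|^{|\alpha-k|-1})$, which is $L^2$-integrable near~$0$ since $|\alpha-k|>0$. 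Hence $\mathfrak{L}_p<+\infty$.

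The main obstacle is positivity $\mathfrak{L}_p>0$, which I would argue by contradiction. Suppose $\mathfrak{L}_p=0$; then by monotonicity $\mathcal{F}_p(R)=0$ for all $R$, forcing
\[
(i\nabla+A_p)\Psi_p=e^{i\alpha(\theta_p-\theta_0^p)}(i\nabla+A_0)\psi_k\quad\text{a.e.\ in }\R^2.
\]
A direct computation of $(i\nabla+A_0)\psi_k$ in polar coordinates shows that $|(i\nabla+A_0)\psi_k(x)|^2\to(\alpha-k)^2/\pi>0$ as $x\to p$, so the identity would force $|(i\nabla+A_p)\Psi_p(x)|$ to have a strictly positive finite limit at $p$. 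On the other hand, Proposition~\ref{prop:fft} applied to $\Psi_p$ at the pole $p$ provides $j\in\Z$ such that $|(i\nabla+A_p)\Psi_p(x)|$ behaves asymptotically like $|x-p|^{|\alpha-j|-1}$ near $p$; since $\alpha\notin\tfrac12\Z$ the exponent $|\alpha-j|-1$ is nonzero, so this quantity either blows up or vanishes at $p$, contradicting the existence of a positive finite limit. This yields $\mathfrak{L}_p>0$ and completes the proof.
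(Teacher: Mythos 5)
Your proof of \eqref{eq:31} and of the finiteness of $\mathfrak{L}_p$ coincides with the paper's, which disposes of these points in two lines by exactly the same change of variables together with Theorem \ref{t:bu}, \eqref{eq:conv-Wa} and \eqref{eq:Psip2}; the gauge identity you use to rewrite $\mathcal F_p(R)$ as $\int_{D_R}|(i\nabla+A_p)(\Psi_p-e^{i\alpha(\theta_p-\theta_0^p)}\psi_k)|^2\,dx$ is also used there. Where you genuinely diverge is in the positivity of $\mathfrak L_p$. The paper argues: if the integral vanishes, the magnetic Hardy inequality \eqref{eq:anello} on annuli centered at $p$ forces $\Psi_p=e^{i\alpha(\theta_p-\theta_0^p)}\psi_k$ outside $D_1(p)$; a unique continuation principle (Wolff) propagates the identity to all of $\R^2\setminus\Gamma_p$; and this is absurd because $\Psi_p$ is smooth across $\Gamma_p\setminus\{0,p\}$ while $e^{i\alpha(\theta_p-\theta_0^p)}\psi_k$ jumps there. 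You instead read off the contradiction at the pole $p$ itself: the a.e.\ identity forces $|(i\nabla+A_p)\Psi_p|$ to have the positive finite limit $|\alpha-k|/\sqrt{\pi}$ at $p$, which is incompatible with the dichotomy $|(i\nabla+A_p)\Psi_p|\sim c\,|x-p|^{|\alpha-j|-1}$, $c>0$, $|\alpha-j|\neq 1$. This is a legitimate and arguably more elementary route, since it avoids unique continuation entirely and uses only local information at $p$. Two small caveats you should patch: Proposition \ref{prop:fft} as stated only gives the $C^{1,\tau}$ angular asymptotics of $\Psi_p$ itself, not of its magnetic gradient, so to get the asymptotics of $|(i\nabla+A_p)\Psi_p|$ with a nonzero leading coefficient you should invoke the gradient expansion (as in Lemma \ref{lemma:stimbe}(v); for $\lambda=0$ the Fourier series near $p$ is exactly $\sum_j c_j r^{|\alpha-j|}e^{ijt}$, which makes this immediate); and Proposition \ref{prop:fft} is stated for Dirichlet solutions in $H^{1,b}_0(\Omega,\C)$, whereas $\Psi_p$ is only a local $H^{1,p}$ solution near $p$, so you need the (standard) local version of that asymptotic result. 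Both points are routine to fix and do not affect the correctness of the argument.
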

\begin{proof}
We notice that, in view of \eqref{eq:Psip2}, $\mathfrak{L}_p<+\infty$.
The proof of \eqref{eq:31} relies on a change of variables and on the convergences stated in 
\eqref{eq:conv-Wa} and in Theorem \ref{t:bu}. We have that 
\begin{align*}
 \lim_{R\to+\infty} \mathcal{F}_p(R)&= 
\int_{\R^2} \left| (i\nabla+A_{p}) \Psi_{p}
- e^{ i \alpha ( \theta_{p} - \theta_0^{p} ) } 
(i\nabla+A_0) \psi_k \right|^2 \, dx\\
&=
\int_{\R^2 \setminus \Gamma_{p}} 
 \left|(i\nabla+A_{p}) (\Psi_{p}
 - e^{i\alpha(\theta_{p}-\theta_0^{p})}\psi_k)\right|^2 \,dx>0,
\end{align*}
where $\Gamma_p$ is defined in \eqref{eq:gamma-b}.
Indeed, suppose by contradiction that the above limit is zero. 
Since, for every $r_1>r_2>1$,
$\Psi_{p} - e^{i\alpha(\theta_{p}-\theta_0^{p})}\psi_k\in
 H^{1,p}(D_{r_1}(p)\setminus D_{r_2}(p),\C)$,
 the Hardy inequality \eqref{eq:anello} implies that $\Psi_{p} -
 e^{i\alpha(\theta_{p}-\theta_0^{p})}\psi_k\equiv 0$ in $\R^2\setminus
 D_1(p)$. Moreover, since $(i\nabla+A_{p})^2\big( \Psi_{p} 
- e^{ i \alpha ( \theta_{p} - \theta_0^{p} )}\psi_k\big)=~0$ in
$\R^2\setminus \Gamma_p$, a classical unique continuation principle
(see e.g. \cite{wolff}) implies that $\Psi_{p} -
 e^{i\alpha(\theta_{p}-\theta_0^{p})}\psi_k\equiv 0$ in $\R^2\setminus
 \Gamma_p$  necessarily.
But this is impossible since, by \eqref{eq:Psip1}
and classical elliptic estimates away from $p$, 
$\Psi_p$ is smooth in $\R^2\setminus \{p\}$, whereas $e^{ i \alpha (  \theta_{p}- \theta_0^{p} )} \psi_k$ 
is discontinuous on $\Gamma_{p}\setminus\{0\}$ since it is the product of the continuous 
non-zero function $\psi_k$ and of the discontinuous function 
$e^{ i \alpha (  \theta_{p}- \theta_0^{p} )}$ 
(see the definitions \eqref{eq:theta-b}, \eqref{eq:theta_0^b} and \eqref{eq:gamma-b}).
\end{proof}

Before addressing the energy variation outside the disk, 
it is worthwhile introducing a preliminary result. For all $R > 2$ and 
$p \in \mathbb{S}^1$, let $z_{p,R}$ be the unique solution to 
\begin{equation}\label{eq:zpReq}
\begin{cases}
(i \nabla + A_0)^2 z_{p,R} = 0, & \text{in } D_R, \\
z_{p,R} = e^{i \alpha (\theta_0^{p} - \theta_{p})} \Psi_{p}, & \text{on } \partial D_R.
\end{cases}
\end{equation}
From Lemma~\ref{lemma:convergence-blow-up} it follows that the family of functions 
$Z_a^R$ introduced in \eqref{eq:ZaR} converges in $H^{1,0}(D_R,\mathbb{C})$ to some 
multiple of $z_{p,R}$.

\begin{Lemma}\label{l:blowZ}
Let $p \in {\mathbb S}^1$ and $R>2$. For $a =|a|p \in \Omega$, let $Z_a^R$ be as in \eqref{eq:ZaR}. Then
\[
Z_a^R \to \frac{\beta}{|\beta|} \left(\frac{K_\delta}{\int_{\partial D_{K_\delta}} 
|\Psi_{p}|^2 ds } \right)^{\!\!1/2} z_{p,R},
\]
in $H^{1,0}(D_R,\mathbb{C})$, as $|a|\to0^+$.
\end{Lemma}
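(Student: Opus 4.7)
The strategy is to identify $Z_a^R$ as the solution of a Dirichlet problem on $D_R$ whose boundary datum converges to that of $c_p z_{p,R}$, where
\[
c_p := \frac{\beta}{|\beta|}\left(\frac{K_\delta}{\int_{\partial D_{K_\delta}}|\Psi_p|^2\,ds}\right)^{\!\!1/2}.
\]
Using the scaling identities $\theta_a(|a|x)=\theta_p(x)$ and $\theta_0^a(|a|x)=\theta_0^p(x)$ (valid since $a=|a|p$) together with the definition \eqref{eq:ZaR} and the system \eqref{eq:vextinteq}, one verifies directly that $Z_a^R\in H^{1,0}(D_R,\C)$ satisfies
\[
(i\nabla+A_0)^2 Z_a^R = 0\text{ in }D_R,\qquad Z_a^R = e^{-i\alpha(\theta_p-\theta_0^p)}\tilde\varphi_a\text{ on }\partial D_R.
\]
By Lemma \ref{lemma:convergence-blow-up} the trace $\tilde\varphi_a|_{\partial D_R}$ converges to $c_p\Psi_p|_{\partial D_R}$ in $H^{1/2}(\partial D_R,\C)$; since $R>2$ ensures $\partial D_R\cap(\Gamma_p\cup\{0\})=\emptyset$, the gauge factor $e^{-i\alpha(\theta_p-\theta_0^p)}$ is smooth in a neighbourhood of $\partial D_R$, so multiplying by it preserves this trace convergence, and by \eqref{eq:zpReq} the limit coincides with the trace of $c_p z_{p,R}$.

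Setting $w_a := Z_a^R - c_p z_{p,R}\in H^{1,0}(D_R,\C)$, this difference solves $(i\nabla+A_0)^2 w_a=0$ in $D_R$ with boundary trace $e^{-i\alpha(\theta_p-\theta_0^p)}(\tilde\varphi_a - c_p\Psi_p)$ and is therefore the minimiser of $u\mapsto\int_{D_R}|(i\nabla+A_0)u|^2\,dx$ among $u\in H^{1,0}(D_R,\C)$ with the same trace. As competitor I would take
\[
U_a := \chi\cdot e^{-i\alpha(\theta_p-\theta_0^p)}(\tilde\varphi_a-c_p\Psi_p),
\]
where $\chi\in C^\infty(D_R)$ is a cut-off with $\chi\equiv 0$ on $D_{R/2}\supset\Gamma_p\cup\{0\}$ and $\chi\equiv 1$ near $\partial D_R$. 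The support of $U_a$ avoids both the singularity of the $H^{1,0}$-norm at $0$ and the discontinuity locus $\Gamma_p$ of the gauge factor, so $U_a\in H^{1,0}(D_R,\C)$ with the correct boundary trace. Applying the gauge identity $(i\nabla+A_0)(e^{-i\alpha(\theta_p-\theta_0^p)}u)=e^{-i\alpha(\theta_p-\theta_0^p)}(i\nabla+A_p)u$ on $\mathrm{supp\,}\chi$ together with the product rule yields
\[
\int_{D_R}|(i\nabla+A_0)w_a|^2\,dx \leq 2\int_{D_R}|(i\nabla+A_p)(\tilde\varphi_a-c_p\Psi_p)|^2\,dx + 2\|\nabla\chi\|_\infty^2\int_{D_R}|\tilde\varphi_a-c_p\Psi_p|^2\,dx\longrightarrow 0,
\]
by Lemma \ref{lemma:convergence-blow-up}. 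The Hardy inequality \eqref{eq:hardy} applied to $w_a$ then gives $\int_{D_R}|w_a|^2/|x|^2\,dx\leq \mu_1^{-1}\int_{D_R}|(i\nabla+A_0)w_a|^2\,dx\to 0$, from which $\|w_a\|_{H^{1,0}(D_R,\C)}\to 0$ follows (controlling $\|w_a\|_{L^2(D_R,\C)}$ by $R\|w_a/|x|\|_{L^2(D_R,\C)}$ and $\|\nabla w_a\|_{L^2(D_R,\C^2)}$ via the triangle inequality applied to $i\nabla w_a = (i\nabla+A_0)w_a - A_0 w_a$).

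The principal obstacle is the construction of the competitor: a naive choice $e^{-i\alpha(\theta_p-\theta_0^p)}(\tilde\varphi_a-c_p\Psi_p)$ would fail to lie in $H^{1,0}(D_R,\C)$, both because of the singularity at $0$ and because multiplication by the discontinuous gauge factor generates distributional Dirac contributions along $\Gamma_p$. The cut-off $\chi$ circumvents both difficulties simultaneously by confining the competitor to the annulus $D_R\setminus D_{R/2}$, where the gauge factor is smooth and the pointwise isometry between $(i\nabla+A_0)$ and $(i\nabla+A_p)$ applies.
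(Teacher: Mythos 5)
Your proposal is correct and follows essentially the same route as the paper's proof: identify $Z_a^R-c_p z_{p,R}$ as the energy minimiser for its boundary trace, test the Dirichlet principle with the cut-off competitor $\chi\, e^{i\alpha(\theta_0^p-\theta_p)}(\tilde\varphi_a-c_p\Psi_p)$ supported away from $\Gamma_p$ and the origin, invoke Lemma \ref{lemma:convergence-blow-up} to make the competitor's energy vanish, and conclude with the Hardy inequality. Your version merely spells out a few steps the paper leaves implicit (admissibility of the competitor and the passage from the magnetic energy to the full $H^{1,0}$-norm).
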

\begin{proof}
Denote
$
\gamma_{p,\delta}=
\frac{\beta}{|\beta|}\Big(\frac{K_\delta}{\int_{\partial D_{K_\delta}} |\Psi_{p}|^2 ds } \Big)^{\!1/2}$.
By \eqref{eq:vextinteq} and \eqref{eq:zpReq} we have that 
$Z_a^R - \gamma_{p,\delta}z_{p,R}$ solves
\begin{equation*}
\begin{cases}
(i \nabla + A_0)^2 ( Z_a^R - \gamma_{p,\delta}z_{p,R}) 
= 0, & \text{in } D_R \\
Z_a^R - \gamma_{p,\delta}z_{p,R} 
= e^{i \alpha ( \theta_0^{p} - \theta_{p} ) } 
(\tilde{\varphi}_a - \gamma_{p,\delta}\Psi_{p}) ,
 & \text{on } \partial D_R.
\end{cases}
\end{equation*}
For $R > 2$, let $\eta_R \, : \, \R^2 \to \R$ be a smooth cut-off function such that 
\begin{equation}\label{eq:eta_def}
\eta_R \equiv 0 \textrm{ in }D_{R/2}, \quad \eta_R \equiv 1 \textrm{ in }\R^2 \setminus D_{R}, \quad 0 \leq \eta_R  \leq 1.
\end{equation}
Then, by the Dirichlet principle and  Lemma~\ref{lemma:convergence-blow-up},
\begin{align*}
& \int_{D_R} |(i\nabla+A_0)( Z_a^R - \gamma_{p,\delta}z_{p,R} ) |^2 \, dx 
\leq  \int_{D_R} |(i\nabla+A_0) ( \eta_R e^{ i \alpha ( \theta_0^{p} - \theta_{p} ) } 
( \tilde{\varphi}_a - \gamma_{p,\delta} \Psi_{p} ) ) |^2 \,dx \\
& \leq 2 \int_{D_R} | \nabla \eta_R |^2 | \tilde{\varphi}_a - \gamma_{p,\delta} \Psi_{p} |^2 \, dx 
+ 2 \int_{D_R \setminus D_{R/2}} \eta_R^2 | (i\nabla+A_{p})
(\tilde{\varphi}_a - \gamma_{p,\delta}\Psi_{p})|^2 \, dx =o(1),
\end{align*}
as $a=|a|p\to0$. 
Finally, the Hardy type inequality \eqref{eq:hardy} allows us to conclude.
\end{proof}

\begin{Lemma}\label{l:energy_outside}
  Let $\varphi_0 \in H^{1,0}_{0}(\Omega,\C)$ be a solution to
  \eqref{eq:equation_lambda0} satisfying \eqref{eq:1}.  Let
  $p \in \mathbb{S}^1$. For $a = |a| p \in \Omega$, let
  $\varphi_a \in H^{1,a}_{0}(\Omega,\C)$ satisfy
  \eqref{eq:equation_a}--\eqref{eq:normalization}. Then, for all
  $R>\max\{2,K_\delta\}$,
\begin{equation*}
\| e^{i\alpha ( \theta_0^a - \theta_a ) } (i\nabla+A_a) \varphi_a - 
(i\nabla+A_0)\varphi_0 \|^2_{L^2(\Omega\setminus D_{R|a|},\C)} \leq |a|^{2|\alpha-k|} G(p,a,R),
\end{equation*}
where
$\lim_{a=|a|p\to0}G(p,a,R)=G(p,R)$ for some $G(p,R)$ such that 
\begin{equation}\label{eq:33}
\lim_{R\to+\infty}G(p,R)=0.
\end{equation}
\end{Lemma}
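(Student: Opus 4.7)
The plan is to rescale the integral along a blow-up of ratio $|a|$, use the gauge identity to put it in a form amenable to the convergence statements of Theorems~\ref{t:bu} and \eqref{eq:conv-Wa}, and then conclude by invoking the decay encoded in \eqref{eq:Psip2}.

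First, for $R>1$ the segment $\Gamma_a$ lies inside $D_{R|a|}$, so $\theta_0^a-\theta_a$ is smooth on $\Omega\setminus D_{R|a|}$ and the gauge identity
\[
e^{i\alpha(\theta_0^a-\theta_a)}(i\nabla+A_a)\varphi_a=(i\nabla+A_0)\bigl(e^{i\alpha(\theta_0^a-\theta_a)}\varphi_a\bigr)
\]
holds pointwise there. After the substitution $x=|a|y$, using the translation invariances $\theta_a(|a|y)=\theta_p(y)$ and $\theta_0^a(|a|y)=\theta_0^p(y)$, the quantity on the left of the statement equals $|a|^{2|\alpha-k|}G(p,a,R)$ with
\[
G(p,a,R):=\int_{\Omega/|a|\setminus D_R}\bigl|e^{i\alpha(\theta_0^p-\theta_p)}(i\nabla+A_p)V_a-(i\nabla+A_0)W_a\bigr|^2dy,
\]
where $V_a(y):=\varphi_a(|a|y)/|a|^{|\alpha-k|}$ and $W_a$ is as in \eqref{eq:def-Wa}.

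Next I would identify $\lim_{a=|a|p\to 0}G(p,a,R)$. Applying the gauge identity on $\R^2\setminus D_R$ (which for $R>1$ avoids $\Gamma_p$), the integrand can be rewritten as $\bigl|(i\nabla+A_p)(V_a-e^{i\alpha(\theta_p-\theta_0^p)}W_a)\bigr|^2$. Corollary~\ref{cor:improved}(ii), together with \eqref{eq:HabigO} and the definitions of $V_a$ and $W_a$, then yields that $G(p,a,R)=O(1)$ as $a=|a|p\to 0$. On the other hand, by Theorem~\ref{t:bu}, $V_a\to\beta\Psi_p$ in $H^{1,p}(D_{R'},\C)$ for every $R'>1$, and by \eqref{eq:conv-Wa}, $W_a\to\beta\psi_k$ in $H^{1,0}(D_{R'},\C)$. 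Combining these strong local convergences with the uniform tail estimate through a standard exhaustion argument—fix a large $R'>R$, pass to the limit on the bounded annulus $D_{R'}\setminus D_R$, and then let $R'\to+\infty$ using the uniform bound together with Fatou's lemma—one obtains
\[
G(p,R):=\lim_{a=|a|p\to 0}G(p,a,R)=|\beta|^2\int_{\R^2\setminus D_R}\bigl|(i\nabla+A_p)(\Psi_p-e^{i\alpha(\theta_p-\theta_0^p)}\psi_k)\bigr|^2dy.
\]

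Finally, \eqref{eq:33} is immediate from \eqref{eq:Psip2}: since $(i\nabla+A_p)(\Psi_p-e^{i\alpha(\theta_p-\theta_0^p)}\psi_k)$ is square-integrable on $\R^2\setminus D_1$, dominated convergence gives $G(p,R)\to 0$ as $R\to+\infty$. The hard part will be the second paragraph: justifying the $a\to 0$ limit in an integral whose domain is unbounded and $a$-dependent. The reformulation of the integrand via the gauge identity is essential here, because it is precisely in this form that Corollary~\ref{cor:improved}(ii) furnishes a uniform-in-$a$ tail bound capable of making the exhaustion argument work.
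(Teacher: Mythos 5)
There is a genuine gap at the step you yourself flag as ``the hard part.'' Your rescaling and gauge manipulations in the first paragraph are correct, and the claimed value of $G(p,R)$ is the right one (it is consistent with Lemma~\ref{l:energy_inside} and Theorem~\ref{thm:eigenfunctions}). But the exhaustion argument does not close. Strong convergence on each bounded annulus $D_{R'}\setminus D_R$ plus Fatou only yields
\[
\liminf_{a=|a|p\to 0} G(p,a,R)\;\geq\; |\beta|^2\int_{\R^2\setminus D_R}\bigl|(i\nabla+A_p)(\Psi_p-e^{i\alpha(\theta_p-\theta_0^p)}\psi_k)\bigr|^2\,dy,
\]
which is the wrong direction: the lemma is an \emph{upper} bound, so what you must exclude is energy escaping to spatial infinity in the rescaled variables, i.e.\ you need $\limsup_{a}\int_{\Omega/|a|\setminus D_{R'}}|\cdots|^2\to 0$ as $R'\to+\infty$. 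Corollary~\ref{cor:improved}(ii) gives only that this tail is $O(1)$ for each fixed $R'$, not that it is uniformly small for large $R'$; and uniform smallness of the tail is essentially the content of the lemma itself, so invoking it here would be circular. As stated, your argument is compatible with $\lim_a G(p,a,R)=|\beta|^2\int_{\R^2\setminus D_R}|\cdots|^2+m$ for some escaped mass $m>0$, which would destroy \eqref{eq:33}.

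The paper closes this gap by an entirely different mechanism: it bounds the left-hand side by $\|v_{n_0,R,a}-\varphi_0\|_{H^{1,0}_0(\Omega,\C)}^2$ and invokes Theorem~\ref{thm:energy_estimates}, i.e.\ the invertibility of the linearization $dF(\lambda_0,\varphi_0)$, which converts the global (in $\Omega$) energy of the difference into a single boundary flux term $h(p,a,R)$ on $\partial D_R$. That flux converges, by Lemmas~\ref{lemma:convergence-blow-up} and~\ref{l:blowZ}, to a quantity involving only $\Psi_p-e^{i\alpha(\theta_p-\theta_0^p)}\psi_k$ and $\psi_k-z_{p,R}$, whose decay as $R\to+\infty$ is then extracted from \eqref{eq:Psip2} via the Dirichlet principle and the Hardy inequality \eqref{eq:anello}. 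It is precisely this functional-analytic reduction to $\partial D_R$ that supplies the uniform-in-$a$ control at infinity that your direct exhaustion argument is missing; you would need to either adopt that route or supply an independent proof of the uniform tail estimate.
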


\begin{proof}
Let $R>\max\{2,K_\delta\}$. From Theorem \ref{thm:energy_estimates} and
\eqref{eq:HabigO} we have that 
\begin{align*}
\| e^{i\alpha ( \theta_0^a - \theta_a ) } &(i\nabla+A_a) \varphi_a - 
(i\nabla+A_0)\varphi_0 \|_{L^2(\Omega\setminus D_{R|a|},\C)}
\leq
\| v_{n_0,R,a} - \varphi_0\|_{H^{1,0}_0(\Omega,\C)} \\
&\leq C\Big(h(p,a,R)+g(p,a,R)\Big) |a|^{|\alpha-k|}
\end{align*}
where $g(p,a,R)=o(1)$ as $|a|\to0^+$ and
\begin{align*}
&h(p,a,R)
=\sup_{\substack{\varphi\in \mathcal D^{1,2}_0(\R^2,\C)\\
      \|\varphi\|_{\mathcal D^{1,2}_0(\R^2,\C)}=1}} \bigg|\int_{\partial D_{R}}
(i\nabla+A_0) \left(e^{i\alpha (\theta_0^p-\theta_p)} \tilde\varphi_a- Z_a^R\right)\cdot
  \nu\,\overline{\varphi}\,d\sigma\bigg|\\
&\leq {\rm const\,}\left\|(i\nabla+A_0) \left(e^{i\alpha (\theta_0^p-\theta_p)} \tilde\varphi_a- Z_a^R\right)\!\cdot
\!  \nu-
\gamma_{p,\delta} (i\nabla+A_0) \left(e^{i\alpha (\theta_0^p-\theta_p)}\Psi_p- z_{p,R}\right)\!\cdot\!\nu
\right\|_{H^{-\frac12}(\partial D_R)}\\
&+\gamma_{p,\delta}
\sup_{\substack{\varphi\in \mathcal D^{1,2}_0(\R^2,\C)\\
      \|\varphi\|_{\mathcal D^{1,2}_0(\R^2,\C)}=1}} \bigg|\int_{\partial D_{R}}
(i\nabla+A_0) \left(e^{i\alpha (\theta_0^p-\theta_p)} \Psi_p- z_{p,R}\right)\cdot
  \nu\,\overline{\varphi}\,d\sigma\bigg|
\end{align*}
where $\gamma_{p,\delta}=
\frac{\beta}{|\beta|}\Big(\frac{K_\delta}{\int_{\partial D_{K_\delta}}
  |\Psi_{p}|^2 ds } \Big)^{\!1/2}$ and the constant ${\rm const}>0$ is
independent of $a$.
From Lemmas \ref{lemma:convergence-blow-up} and \ref{l:blowZ}  we have that 
\[
(i\nabla+A_0) \left(e^{i\alpha (\theta_0^p-\theta_p)} \tilde\varphi_a- Z_a^R\right)\cdot
  \nu\to 
\gamma_{p,\delta} (i\nabla+A_0) \left(e^{i\alpha (\theta_0^p-\theta_p)}\Psi_p- z_{p,R}\right)\cdot\nu
\]
in $H^{-1/2}(\partial D_R)$ as $a=|a|p\to0$. Therefore
$h(p,a,R)\leq f(p,a,R)$
with 
\[
\lim_{a=|a|p\to0}f(p,a,R)=\gamma_{p,\delta}
\sup_{\substack{\varphi\in \mathcal D^{1,2}_0(\R^2,\C)\\
      \|\varphi\|_{\mathcal D^{1,2}_0(\R^2,\C)}=1}} \bigg|\int_{\partial D_{R}}
(i\nabla+A_0) \left(e^{i\alpha (\theta_0^p-\theta_p)} \Psi_p- z_{p,R}\right)\cdot
  \nu\,\overline{\varphi}\,d\sigma\bigg|.
\]
To complete the proof is then enough to show that 
\begin{equation}\label{eq:32}
\lim_{R\to+\infty}
\sup_{\substack{\varphi\in \mathcal D^{1,2}_0(\R^2,\C)\\
      \|\varphi\|_{\mathcal D^{1,2}_0(\R^2,\C)}=1}} \bigg|\int_{\partial D_{R}}
(i\nabla+A_0) \left(e^{i\alpha (\theta_0^p-\theta_p)} \Psi_p- z_{p,R}\right)\cdot
  \nu\,\overline{\varphi}\,d\sigma\bigg|=0.
\end{equation}
Using an integration by parts we can rewrite
\begin{multline*}
\bigg|\int_{\partial D_{R}}
(i\nabla+A_0) \left(e^{i\alpha (\theta_0^p-\theta_p)} \Psi_p- z_{p,R}\right)\cdot
  \nu\,\overline{\varphi}\,d\sigma\bigg| \\
= \bigg| \int_{\partial D_R} e^{i\alpha(\theta_0^p-\theta_p)}(i\nabla+A_p)(\Psi_p-e^{i\alpha(\theta_p-\theta_0^p)}\psi_k)\cdot\nu\overline{\varphi}\,d\sigma 
+\int_{\partial D_R}(i\nabla+A_0)(\psi_k-z_{p,R})\cdot\nu \overline{\varphi}\,d\sigma\bigg|\\
=\bigg|-i\int_{\R^2\setminus D_R} (i\nabla+A_p)(\Psi_p-e^{i\alpha(\theta_p-\theta_0^p)}\psi_k)\cdot\overline{(i\nabla+A_0)\varphi}e^{i\alpha(\theta_0^p-\theta_p)} \,dx \\
+i\int_{D_R} (i\nabla+A_0)(\psi_k-z_{p,R})\cdot\overline{(i\nabla+A_0)\varphi}\,dx\bigg|,
\end{multline*}
which implies
\begin{multline}\label{eq:sup_estimate}
\sup_{\substack{\varphi\in \mathcal D^{1,2}_0(\R^2,\C)\\
      \|\varphi\|_{\mathcal D^{1,2}_0(\R^2,\C)}=1}} \bigg|\int_{\partial D_{R}}
(i\nabla+A_0) \left(e^{i\alpha (\theta_0^p-\theta_p)} \Psi_p- z_{p,R}\right)\cdot
  \nu\,\overline{\varphi}\,d\sigma\bigg| \\\leq 
  \left(\int_{\R^2\setminus D_R} |(i\nabla+A_p)(\Psi_p-e^{i\alpha(\theta_p-\theta_0^p)}\psi_k)|^2\,dx\right)^{1/2}
  +\left(\int_{D_R}|(i\nabla+A_0)(\psi_k-z_{p,R})|^2 \,dx\right)^{1/2}.
\end{multline}
The first term in the right hand side of \eqref{eq:sup_estimate} goes
to zero as $R\to+\infty$ because of \eqref{eq:Psip2}.
 To estimate the
second term, we consider a test function $\eta_R$ satisfying
\eqref{eq:eta_def} and the additional property
$|\nabla\eta_R|\leq 4/R$ in $D_R\setminus D_{R/2}$. Recalling that
$\psi_k-z_{p,R}$ satisfies 
$(i\nabla+A_0)^2(\psi_k-z_{p,R})=0$ in $D_R$ with the boundary condition
$\psi_k-z_{p,R}=\psi_k-e^{i\alpha(\theta_0^p-\theta_p)}\Psi_p$ on
$\partial D_R$, the Dirichlet principle and the Hardy inequality
\eqref{eq:anello} provide
\begin{align*}
\int_{D_R} &|(i\nabla+A_0)(\psi_k-z_{p,R})|^2\,dx \leq 
\int_{D_R} |(i\nabla+A_0)(\eta_R(\psi_k-e^{i\alpha(\theta_0^p-\theta_p)}\Psi_p))|^2\,dx  \\
&\leq 2\int_{D_R} |\nabla\eta_R|^2|\psi_k-e^{i\alpha(\theta_0^p-\theta_p)}\Psi_p|^2\,dx
+2\int_{\R^2\setminus D_{R/2}} |(i\nabla+A_0)(\psi_k-e^{i\alpha(\theta_0^p-\theta_p)}\Psi_p)|^2\,dx\\
&\leq \frac{32}{R^2} \int_{D_R\setminus D_{R/2}} |\Psi_p-e^{i\alpha(\theta_p-\theta_0^p)}\psi_k|^2\,dx +2\int_{\R^2\setminus D_{R/2}} |(i\nabla+A_p)(\Psi_p-e^{i\alpha(\theta_p-\theta_0^p)}\psi_k)|^2\,dx\\
&\leq \frac{32(R+1)^2}{R^2} \int_{D_{R+1}(p)\setminus D_{\frac{R-2}2}(p)}
\frac{|\Psi_p-e^{i\alpha(\theta_p-\theta_0^p)}\psi_k|^2}
{|x-p|^2}\,dx \\
&\quad\qquad+2\int_{\R^2\setminus D_{R/2}}
  |(i\nabla+A_p)(\Psi_p-e^{i\alpha(\theta_p-\theta_0^p)}\psi_k)|^2\,dx\\
&\leq  \frac{32(R+1)^2}{R^2\mu_1} \int_{D_{R+1}(p)\setminus D_{\frac{R-2}2}(p)}|(i\nabla+A_p)(\Psi_p-e^{i\alpha(\theta_p-\theta_0^p)}\psi_k)|^2\,dx \\
&\quad\qquad+2\int_{\R^2\setminus D_{R/2}}
  |(i\nabla+A_p)(\Psi_p-e^{i\alpha(\theta_p-\theta_0^p)}\psi_k)|^2\,dx
\end{align*}
which goes to zero again thanks to \eqref{eq:Psip2}. Therefore we have obtained \eqref{eq:32} and the proof is complete.
\end{proof}

\begin{proof}[Proof of Theorem \ref{thm:eigenfunctions}]
Let $p\in {\mathbb S}^1$ and $\eps>0$. From Lemma
\ref{l:energy_inside} and \eqref{eq:33} there exists
some 
$R_0>\max\{2,K_\delta\}$ sufficiently large such that 
\[
|{\mathcal F}_p(R_0)-{\mathfrak L}_p|<\eps\quad\text{and}\quad 
|G(p,R_0)|<\eps.
\]
Moreover, again from Lemmas \ref{l:energy_outside} and \ref{l:energy_inside} there exists
$\rho>0$ (depending on $p$, $\eps$, and $R_0$) such that, if
$a=|a|p$ and $|a|<\rho$, then 
\[
|G(p,a,R_0)-G(p,R_0)|<\eps
\]
and 
\[
\left|\frac1{|a|^{2|\alpha-k|}}\int_{D_{R_0|a|}}\left|
(i\nabla+A_a)\varphi_a(x)-e^{i\alpha(\theta_a-\theta_0^a)(x)}(i\nabla+A_0)\varphi_0(x)
\right|^2\,dx-|\beta|^2{\mathcal F}_p(R_0)\right|<\eps.
\]
Therefore, taking into account Lemma \ref{l:energy_outside},
 we have that, for all $a=|a|p$ with $|a|<\rho$, 
\begin{align*}
&  \bigg||a|^{-2|\alpha-k|}\int_\Omega \left|
 (i\nabla+A_a)\varphi_a-e^{i\alpha(\theta_a-\theta_0^a)}(i\nabla+A_0)\varphi_0\right|^2\,dx-|\beta|^2{\mathfrak
  L}_p\bigg|\\
&\leq\bigg||a|^{-2|\alpha-k|}\int_{D_{R_0|a|}} \left|
(i\nabla+A_a)\varphi_a-e^{i\alpha(\theta_a-\theta_0^a)}(i\nabla+A_0)\varphi_0\right|^2\,dx-|\beta|^2 {\mathcal F}_p(R_0)\bigg|\\
&\quad +|a|^{-2|\alpha-k|}\int_{\Omega\setminus D_{R_0|a|}} \left|
 (i\nabla+A_a)\varphi_a-e^{i\alpha(\theta_a-\theta_0^a)}(i\nabla+A_0)\varphi_0\right|^2\,dx+|\beta|^2|{\mathfrak
  L}_p-{\mathcal F}_p(R_0)|\\
&<\eps+G(p,a,R_0)+|\beta|^2\eps\leq
  \eps+|G(p,a,R_0)-G(p,R_0)|+|G(p,R_0)|+|\beta|^2\eps =(3+|\beta|^2)\eps,
\end{align*}
thus concluding the proof.
\end{proof}

\bigbreak

\bigskip\noindent {\bf Acknowledgments.}  The authors are
partially supported by the project ERC Advanced Grant
2013 n. 339958 : ``Complex Patterns for Strongly Interacting Dynamical
Systems -- COMPAT''.  They also acknowledge the support of the
projects MIS F.4508.14 (FNRS) \& ARC AUWB-2012-12/17-ULB1- IAPAS for a
research visit at Universit\'e Libre de Bruxelles, where part of this
work has been achieved.  L. Abatangelo and V. Felli are partially
supported by the PRIN2015 grant ``Variational methods, with
applications to problems in mathematical physics and geometry''
and by the 2017-GNAMPA project ``Stabilità e analisi
  spettrale per problemi alle derivate parziali''.  Finally, the
authors would like to thank Susanna Terracini for her encouragement
and for fruitful discussions.


\end{document}